\definecolor{strcolor}{rgb}{0.6, 0.2, 0.6}
\definecolor{commentcolor}{rgb}{0.3125, 0.5, 0.3125}
\definecolor{keycol}{rgb}{0, 0, 1}
\pgfplotsset{compat = newest}
\newcommand{\tr}[1]{\ensuremath{{#1}^\text{T}}}
\newcommand{\uset}[2]{\ensuremath{\underset{#1}{#2}}}
\DeclarePairedDelimiter\abs{\lvert}{\rvert}%
\newcommand{\Q}{\mathcal{Q}}
\newcommand{\R}{\mathbb{R}}
\newcommand{\bfx}{\mathbf{x}}
\newcommand{\bfQ}{\mathbf{Q}}
\newcommand{\bfr}{\mathbf{r}}
\newcommand{\bfb}{\mathbf{b}}
\newcommand{\bfW}{\mathbf{W}}
\newcommand{\bfLambda}{\boldsymbol\Lambda}
\newcommand{\bfP}{\mathbf{P}}
\newcommand{\bfY}{\mathbf{Y}}
\newcommand{\bfz}{\mathbf{z}}
\newcommand{\bfB}{\mathbf{B}}
\newcommand{\bfc}{\mathbf{c}}
\newcommand{\bfd}{\mathbf{d}}
\newcommand{\bfM}{\mathbf{M}}
\newcommand{\bfzero}{\mathbf{0}}
\newcommand{\bfpi}{\boldsymbol\pi}
\newcommand{\bff}{\mathbf{f}}
\newcommand{\bfg}{\mathbf{g}}
\newcommand{\bftheta}{\boldsymbol\theta}
\newcommand{\bfa}{\mathbf{a}}
\newcommand{\bfZ}{\mathbf{Z}}
\newcommand{\pmc}{piecewise McCormick}
\newcommand{\pp}{partitioning points}
\newcommand{\inp}[2]{\langle{#1},{#2}\rangle}
\newcommand{\setSupplementStyle}{
  \renewcommand{\thesubsection}{S.\arabic{subsection}}
  \renewcommand{\thesubsubsection}{\thesubsection.\arabic{subsubsection}}
}
\tikzset{>=latex} % for LaTeX arrow head
\colorlet{myred}{red!80!black}
\colorlet{myblue}{blue!80!black}
\colorlet{mygreen}{green!60!black}
\colorlet{myorange}{orange!70!red!60!black}
\colorlet{mydarkred}{red!30!black}
\colorlet{mydarkblue}{blue!40!black}
\colorlet{mydarkgreen}{green!30!black}
\tikzstyle{node}=[thick,circle,draw=myblue,minimum size=22,inner sep=0.5,outer sep=0.6]
\tikzstyle{node in}=[node,green!20!black,draw=mygreen!30!black,fill=mygreen!25]
\tikzstyle{node hidden}=[node,blue!20!black,draw=myblue!30!black,fill=myblue!20]
\tikzstyle{node convol}=[node,orange!20!black,draw=myorange!30!black,fill=myorange!20]
\tikzstyle{node out}=[node,red!20!black,draw=myred!30!black,fill=myred!20]
\tikzstyle{connect}=[thick,mydarkblue] %,line cap=round
\tikzstyle{connect arrow}=[-{Latex[length=4,width=3.5]},thick,mydarkblue,shorten <=0.5,shorten >=1]
\tikzset{ % node styles, numbered for easy mapping with \nstyle
  node 1/.style={node in},
  node 2/.style={node hidden},
  node 3/.style={node out},
}
\def\blot{\quad \mbox{$\vcenter{ \vbox{ \hrule height.4pt
				\hbox{\vrule width.4pt height.9ex \kern.9ex \vrule width.4pt}
				\hrule height.4pt}}$}}
\gdef\AQ#1{}
\gdef\CQ#1{}
\begin{document}
	%%%%%%%%%%%%%%%%
	
%	\AIA
% \setcounter{page}{1} %
% \VOL{00}%
% \NO{0}%
% \MONTH{Xxxxx}%
% \YEAR{2017}%
% \FIRSTPAGE{1}%
% \LASTPAGE{16}%
% \FIRSTPAGEAIA{1}%
% \LASTPAGEAIA{16}%
% \def\COPYRIGHTHOLDER{INFORMS}%
% \def\COPYRIGHTYEAR{2017}%
% \def\DOI{\fontsize{7.5}{9.5}\selectfont\sf\bfseries\noindent https://doi.org/10.1287/opre.2017.1714\CQ{Word count = 9740}}
%\def\RECEIVED{November 1, 2016}
%\def\REVISED{June 22, 2017; October 6, 2017}
%\def\ACCEPTED{November 15, 2017}
% \PUBONLINEAIA{}

	\RUNAUTHOR{Kannan, Nagarajan, and Deka} %

	\RUNTITLE{ML for Solving Nonconvex QCQPs to Global Optimality}

\TITLE{{Strong Partitioning and a Machine Learning Approximation for Accelerating the Global Optimization of Nonconvex QCQPs}}

	% Block of authors and their affiliations starts here:
	% NOTE: Authors with same affiliation, if the order of authors allows,
	%   should be entered in ONE field, separated by a comma.
	%   \EMAIL field can be repeated if more than one author

	\ARTICLEAUTHORS{
\AUTHOR{Rohit Kannan,\textsuperscript{a,*} Harsha Nagarajan,\textsuperscript{b,*} Deepjyoti Deka\textsuperscript{c}}

\AFF{$^{a}$Grado Department of Industrial and Systems Engineering, Virginia Tech, Blacksburg, VA, USA \\ 
$^{b}$Applied Mathematics and Plasma Physics, Theoretical Division, Los Alamos National Laboratory, Los Alamos, NM, USA \\ 
$^{c}$MIT Energy Initiative, Massachusetts Institute of Technology, Cambridge, MA, USA \\ 
$^{*}$Corresponding authors \\ {\bf Contact:} 
\href{mailto:rohitkannan@vt.edu}{rohitkannan@vt.edu}, \href{mailto:harsha@lanl.gov}{harsha@lanl.gov}, \href{mailto:deepj87@mit.edu}{deepj87@mit.edu}}

}
	 % end of the block
	
% \ARTICLEINFO{\textbf{Initial Submission:} December 31, 2022\\ \textbf{Revised:} February 22, 2023; eptember 15, 2024\\ \textbf{Final Version:} May 20, 2025}

\ABSTRACT{We learn optimal instance-specific heuristics for the global minimization of nonconvex quadratically-constrained quadratic programs (QCQPs).
Specifically, we consider partitioning-based convex mixed-integer programming relaxations for nonconvex QCQPs and propose the novel problem of strong partitioning to optimally partition variable domains \textit{without} sacrificing global optimality.
Since solving this max-min strong partitioning problem exactly can be very challenging, we design a local optimization method that leverages generalized gradients of the value function of its inner-minimization problem.
However, even solving the strong partitioning problem to local optimality can be time-consuming.
To address this, we propose a simple and practical machine learning (ML) approximation for {homogeneous} families of QCQPs.
{Motivated by practical applications, we conduct a detailed computational study using the open-source global solver Alpine to evaluate the effectiveness of our ML approximation in accelerating the repeated solution of homogeneous QCQPs with fixed structure. 
Our study considers randomly generated QCQP families, including instances of the pooling problem, that are benchmarked using state-of-the-art global optimization software.}
Numerical experiments demonstrate that our ML approximation of strong partitioning reduces Alpine's solution time by a factor of $2$ to $4.5$ \textit{on average}, with maximum reduction factors ranging from $10$ to $200$ across {these} QCQP families.
}

\FUNDING{We gratefully acknowledge funding from LANL's Center for Nonlinear Studies and the U.S.\ Department of Energy's LDRD program for the projects ``20230091ER: Learning to Accelerate Global Solutions for Non-convex Optimization'' and ``20210078DR: The Optimization of Machine Learning: Imposing Requirements on Artificial Intelligence.''}

% See \url{https://pubsonline.informs.org/pb-assets/ORMSSubjectClassifications-1519925344200.pdf}

% \SUBJECTCLASS{
% % \AQ{Please confirm subject classifications.} 
% programming: nonlinear: algorithms, theory}

% \AREAOFREVIEW{Design \& Analysis of Algorithms - Continuous.}

\KEYWORDS{Nonconvex Quadratically-Constrained Quadratic Programming; Global Optimization; Piecewise McCormick Relaxations; Strong Partitioning; Sensitivity Analysis; Machine Learning; Pooling Problem. \\ \hspace*{-0.15in} \textbf{Initial Submission:} December 31, 2022. \:\: \textbf{Revised:} February 22, 2023; September 15, 2024. \:\: \textbf{Final Version:} May 20, 2025}

\maketitle

\section{Introduction}

Many real-world applications involve the repeated solution of the same underlying quadratically-constrained quadratic program (QCQP) with slightly varying model parameters.
Examples include the pooling problem with varying input qualities~\citep{misener2013glomiqo} and the cost-efficient operation of the power grid with varying loads and renewable sources~\citep{bienstock2022mathematical}.
These hard optimization problems are typically solved using off-the-shelf global optimization software~\citep{bestuzheva2021scip,misener2014antigone,nagarajan2019adaptive,sahinidis1996baron} that do not fully exploit the shared problem structure---heuristics within these implementations are engineered to work well \textit{on average} over a \textit{diverse} set of instances and may perform suboptimally for instances from a specific application~\citep{liu2019tuning}.
Recent work~\citep{bengio2021machine,lodi2017learning} has shown that tailoring branching decisions can significantly accelerate branch-and-bound (B\&B) algorithms for mixed-integer linear programs (MILPs).
In contrast, only a few papers (see Section~\ref{subsec:learning_minlps}) attempt to use machine learning (ML) to accelerate the guaranteed global solution of nonconvex \textit{nonlinear} programs.

We use ML to accelerate partitioning algorithms~\citep{bergamini2008improved,saif2008global,wicaksono2008piecewise} for the global minimization of nonconvex QCQPs.
Partitioning algorithms determine lower bounds on the optimal value of a nonconvex QCQP using piecewise convex relaxations.
They begin by selecting a subset of the continuous variables involved in nonconvex terms for partitioning.
At each iteration, they refine the partitions of these variables' domains and update the piecewise convex relaxations in their lower bounding formulation.
A convex mixed-integer program (MIP) is then solved to determine a lower bound~\citep{lu2018tight,sundar2021piecewise}.
Partitioning algorithms typically use heuristics to specify the locations of {\pp} and continue to refine their variable partitions until the lower bounds converge to the optimal objective value.
Since the complexity of their MIP relaxations can grow significantly with each iteration, the choice of {\pp} in the initial iterations can have a huge impact on the performance of these algorithms~\citep{nagarajan2019adaptive}.
Despite their importance, the optimal selection of partitioning points is not well understood, and current approaches resort to heuristics such as bisecting the active partition~\citep{castro2016normalized,saif2008global,wicaksono2008piecewise} or adding {\pp} near the lower bounding solution~\citep{bergamini2008improved,nagarajan2019adaptive} to refine variable partitions.

\textbf{Proposed approach.} 
We learn to optimally partition variables' domains in a given QCQP \textit{without} sacrificing global optimality.
Similar to strong branching in B\&B algorithms for MIPs~\citep{achterberg2007constraint,misener2013glomiqo}, we introduce \textit{strong partitioning}, a new concept in the global optimization literature, to select {\pp} for the construction of piecewise convex relaxations of nonconvex problems.
The key idea behind strong partitioning is to determine a specified number of {\pp} per variable such that the resulting piecewise convex relaxation-based lower bound is maximized.
We formulate strong partitioning as a \mbox{max-min} problem, where the outer-maximization selects the {\pp} and the inner-minimization solves the piecewise convex relaxation-based lower bounding problem for a given partition. 
{Since solving this max-min problem exactly can be very challenging, we design a local optimization method that uses generalized gradients of the value function of the inner-minimization problem within a bundle solver for nonsmooth nonconvex optimization.
However, even finding a local solution to this max-min problem can be computationally expensive as each iteration of the bundle method requires the solution of a MIP.
Therefore, we propose a simple and practical off-the-shelf ML model to imitate the strong partitioning strategy for {homogeneous} QCQP instances.}
{Motivated by practical applications, we evaluate the performance of strong partitioning and its ML approximation for the repeated solution of homogeneous QCQPs with fixed structure using the open-source global solver Alpine~\citep{nagarajan2016tightening,nagarajan2019adaptive}.
For our computational study, we generate random QCQP families, including instances of the pooling problem, and benchmark them using the state-of-the-art global optimization solvers BARON~\citep{sahinidis1996baron} and Gurobi.}
{Numerical experiments demonstrate that our ML approximation of strong partitioning reduces Alpine's solution time by a factor of $2$ to $4.5$ on average, with a maximum reduction factor of $10$ to $200$ across {these} QCQP families.
Additionally, the results indicate that an efficient ML model capable of perfectly imitating the strong partitioning strategy would reduce Alpine's solution time by a factor of $3.5$ to $16.5$ on average, and by a maximum factor of $15$ to $700$ over the same set of instances.
These observations underscore the potential of strong partitioning as an expert strategy for learning to accelerate the global optimization of nonconvex QCQPs.}

This paper is organized as follows.
Section~\ref{sec:relatedwork} reviews ML approaches to accelerate the \textit{guaranteed global solution} of MILPs and mixed-integer nonlinear programs.
Section~\ref{sec:partitioning_bounds} outlines partitioning-based bounding methods for nonconvex QCQPs.
Section~\ref{sec:strongpart} introduces strong partitioning and designs an algorithm for its local solution with theoretical guarantees.
{Section~\ref{sec:ml_approx} outlines our ML approximation of strong partitioning for {homogeneous} QCQP families,} and Section~\ref{sec:computexp} presents detailed computational results demonstrating the effectiveness of using strong partitioning and our ML approximation to select Alpine's partitioning points {for the repeated solution of homogeneous QCQPs with fixed structure}.
We conclude with directions for future research in Section~\ref{sec:conclusion}.

\textbf{Notation.}
Let $[n] := \{1,\dots,n\}$, $\mathbb{R}_+$ denote the set of nonnegative reals, $\mathcal{S}^n$ denote the set of symmetric $n \times n$ matrices, and $\inp{\mathbf{A}}{\mathbf{B}}$ denote the Frobenius inner product between $\mathbf{A}, \mathbf{B} \in \mathcal{S}^n$.
We write $v_i$ to denote the $i$th component of \mbox{$\mathbf{v} = (v_1,v_2,\dots,v_n) \in \mathbb{R}^n$}, $\mathbf{M}_i$ and $M_{ij}$ to denote the $i$th row and $(i,j)$th entry of a matrix~$\mathbf{M}$, and $\abs{\mathcal{C}}$, $\text{ri}(\mathcal{C})$, and $\text{conv}(\mathcal{C})$ to denote the cardinality, relative interior, and convex hull of a set~$\mathcal{C}$.

\section{Related work}
\label{sec:relatedwork}

{Optimization solvers tune algorithmic parameters through extensive testing on benchmark libraries. Many solvers also incorporate problem-specific analysis to set key parameters.
However, these tunings are typically based on efficiently computable heuristics designed to perform well across a broad range of instances, which may not fully exploit the unique characteristics of each specific instance.
Machine learning offers the potential to efficiently approximate more sophisticated, instance-specific heuristics, and could lead to significant computational gains for particularly challenging instances.}
\citet{bengio2021machine,cappart2021combinatorial} and~\citet{lodi2017learning} survey the burgeoning field of using ML to accelerate MILP and combinatorial optimization algorithms.
In the next sections, we review related approaches on learning to branch for MILPs and learning to accelerate the guaranteed solution of (mixed-integer) nonlinear programs.

\subsection{Learning to branch for MILPs}

Branch-and-bound and its variants form the backbone of modern MILP solvers.
The choice of branching variable at each node of the B\&B tree can have a huge impact on the run time of B\&B algorithms~\citep{achterberg2007constraint}.
{Strong branching is a heuristic for selecting the branching variable that often empirically results in a small number of B\&B nodes explored.}
It selects the variable that maximizes the product of improvements in the lower bounds of the two child nodes, assuming both are feasible.
While strong branching results in a $65\%$ reduction in the number of nodes explored by the B\&B tree on average (relative to the default branching strategy) over standard test instances, it also leads to a $44\%$ increase in the average solution time~\citep{achterberg2007constraint}.
MILP solvers therefore tend to use computationally cheaper heuristic approximations of strong branching, such as reliability, pseudocost, or hybrid branching.
Motivated by the promise of strong branching, most approaches on learning to branch for MILPs aim to develop a computationally efficient and effective ML approximation, e.g., using extremely randomized trees~\citep{alvarez2017machine}, support vector machines~\citep{khalil2016learning}, or graph neural networks~\citep{gasse2019exact,nair2020solving}.
Other ML approaches for branching variable selection use online and reinforcement learning~\citep{he2014learning}, or learn combinations of existing heuristics to make better branching decisions~\citep{di2016dash}.

\subsection{Learning to solve mixed-integer nonlinear problems}
\label{subsec:learning_minlps}

There are relatively fewer approaches in the literature for accelerating the \textit{guaranteed global solution} of nonconvex (mixed-integer) nonlinear programs using ML. 
To the best of our knowledge, none of these approaches use ML to accelerate partitioning-based global optimization algorithms.

\citet{baltean2019scoring} consider the global solution of nonconvex QCQPs using semidefinite programming relaxations.
To mitigate the computational burden of these relaxations, they use ML to construct effective linear outer-approximations.
Specifically, they train a neural network to select cuts from a semidefinite relaxation based on their sparsity and predicted impact on the objective.
Their approach results in computationally efficient relaxations that can be effectively integrated into global solvers.

\citet{ghaddar2022learning} explore branching variable selection in a B\&B search tree embedded within the reformulation-linearization technique for solving polynomial problems~\citep{sherali2013reformulation}.
They use ML to choose the ``best branching strategy'' from a portfolio of branching rules, designing several hand-crafted features to optimize a quantile regression forest-based approximation of their performance metric.
\citet{gonzalez2022polynomial} build on this approach by using ML to select a subset of second-order cone and semidefinite constraints and further strengthen the formulation.

\citet{bonami2018learning} train classifiers to predict whether linearizing products of binary variables or binary and continuous variables is computationally advantageous for solving mixed-integer quadratic programs.
\citet{nannicini2011probing} train a support vector machine classifier to decide if an expensive optimality-based bound tightening routine should replace a cheaper feasibility-based routine for mixed-integer nonlinear programs.
\citet{cengil2022learning} consider the AC optimal power flow problem and train a neural network to identify a small subset of lines and buses for which an optimality-based bound tightening routine is applied.
Finally, \citet{lee2020accelerating} use classification and regression techniques to identify effective cuts for the generalized Benders decomposition master problem.

In the next section, we review partitioning algorithms for the global minimization of QCQPs, before introducing strong partitioning and an ML approximation to accelerate these algorithms.

\section{Partitioning-based bounds for QCQPs}
\label{sec:partitioning_bounds}

Consider the nonconvex QCQP
\begin{alignat}{2}
\label{eqn:orig_qcqp}
&\min_{\bfx \in [0,1]^{n}} \:\: && \bfx^{\textup{T}} \bfQ^0 \bfx + (\bfr^0)^{\textup{T}} \bfx \\
&\quad\: \text{s.t.} && \bfx^{\textup{T}} \bfQ^i \bfx + (\bfr^i)^{\textup{T}} \bfx \leq b_i, \quad \forall i \in [m], \nonumber
\end{alignat}
where $\bfb \in \R^{m}$, $\bfr^i \in \R^n$, $\forall i \in \{0\} \cup [m]$, and $\bfQ^i \in \mathcal{S}^n$, $\forall i \in \{0\} \cup [m]$, are \textit{not} assumed to be positive semidefinite.
QCQPs with equality constraints and different variable bounds can be handled using simple transformations.
Polynomial optimization problems may also be reformulated as QCQPs through the addition of variables and constraints.
\citet{pardalos1991quadratic} show that the special case of problem~\eqref{eqn:orig_qcqp} where $\bfQ^0$ has a single negative eigenvalue and $\bfQ^i = \bfzero$, $\forall i \in [m]$, is NP-hard.

QCQPs arise in several applications~\citep{furini2019qplib,misener2013glomiqo} such as facility location~\citep{koopmans1957assignment}, refinery optimization~\citep{kannan2018algorithms,yang2016integrated}, and electric grid optimization \citep{bienstock2022mathematical}.
By introducing auxiliary variables and constraints, we can reformulate problem~\eqref{eqn:orig_qcqp} into the following equivalent form:
\begin{alignat}{2}
\label{eqn:qcqp}
v^* := \: &\min_{\bfx \in [0,1]^n, \bfW \in \mathcal{S}^n} \:\: && \tr{(\bfr^0)} \bfx + \inp{\bfQ^0}{\bfW} \tag{QCQP} \\
&\quad\quad\:\: \text{s.t.} && \tr{(\bfr^i)} \bfx + \inp{\bfQ^i}{\bfW} \leq b_i, \quad \forall i \in [m], \nonumber\\
& && W_{ij} = x_i x_j, \quad \forall (i, j) \in \mathcal{B}, \nonumber\\
& && W_{kk} = x^2_k, \quad \forall k \in \mathcal{Q}, \nonumber
\end{alignat}
where the index sets
$\mathcal{B} \subset \{(i,j) \in [n]^2 : i < j \}$ and $\mathcal{Q} \subset [n]$ denote the (pairs of) variables participating in distinct bilinear and univariate quadratic terms.
We assume for simplicity that~\eqref{eqn:qcqp} is feasible, and define $\mathcal{F} := \{(\bfx,\bfW) \in [0,1]^n \times \mathcal{S}^n : \tr{(\bfr^i)} \bfx + \inp{\bfQ^i}{\bfW} \leq b_i, \: \forall i \in [m]\}$ for convenience.

\citet{al1983jointly} propose to use termwise McCormick relaxations~\citep{mccormick1976computability} to construct lower bounds on the optimal value of \eqref{eqn:qcqp}. 
Specifically, they employ the following convex lower bounding problem within a spatial B\&B framework for solving~\eqref{eqn:qcqp} to global optimality:
\begin{alignat}{2}
\label{eqn:mccormick}
&\min_{(\bfx,\bfW) \in \mathcal{F}} \:\: && \tr{(\bfr^0)} \bfx + \inp{\bfQ^0}{\bfW} \\
&\quad\:\: \text{s.t.} && \max\{0, x_i + x_j -1\} \leq W_{ij} \leq \min\{x_i, x_j\}, \quad \forall (i, j) \in \mathcal{B}, \nonumber\\
& && x^2_k \leq W_{kk} \leq x_k, \quad \forall k \in \mathcal{Q}. \nonumber
\end{alignat}
{Several papers~\citep[e.g.,][]{bao2011semidefinite,bergamini2008improved,billionnet2012extending,burer2008finite,nohra2021spectral,nohra2021sdp,saif2008global,sherali2013reformulation,shor1987quadratic,wicaksono2008piecewise}} improve upon this termwise McCormick bound, usually at an increase in the computational cost but with the goal of reducing the overall time for the B\&B algorithm to converge.
In this work, we use {\pmc} relaxations~\citep{bergamini2008improved,castro2016normalized,nagarajan2019adaptive,saif2008global,wicaksono2008piecewise} to iteratively strengthen the lower bounding problem~\eqref{eqn:mccormick}.

Piecewise McCormick relaxations begin by partitioning the domains of (a subset of) variables participating in nonconvex terms into subintervals.
{Let 
\[
\mathcal{NC} := \big\{i \in [n] : \exists j \in [n] \: \text{such that} \: (i,j) \in \mathcal{B} \: \text{or} \: (j,i) \in \mathcal{B} \big\} \cup \mathcal{Q}
\]
denote the set of indices of variables participating in nonconvex terms within~\eqref{eqn:qcqp}.
We assume, without loss of generality, that $\mathcal{NC} = \{1, 2, \dots, \abs{\mathcal{NC}}\}$, i.e., only the first $\abs{\mathcal{NC}}$ variables $x_i$ participate in nonconvex terms.
Furthermore, we assume for simplicity that the domain of each variable $x_i$, $i \in \mathcal{NC}$, is partitioned into $d+1$ subintervals, where $d \geq 1$ (other partitioning schemes can be handled similarly).
Let $\bfP$ denote the $\abs{\mathcal{NC}} \times (d+2)$ matrix of partitioning points (including the variable bounds $0$ and $1$), where
\[
\bfP_i := (P_{i1}, P_{i2},\dots,P_{i(d+1)}, P_{i(d+2)}), \quad \text{with} \quad 0 =: P_{i1} \leq P_{i2} \leq \dots \leq P_{i(d+1)} \leq P_{i(d+2)} := 1, \quad \forall i \in \mathcal{NC},
\]
denotes the vector of $d+2$ {\pp} for variable $x_i$, $i \in \mathcal{NC}$.
Unlike approaches that select a subset of variables $x_i$, $i \in \mathcal{NC}$, involved in nonconvex terms for partitioning~\citep{nagarajan2019adaptive}, we partition the domains of \textit{all} such variables.
While partitioning only a subset may suffice to guarantee convergence, the resulting partitioning algorithms for~\eqref{eqn:qcqp} may suffer from the cluster problem in reduced-space global optimization~\citep{kannan2017cluster,kannan2018convergence}, potentially requiring significantly more iterations.}

The {\pmc} relaxation-based lower bounding problem for~\eqref{eqn:qcqp} can be expressed as:
\begin{alignat}{2}
\label{eqn:piecewise_mccormick}
&\min_{(\bfx,\bfW) \in \mathcal{F}} \:\: && \tr{(\bfr^0)} \bfx + \inp{\bfQ^0}{\bfW} \\
&\quad\:\: \text{s.t.} && (x_i, x_j, W_{ij}) \in \mathcal{PMR}^{B}_{ij}(\bfP_i,\bfP_j), \quad \forall (i, j) \in \mathcal{B}, \nonumber\\
& && (x_k, W_{kk}) \in \mathcal{PMR}^{Q}_k(\bfP_k), \quad \forall k \in \mathcal{Q}, \nonumber
\end{alignat}
where $\mathcal{PMR}^B_{ij}(\bfP_i, \bfP_j)$ and $\mathcal{PMR}^{Q}_{k}(\bfP_k)$
denote the feasible regions of the {\pmc} relaxations of $W_{ij} = x_i x_j$ and $W_{kk} = x^2_k$, respectively, obtained using the partitioning matrix~$\bfP$.
While there are several ways of formulating these {\pmc} relaxations, we use the ``convex combination'' or ``lambda'' formulation below~\citep[see][for enhancements in the multilinear setting]{kim2022piecewise}.

The piecewise \mbox{McCormick} relaxation for the constraint $W_{ij} = x_i x_j$ is written as~\citep{sundar2021piecewise}:
\begin{align*}
\mathcal{PMR}^B_{ij}(\bfP_i, \bfP_j) &:= \Big\{ (x_i, x_j, W_{ij}) \: : \: \exists \bfLambda^{ij} \in \R^{(d+2) \times (d+2)}_+, \: \bfY_{i} \in \{0,1\}^{(d+1)}, \: \bfY_{j} \in \{0,1\}^{(d+1)} \\
&\hspace*{1.7in} \text{s.t.} \:\: (x_i, x_j, W_{ij}, \bfLambda^{ij}, \bfY_{i}, \bfY_{j}) \:\: \text{satisfies} \:\: \eqref{eqn:lambda_form_bilinear_conv_comb}-\eqref{eqn:lambda_form_bilinear_active2} \Big\},
\end{align*}
where
{
% \small
\begin{subequations}
\begin{alignat}{3}
&x_i = \sum_{k,l=1}^{d+2} \Lambda^{ij}_{kl} P_{il}, \quad && x_j = \sum_{k,l=1}^{d+2} \Lambda^{ij}_{kl} P_{jk}, \quad && W_{ij} = \sum_{k,l=1}^{d+2} \Lambda^{ij}_{kl} P_{il} P_{jk}, \label{eqn:lambda_form_bilinear_conv_comb}\\
&\sum_{l=1}^{d+1} Y_{il} = 1,   \qquad\qquad\:\:\: &&\sum_{k=1}^{d+1} Y_{jk} = 1,   \quad\qquad\qquad &&\sum_{k,l=1}^{d+2} \Lambda^{ij}_{kl} = 1, \label{eqn:lambda_form_bilinear_bin} \\
&\sum_{k=1}^{d+2} \Lambda^{ij}_{k1} \leq Y_{i1},  \quad &&\sum_{k=1}^{d+2} \Lambda^{ij}_{k(d+2)} \leq Y_{i(d+1)},  \qquad &&\sum_{k=1}^{d+2} \Lambda^{ij}_{k(l+1)} \leq Y_{il} + Y_{i(l+1)}, \:\: \forall l \in [d], \label{eqn:lambda_form_bilinear_active} \\
&\sum_{l=1}^{d+2} \Lambda^{ij}_{1l} \leq Y_{j1},  \qquad\quad\:\:\: &&\sum_{l=1}^{d+2} \Lambda^{ij}_{(d+2) l} \leq Y_{j(d+1)},  \quad &&\sum_{l=1}^{d+2} \Lambda^{ij}_{(k+1)l} \leq Y_{jk} + Y_{j(k+1)}, \:\: \forall k \in [d]. \label{eqn:lambda_form_bilinear_active2}
\end{alignat}
\end{subequations}
}%
Only equations~\eqref{eqn:lambda_form_bilinear_conv_comb} depend on the partitioning matrix $\bfP$, which is a parameter in these constraints. The binary vectors $\bfY_i$ and $\bfY_j$ denote the active partition of $x_i$ and $x_j$---these variables are reused in the {\pmc} relaxations of other nonconvex terms involving $x_i$ or~$x_j$. 

The {\pmc} relaxation for $W_{kk} = x^2_k$ is written as~\citep{lu2018tight,nagarajan2019adaptive}:
\begin{align*}
\mathcal{PMR}^Q_{k}(\bfP_k) &:= \Big\{ (x_k, W_{kk}) \: : \: \exists \bfLambda^{k} \in \R^{(d+2)}_+, \: \bfY_{k} \in \{0,1\}^{(d+1)} \:\: \text{s.t.} \:\: (x_k, W_{kk}, \bfLambda^{k}, \bfY_{k}) \:\: \text{satisfies} \:\: \eqref{eqn:lambda_form_quadratic_conv_comb}-\eqref{eqn:lambda_form_quadratic_active} \Big\},
\end{align*}
where
{
% \small
\begin{subequations}
\begin{alignat}{3}
&x_k = \sum_{l=1}^{d+2} \Lambda^{k}_{l} P_{kl}, \qquad  && W_{kk} \leq \sum_{l=1}^{d+2} \Lambda^{k}_{l} (P_{kl})^2, \qquad && \sum_{l=1}^{d+1} Y_{kl} P_{kl} \leq x_k \leq \sum_{l=1}^{d+1} Y_{kl} P_{k(l+1)}, \label{eqn:lambda_form_quadratic_conv_comb}\\
&\sum_{l=1}^{d+1} Y_{kl} = 1,   &&\sum_{l=1}^{d+2} \Lambda^{k}_l = 1, && W_{kk} \geq x^2_k, \label{eqn:lambda_form_quadratic_bin} \\
&\Lambda^{k}_{1} \leq Y_{k1},  && \Lambda^{k}_{d+2} \leq Y_{k(d+1)},  &&\Lambda^{k}_{l+1} \leq Y_{kl} + Y_{k(l+1)}, \:\: \forall l \in [d]. \label{eqn:lambda_form_quadratic_active}
\end{alignat}
\end{subequations}
}%
Only equations~\eqref{eqn:lambda_form_quadratic_conv_comb} depend on the partitioning matrix $\bfP$, which is again a parameter in these constraints. The third set of constraints in~\eqref{eqn:lambda_form_quadratic_conv_comb} are redundant for the description of the set $\mathcal{PMR}^Q_{k}(\bfP_k)$, but strengthen its convex relaxation. 
Note that equations~\eqref{eqn:lambda_form_quadratic_bin} involve convex quadratic functions of $x_k$. 
Figure~\ref{fig:piecewise_mccormick} illustrates the {\pmc} relaxations for a bilinear and a univariate quadratic term.

\begin{figure}
\centering

% Visualizing the Piecewise McCormick Relaxations for a bilinear and a univariate quadratic term. 
\caption{Illustration of piecewise McCormick relaxations for a bilinear and a univariate quadratic term. The variable domains are changed from $[0,1]$ to $[-1,1]$ for better illustration.
The left and middle plots illustrate the lower and upper parts, respectively, of the piecewise McCormick relaxation for the bilinear term \mbox{$w_{12} = x_1 x_2$} on the domain $x_1, x_2 \in [-1,1]$ 
% (domain changed from $[0,1]$ for better illustration) 
with partitions \mbox{$\bfP_1 = \bfP_2 = (-1,0,1)$}.
% The right plot illustrates the piecewise McCormick relaxation for the quadratic term $w_{11} = x^2_1$ on the domain $x_1 \in [0,1]$ (the lower part coincides with the red curve) with the partition $\mathcal{P}_1 = \{0,0.3,0.6,1\}$.
The right plot illustrates the piecewise McCormick relaxation for the quadratic term $w_{11} = x^2_1$ on the domain $x_1 \in [-1,1]$ (the lower part coincides with the red quadratic curve) with the partition $\bfP_1 = (-1,0,1)$.}
\label{fig:piecewise_mccormick}

\vspace*{0.2in}
\begin{subfigure}[t]{0.32\textwidth}
\includegraphics[width=\linewidth]{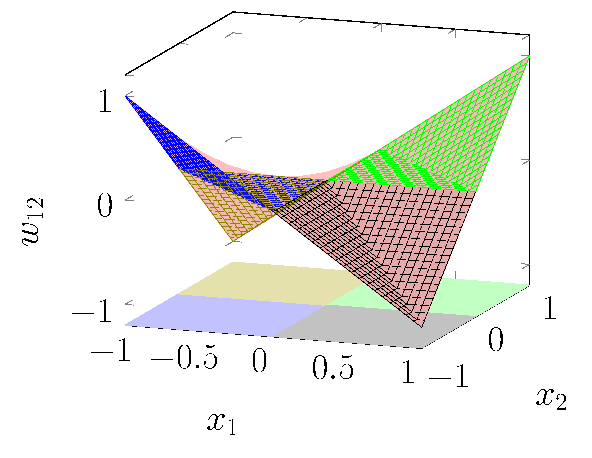}
\end{subfigure}
\hfill
\begin{subfigure}[t]{0.32\textwidth}
\includegraphics[width=\linewidth]{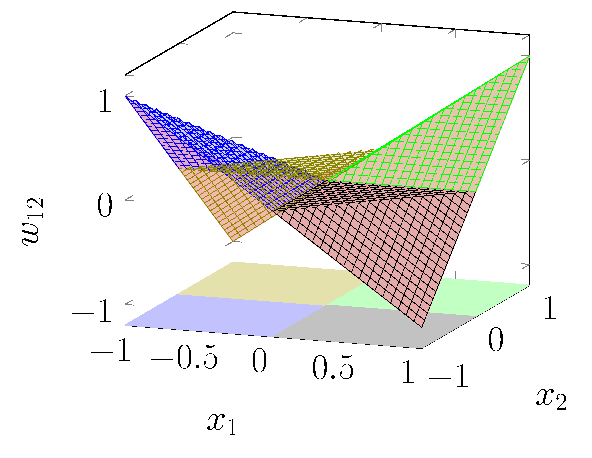}
\end{subfigure}
\hfill
\begin{subfigure}[t]{0.33\textwidth}
\includegraphics[width=\linewidth]{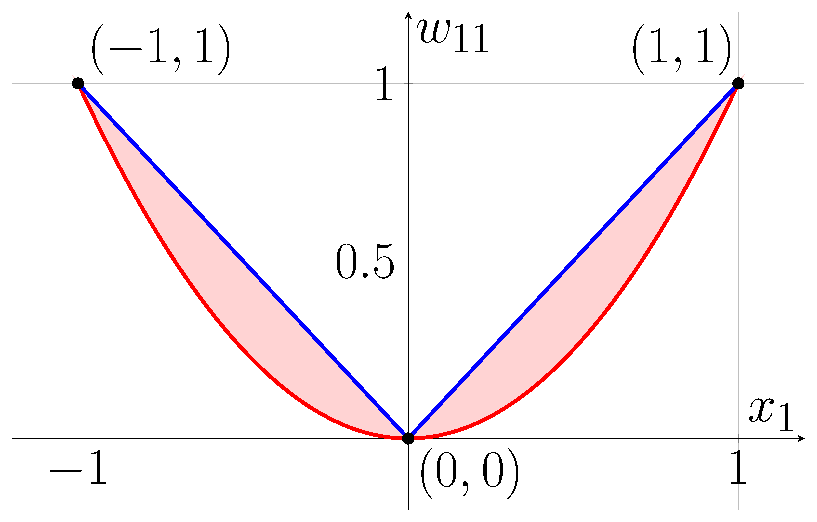}
\end{subfigure}

\end{figure}

Using the above representations of $\mathcal{PMR}^B_{ij}$ and $\mathcal{PMR}^Q_{k}$, we get the following extended \textit{convex} mixed-integer QCQP formulation for the {\pmc} relaxation problem~\eqref{eqn:piecewise_mccormick}:
\begin{alignat}{2}
\label{eqn:piecewise_mccormick_mip}
&\underset{\bfLambda \geq \bfzero, \bfY \in \mathcal{Y}}{\min_{(\bfx,\bfW) \in \mathcal{F}}} && \tr{(\bfr^0)} \bfx + \inp{\bfQ^0}{\bfW} \tag{PMR}\\
&\quad\:\: \text{s.t.} \quad && (x_i, x_j, W_{ij}, \bfLambda^{ij}, \bfY_i, \bfY_j) \:\: \text{satisfies} \:\: \eqref{eqn:lambda_form_bilinear_conv_comb}-\eqref{eqn:lambda_form_bilinear_active2}, \quad \forall (i,j) \in \mathcal{B}, \nonumber\\
& && (x_k, W_{kk}, \bfLambda^k, \bfY_k) \:\: \text{satisfies} \:\: \eqref{eqn:lambda_form_quadratic_conv_comb}-\eqref{eqn:lambda_form_quadratic_active}, \quad \forall k \in \mathcal{Q}, \nonumber
\end{alignat}
where $\bfY \in \{0,1\}^{\abs{\mathcal{NC}} \times (d+1)}$, $\mathcal{Y} := \big\{\bfY \in \{0,1\}^{\abs{\mathcal{NC}} \times (d+1)} : \sum_{l=1}^{d+1} Y_{il} = 1, \: \forall i \in \mathcal{NC} \big\}$ is an intersection of special-ordered sets of type~1, and $\bfLambda$ comprises $\bfLambda^{ij}$, $(i,j) \in \mathcal{B}$, and $\bfLambda^k$, $k \in \mathcal{Q}$.
{
We call the $j$th partition $[P_{ij}, P_{i(j+1)}]$ of variable $x_i$, $i \in \mathcal{NC}$, active if there exists an optimal solution to~\eqref{eqn:piecewise_mccormick_mip} with an \mbox{$\bfx$-component} $\bar{\bfx} \in [0,1]^n$ such that $P_{ij} \leq \bar{x}_i \leq P_{i(j+1)}$. 
Equivalently, the $j$th partition $[P_{ij}, P_{i(j+1)}]$ of $x_i$ is said to be active if there exists an optimal solution to~\eqref{eqn:piecewise_mccormick_mip} with a $\bfY$-component $\bar{\bfY} \in \mathcal{Y}$ such that $\bar{Y}_{ij} = 1$.
}

\begin{algorithm}[t]
\caption{Partitioning-based global optimization algorithm for~\eqref{eqn:qcqp}}
{
\begin{algorithmic}[1]

\State \textbf{Input:} relative optimality tolerance $\varepsilon_r > 0$.

\State \textbf{Initialization:} partitioning points $\bfP^0_i := (0, 1)$, $\forall i \in \mathcal{NC}$, best found solution $\{\hat{\bfx}\} = \emptyset$ with objective $UBD = +\infty$, lower bound $LBD = -\infty$, and iteration number $l = 0$.

\vspace*{0.05in}
\State Solve~\eqref{eqn:qcqp} locally. Update the incumbent $\hat{\bfx}$ and upper bound $UBD$ if relevant.

\vspace*{0.05in}
\State Solve the termwise McCormick relaxation~\eqref{eqn:mccormick}. Update the lower bound $LBD$.

\vspace*{0.05in}
\While{$\dfrac{UBD - LBD}{\abs{UBD} + 10^{-6}} > \varepsilon_r$ \textbf{or} $UBD = +\infty$}

\vspace*{0.05in}
\State Update $l \leftarrow l+1$.

\State\label{step:refine_part} \textbf{Partition refinement:} add new partitioning points to $\bfP^{l-1}$ to obtain the partitioning matrix $\bfP^l$.

\State Solve~\eqref{eqn:piecewise_mccormick_mip} with the partitioning matrix $\bfP^l$. Update the lower bound $LBD$.

\State Solve~\eqref{eqn:qcqp} locally. Update the incumbent $\hat{\bfx}$ and upper bound $UBD$ if needed.

\EndWhile

\vspace*{0.05in}
\State Return the $\varepsilon_r$-optimal $\bfx$-solution $\hat{\bfx}$, upper bound $UBD$, and lower bound $LBD$.

\end{algorithmic}
}
\label{alg:partitioning_algorithm}
\end{algorithm}

Algorithm~\ref{alg:partitioning_algorithm} outlines a partitioning-based global optimization algorithm that solves problem~\eqref{eqn:piecewise_mccormick_mip} to determine a sequence of lower bounds on the optimal value of~\eqref{eqn:qcqp}.
It assumes that local search can find a near-optimal solution to~\eqref{eqn:qcqp} within a finite number of iterations---a standard assumption that often holds in practice.
{The convergence of the lower bound is typically the limiting factor in the convergence of partitioning-based algorithms.
In particular, the choice of heuristic on line~\ref{step:refine_part} of Algorithm~\ref{alg:partitioning_algorithm} for refining the partitioning matrix $\bfP$ can greatly impact both the number of iterations and the time required for convergence.
This motivates the concept of strong partitioning, which selects the partitioning matrix $\bfP$ to maximize the {\pmc} relaxation-based lower bound.
We define partition refinement policies in Section~\ref{sec:partpolicy}, and introduce the strong partitioning policy and an ML approximation in Sections~\ref{subsec:strong_part_policy} and~\ref{sec:ml_approx}.
Section~\ref{sec:alpinepart} details the partition refinement policy implemented within Alpine~\citep{nagarajan2019adaptive}.}
Note that Algorithm~\ref{alg:partitioning_algorithm} can be readily adapted to include enhancements such as bound tightening.

\begin{algorithm}[t]
\caption{{Generic partition refinement policy for iteration $l$}}
\label{alg:generic_partitioning_refinement_strategy}
{
\begin{algorithmic}[1]

\State \textbf{Input:} partitioning matrix $\bfP^{l-1}$ used to construct {\pmc} relaxations at iteration $l-1$, index $\mathcal{A}(i,l-1) \in \mathbb{N}$ of an active partition for variable $x_i$ at iteration $l-1$, $\forall i \in \mathcal{NC}$, and the number of new partitioning points $d_l \in \mathbb{N}$ to be added to $\bfP^{l-1}_i$, $\forall i \in \mathcal{NC}$.

\State \textbf{Initialization:} set $\bfP^l = \bfP^{l-1}$.

\vspace*{0.05in}
\For{$i \in \mathcal{NC}$}

\State Add $d_l$ new partitioning points to the active partition $\big[P^{l-1}_{i(\mathcal{A}(i,l-1))}, P^{l-1}_{i(\mathcal{A}(i,l-1) + 1)}\big]$ for variable $x_i$ 

\Statex \hspace*{0.2in} to obtain a refined vector of partitioning points $\bfP^l_i$ for $x_i$.

\EndFor

\vspace*{0.05in}
\State \textbf{Output:} partitioning matrix $\bfP^l$ used to construct {\pmc} relaxations at iteration $l$.

\end{algorithmic}
}
\end{algorithm}

\subsection{{Partitioning policy}}
\label{sec:partpolicy}

{Algorithm~\ref{alg:generic_partitioning_refinement_strategy} outlines a generic policy for refining variable partitions at any given iteration $l$ of Algorithm~\ref{alg:partitioning_algorithm}.
It adds $d_l$ new partitioning points to the active partition of each variable $x_i$ involved in nonconvex terms.
Various partition refinement strategies, such as bisecting the active partition~\citep{castro2016normalized,saif2008global,wicaksono2008piecewise}, adding {\pp} near the lower bounding solution~\citep{bergamini2008improved,nagarajan2019adaptive}, and the strong partitioning and ML-based policies described in Sections~\ref{subsec:strong_part_policy} and~\ref{sec:ml_approx}, can be seen as specific instantiations of Algorithm~\ref{alg:generic_partitioning_refinement_strategy}.}

{In the next two sections, we introduce the strong partitioning and ML-based policies, which differ from the partitioning policy implemented in Alpine (see Section~\ref{sec:alpinepart} for details) only during the first iteration.}

\section{Strong partitioning for nonconvex QCQPs}
\label{sec:strongpart}

The choice of partitioning points in the initial iterations can greatly impact the strength of lower bounds, the number of iterations needed for convergence, and the overall solution time.
We introduce the \textit{Strong Partitioning} (SP) policy to overcome the limitations of existing heuristics for selecting partitioning points.

{Before presenting the concept of strong partitioning, we further relax~\eqref{eqn:piecewise_mccormick_mip} by outer-approximating the convex quadratic terms in equation~\eqref{eqn:lambda_form_quadratic_bin} to obtain the following MILP relaxation.}
For purely bilinear programs ($\Q = \emptyset$), problem~\eqref{eqn:piecewise_mccormick_mip} is already an MILP, and this outer-approximation step is unnecessary.
{
\begin{alignat}{2}
\label{eqn:piecewise_mccormick_mip_oa}
\underline{v}(\bfP) := &\underset{\bfLambda \geq \bfzero, \bfY \in \mathcal{Y}}{\min_{(\bfx,\bfW) \in \mathcal{F}}} && \tr{(\bfr^0)} \bfx + \inp{\bfQ^0}{\bfW} \tag{PMR-OA} \\
&\quad\:\: \text{s.t.} \quad && (x_i, x_j, W_{ij}, \bfLambda^{ij}, \bfY_i, \bfY_j) \:\: \text{satisfies} \:\: \eqref{eqn:lambda_form_bilinear_conv_comb}-\eqref{eqn:lambda_form_bilinear_active2}, \quad \forall (i,j) \in \mathcal{B}, \nonumber\\
& && (x_k, W_{kk}, \bfLambda^k, \bfY_k) \:\: \text{satisfies} \:\: \eqref{eqn:lambda_form_quadratic_conv_comb} \text{ and } \eqref{eqn:lambda_form_quadratic_active}, \quad \forall k \in \mathcal{Q}, \nonumber \\
& && \sum_{l=1}^{d+2} \Lambda^{k}_l = 1, \quad W_{kk} \geq (2 \alpha^k_j) x_k - (\alpha^k_j)^2, \quad \forall j \in \mathcal{J}_k, \:\: k \in \mathcal{Q}. \label{eqn:piecewise_mccormick_mip_oa_outer}
\end{alignat}%
}%
We explicitly indicate the dependence of the {\pmc} lower bound $\underline{v}$ on the partitioning matrix~$\bfP$.
Constraints~\eqref{eqn:piecewise_mccormick_mip_oa_outer} outer-approximate the inequalities $W_{kk} \geq x^2_k$ in equation~\eqref{eqn:lambda_form_quadratic_bin} at the points $\{\alpha^k_j\}_{j \in \mathcal{J}_k} \subset [0,1]$, which are assumed to include $P_{k1},\dots,P_{k(d+2)}$.
{This outer-approximation step is needed to compute a generalized gradient of the value function $\underline{v}$ with respect to the partitioning matrix $\bfP$, a key component of our SP algorithm (see Section~\ref{subsec:genlgrad} for details).}
\textit{We solve problem}~\eqref{eqn:piecewise_mccormick_mip_oa} \textit{only during strong partitioning} and revert to solving problem~\eqref{eqn:piecewise_mccormick_mip} when computing lower bounds within Algorithm~\ref{alg:partitioning_algorithm}.

{We recast~\eqref{eqn:piecewise_mccormick_mip_oa} into the following abstract form for mathematical convenience, using a suitably defined vector $\bfc$, matrices $\bar{\bfM}$ and $\bar{\bfB}$, matrix-valued functions $\bfM$ and $\bfd$ with co-domains $\mathbb{R}^{n_r \times n_c}$ and $\mathbb{R}^{n_r}$,
and a vector of variables $\bfz$ (including variables $\bfx$, $\bfW$, $\bfLambda$, and auxiliary/slack variables):}

\noindent 
\hspace*{0.3in}
\begin{minipage}{0.4\linewidth}
\vspace*{-0.4in}
% \label{eqn:piecewise_mccormick_mip_abstract}
\begin{align*}
\underline{v}(\bfP) &:= \min_{\bfY \in \mathcal{Y}} \: v(\bfP,\bfY), \qquad \text{where}
\end{align*}
\end{minipage}%
\hspace*{-0.2in}
\begin{minipage}{0.55\linewidth}
\begin{alignat}{3}
\label{eqn:piecewise_mccormick_mip_abstract_inner}
v(\bfP,\bfY) &:= \min_{\bfz \geq \bfzero} \:\: && \tr{\bfc} \bfz  \tag{PMR-OA-LP} \\
&\quad\quad \text{s.t.} \:\: && \bfM(\bfP) \bfz = \bfd(\bfP), \nonumber \\
& && \bar{\bfM} \bfz = \bar{\bfB}\: \text{vec}(\bfY), \nonumber
\end{alignat}
\end{minipage}

\vspace*{0.15in}
\noindent {where $\text{vec}(\bfY)$ denotes the vectorization of matrix $\bfY$. We omit in~\eqref{eqn:piecewise_mccormick_mip_abstract_inner} the third set of constraints in equation~\eqref{eqn:lambda_form_quadratic_conv_comb} because they are redundant for the {\pmc} relaxations $\mathcal{PMR}^Q_k$.
The constraints $\bar{\bfM} \bfz = \bar{\bfB}\: \text{vec}(\bfY)$ represent equations~\eqref{eqn:lambda_form_bilinear_active},~\eqref{eqn:lambda_form_bilinear_active2}, and~\eqref{eqn:lambda_form_quadratic_active} by adding slack variables.
They ensure that for any $\bfY \in \mathcal{Y}$, at most four of the $\bfLambda^{ij}$ variables in the extended formulation of each set $\mathcal{PMR}^B_{ij}$ and at most two of the $\bfLambda^k$ variables in the extended formulation of each set $\mathcal{PMR}^Q_k$ may be nonzero.
}

The concept of strong partitioning is analogous to strong branching in B\&B algorithms for MILPs. 
While strong branching for MILPs involves selecting the branching variable---a discrete choice---to maximize the product of improvements in the lower bounds of the two child nodes, strong partitioning selects {\pp} for \textit{each} partitioned variable---continuous choices within the variable domains---to maximize the {\pmc} relaxation lower bound.
It can be formulated as the following max-min problem:
\begin{align}
\label{eqn:strong_part}
\bfP^* &\in \argmax_{\bfP \in \mathcal{P}_d} \: \underline{v}(\bfP), \tag{SP}
\end{align}
where $\underline{v}(\bfP)$ is the optimal value function of problem~\eqref{eqn:piecewise_mccormick_mip_oa}, and the set $\mathcal{P}_d$ is defined as:
\[
\mathcal{P}_d := \bigl\{ \bfP \in [0,1]^{\abs{\mathcal{NC}} \times (d+2)} \: : \: 0 = P_{i1} \leq P_{i2} \leq \dots \leq P_{i(d+1)} \leq P_{i(d+2)} = 1, \: \forall i \in \mathcal{NC} \bigr\}.
\]
The strong partitioning problem~\eqref{eqn:strong_part} is challenging to solve even to local optimality because the inner problem~\eqref{eqn:piecewise_mccormick_mip_oa} includes binary decisions and its feasible region depends on~$\bfP$ (variables of the outer problem).
While~\eqref{eqn:strong_part} can be formulated as a generalized semi-infinite program, current global optimization algorithms for this problem class do not scale well~\citep{jungen2023libdips}.
Therefore, we design a local optimization method for~\eqref{eqn:strong_part} aimed at determining a partitioning matrix $\bar{\bfP} \in \mathcal{P}_d$ that yields a tight lower bound $\underline{v}(\bar{\bfP})$.
We use this local solution of~\eqref{eqn:strong_part} to specify the partitioning matrix $\bfP^1$ at the first iteration of Algorithm~\ref{alg:partitioning_algorithm}.
If the resulting lower and upper bounds $LBD$ and $UBD$ in Algorithm~\ref{alg:partitioning_algorithm} do not converge after the first iteration, we use the partitioning policy implemented within Alpine (see Section~\ref{sec:alpinepart}) to specify the partitioning matrix $\bfP^l$ from its second iteration ($l \geq 2$).
{Section~\ref{subsec:strong_part_policy} outlines the strong partitioning policy.}

We use generalized gradients of the value function of problem~\eqref{eqn:piecewise_mccormick_mip_oa} within a bundle solver for nonsmooth nonconvex optimization to solve problem~\eqref{eqn:strong_part} to local optimality.
{Although value functions of MILPs are generally discontinuous, problem~\eqref{eqn:piecewise_mccormick_mip_oa} possesses special structure because outer-approximations of the {\pmc} relaxations $\mathcal{PMR}^B_{ij}(\bfP_i, \bfP_j)$ and $\mathcal{PMR}^{Q}_{k}(\bfP_k)$ can be described using nonconvex piecewise-linear continuous functions (cf.\ Figure~\ref{fig:piecewise_mccormick}). 
As shown in Theorems~\ref{thm:smooth_case} and~\ref{thm:nonsmooth_case} in Section~\ref{subsec:genlgrad}, this structure allows us to compute sensitivity information for the value function~$\underline{v}$.}

We use the bundle solver MPBNGC, which requires function and generalized gradient evaluations at points \mbox{$\bfP \in \mathcal{P}_d$} during its algorithm~\citep[see][for details]{makela2003multiobjective}.
{Each function evaluation $\underline{v}(\bfP)$ necessitates solving an MILP, \eqref{eqn:piecewise_mccormick_mip_oa}.
Under suitable assumptions, a generalized gradient $\partial \underline{v}(\bfP)$ can be obtained by fixing variables~$\bfY$ to an optimal $\bfY$-solution of~\eqref{eqn:piecewise_mccormick_mip_oa} and computing a generalized gradient of the resulting LP, \eqref{eqn:piecewise_mccormick_mip_abstract_inner}.
We formalize these details in Section~\ref{subsec:genlgrad}.}
Section S.1 of the supplemental material summarizes the convergence guarantees of MPBNGC for completeness.

The example below shows that the value function $\underline{v}$ of problem~\eqref{eqn:piecewise_mccormick_mip_oa} may be nonsmooth.

\begin{example}
\label{exm:nonsmooth_value_fn}
Consider the following instance of the QCQP~\eqref{eqn:orig_qcqp}:
\begin{align*}
\min_{x \in [0,1]} \:\: & x \:\: \text{ s.t. } \:\: x^2 \geq (0.4)^2.
\end{align*}
Its optimal solution is $x^* = 0.4$ with optimal value $v^* = 0.4$.
Suppose we wish to partition the domain of $x$ into two sub-intervals ($d = 1$).
Let $\bfP = (0, p, 1)$ denote partitioning points for $x$ with $0 \leq p \leq 1$.
After some algebraic manipulation, the outer-approximation problem~\eqref{eqn:piecewise_mccormick_mip_oa} can be equivalently written as:
\begin{align*}
\underline{v}(p) = \uset{x \in [0,1]}{\min} \: x \:\: \text{s.t.} \:\: w \geq (0.4)^2, \:\: w \leq \max\{px, (1+p)x - p\}, \:\: w \geq 2\alpha_j x - \alpha^2_j, \: \forall j \in \mathcal{J},
\end{align*}
where $\{\alpha_j\}_{j \in \mathcal{J}} \subset [0,1]$
and we write $\underline{v}(p)$ to only indicate the dependence of the value function $\underline{v}$ on the single nontrivial partitioning point~$p$.
We can derive the {\pmc} lower bound to be:

\begin{minipage}[t]{0.4\textwidth}
\vspace*{-1in}
\[
\underline{v}(p) = 
\begin{cases}
\frac{0.16+p}{1+p}, & \text{if $0 \leq p \leq 0.4$}\\
\frac{0.16}{p}, & \text{if $0.4 < p \leq 1$}
\end{cases},
\]
\end{minipage}
\hfill
\begin{minipage}[t]{0.5\textwidth}
\centering
\begin{tikzpicture}[scale=4]
\draw[->] (-0.2, 0) -- (1.2, 0) node[right] {$p$};
\draw[->] (0, -0.1) -- (0, 0.5) node[above] {$\underline{v}(p)$};
\draw[domain=0:0.4, smooth, thick, variable=\x, blue] plot ({\x}, {(0.16 + \x)/(1 + \x)});
\draw[domain=0.4:1, smooth, thick, variable=\x, blue] plot ({\x}, {0.16/\x});
\draw[densely dotted] (0.4, 0.4) -- (0.4, 0);
\draw[densely dotted] (0.4, 0.4) -- (0, 0.4);
\node at (-0.04,0) [below] {$0$};
\node at (0.4,0) [below] {$0.4$};
\node at (1,0) [below] {$1$};
\node at (0,0.16) [left] {$0.16$};
\node at (0,0.4) [left] {$0.4$};
\end{tikzpicture}
\end{minipage}

\noindent which shows that $\underline{v}$ is continuous and piecewise differentiable at $p = 0.4$ for this example.
\end{example}

\subsection{Computing a generalized gradient of \texorpdfstring{$\underline{v}$}{the value function}}
\label{subsec:genlgrad}

We identify conditions under which a generalized gradient of the value function $\underline{v}$ can be computed in practice. (For notational simplicity, we discuss generalized gradients of $\underline{v}$ on $\mathcal{P}_d$ rather than on $\text{ri}(\mathcal{P}_d)$.)
Omitted proofs are provided in Section S.2 of the supplemental material.

Our first result implies the active partitions of the variables $x_i$, $i \in \mathcal{NC}$, in problem~\eqref{eqn:piecewise_mccormick_mip_oa} for a given value of~$\bfP$ remain active for all partitioning matrices in a sufficiently small relative neighborhood of $\bfP$. 
Its assumption that the $\bfY$-solution to problem~\eqref{eqn:piecewise_mccormick_mip_oa} is unique can be checked by adding a ``no-good cut'' and re-solving~\eqref{eqn:piecewise_mccormick_mip_oa} to verify the second-best solution for $\bfY$ has a strictly greater objective than~$\underline{v}(\bfP)$.

\begin{lemma}
\label{lem:value_fn_active_part}
Fix $\bfP \in \mathcal{P}_d$.
Suppose problem~\eqref{eqn:piecewise_mccormick_mip_oa} has a unique $\bfY$-solution~$\bfY^* \in \mathcal{Y}$ and $v(\cdot,\bfY^*)$ is continuous at $\bfP$.
Then $\underline{v}(\tilde{\bfP}) = v(\tilde{\bfP},\bfY^*)$, $\forall \tilde{\bfP} \in \mathcal{P}_d$ in a sufficiently small relative neighborhood of~$\bfP$.
\end{lemma}

{The next result characterizes the gradient of the value function $\underline{v}$ with respect to the ``unfixed'' partitioning points $P_{i2}, \dots, P_{i(d+1)}$, $i \in \mathcal{NC}$.
It assumes that problem~\eqref{eqn:piecewise_mccormick_mip_abstract_inner}, with $\bfY$ fixed to the $\bfY$-solution $\bfY^*$ of problem~\eqref{eqn:piecewise_mccormick_mip_oa}, has unique primal and dual optimal solutions.}

\begin{theorem}
\label{thm:smooth_case}
Suppose $\bfP \in \mathcal{P}_d$ and problem~\eqref{eqn:piecewise_mccormick_mip_oa} has a unique $\bfY$-solution $\bfY^* \in \mathcal{Y}$.
Consider problem~\eqref{eqn:piecewise_mccormick_mip_abstract_inner} with $\bfY$ fixed to $\bfY^*$.
If this LP has a unique primal solution $\bfz^*$ and a unique dual solution $\bfpi^*$, then
\[
\frac{\partial \underline{v}}{\partial P_{ij}}(\bfP) = \frac{\partial v}{\partial P_{ij}}(\bfP,\bfY^*) = \sum_{k=1}^{n_r} \pi^*_k \bigg[ \bigg( \sum_{l=1}^{n_c} z^*_l \frac{\partial M_{kl}}{\partial P_{ij}}(\bfP) \bigg) - \frac{\partial d_{k}}{\partial P_{ij}}(\bfP) \bigg], \quad \forall i \in \mathcal{NC}, \: j \in \{2,\dots,d+1\}.
\]
\end{theorem}

{Next, we derive a formula for the Clarke generalized gradient $\partial \underline{v}(\bfP)$ when the assumption in Theorem~\ref{thm:smooth_case} that the LP~\eqref{eqn:piecewise_mccormick_mip_abstract_inner}, with $\bfY$ fixed to $\bfY^*$, has unique primal and dual solutions does \textit{not} hold.}
Note that $\partial \underline{v}(\bfP)$, $\partial v(\bfP,\bfY^*)$, $\frac{\partial M_{kl}}{\partial \bfP}(\bfP)$, and $\frac{\partial d_{k}}{\partial \bfP}(\bfP)$ denote generalized gradients only with respect to the ``unfixed'' partitioning points $P_{i2}, \dots, P_{i(d+1)}$, $i \in \mathcal{NC}$.

\begin{theorem}
\label{thm:nonsmooth_case}
Suppose $\bfP \in \mathcal{P}_d$ and problem~\eqref{eqn:piecewise_mccormick_mip_oa} has a unique $\bfY$-solution $\bfY^* \in \mathcal{Y}$.
Consider problem~\eqref{eqn:piecewise_mccormick_mip_abstract_inner} with $\bfY$ fixed to $\bfY^*$.
Suppose $v(\cdot,\bfY^*)$ is finite and locally Lipschitz in a sufficiently small relative neighborhood of $\bfP$.
Then
\begin{align*}
\hspace*{-0.25in} \partial \underline{v}(\bfP) &= \partial v(\bfP,\bfY^*) = \textup{conv}\biggl(\biggl\{\sum_{k=1}^{n_r} \pi^*_k \bigg[ \bigg( \sum_{l=1}^{n_c} z^*_l \frac{\partial M_{kl}}{\partial \bfP}(\bfP) \bigg) - \frac{\partial d_{k}}{\partial \bfP}(\bfP) \bigg] \: : \: (\bfz^*,\bfpi^*) \textup{ is a primal-dual } \\
&\hspace*{2.5in} \textup{optimal pair for } \eqref{eqn:piecewise_mccormick_mip_abstract_inner} \textup{ with } \bfY \textup{ fixed to } \bfY^*\biggr\}\biggr).
\end{align*}
\end{theorem}

{The next result ensures that $v(\cdot,\bfY^*)$ is locally Lipschitz in a sufficiently small relative neighborhood of $\bfP \in \mathcal{P}_d$, provided that the matrix $\bfM(\bfP)$ in problem~\eqref{eqn:piecewise_mccormick_mip_abstract_inner} has full row rank and Slater's condition holds.}
(See~\citet{im2018sensitivity} and Assumption~5.1 of~\citet{de2021generalized} for details.)

\begin{lemma}
\label{lem:genl_grad_assumptions}
Suppose $\bfP \in \mathcal{P}_d$ and $\bar{\bfY} \in \mathcal{Y}$.
Consider problem~\eqref{eqn:piecewise_mccormick_mip_abstract_inner} with $\bfY$ fixed to $\bar{\bfY}$.
If the matrix $\bfM(\bfP)$ has full row rank and 
$\bfd(\bfP) \in \textup{int}\bigl( \{ \bfM(\bfP)\bfz : \bfz \geq \bfzero, \bar{\bfM} \bfz = \bar{\bfB}\: \text{vec}(\bar{\bfY}) \}  \bigr)$, 
then $v(\cdot,\bar{\bfY})$ is finite and locally Lipschitz in a sufficiently small relative neighborhood of $\bfP$.
\end{lemma}
\proof{Proof.}
See Proposition~5.3 of~\cite{de2021generalized} and pages~73 to~76 of~\cite{im2018sensitivity}.
\Halmos
\endproof

We now verify that the full rank assumption in Lemma~\ref{lem:genl_grad_assumptions} holds in general.

\begin{lemma}
\label{lem:fullrank}
The matrix $\bfM(\bfP)$ has full row rank for each $\bfP \in \textup{ri}(\mathcal{P}_d)$.
\end{lemma}

The following is the main result of this section.
It shows that for almost every $\bfP \in \mathcal{P}_d$ with respect to the uniform measure, problem~\eqref{eqn:piecewise_mccormick_mip_oa} either has a unique $\bfY$-solution (allowing us to use Theorem~\ref{thm:smooth_case} or~\ref{thm:nonsmooth_case} to compute a generalized gradient of $\underline{v}$ under mild conditions), or $\underline{v}(\bfP) = v^*$ (i.e., the optimal values of~\eqref{eqn:qcqp} and~\eqref{eqn:piecewise_mccormick_mip_oa} are equal, implying the partitioning matrix $\bfP$ is sufficient for the convergence of the lower bound), or both statements hold.

\begin{theorem}
\label{thm:main_thm}
At least one of the following statements holds for almost every $\bfP \in \mathcal{P}_d$:
\begin{enumerate}
\item $\underline{v}(\bfP) = v^*$,

\item Problem~\eqref{eqn:piecewise_mccormick_mip_oa} has a unique $\bfY$-solution.
\end{enumerate}
\end{theorem}
\proof{Proof.}
Consider $\bfP \in \mathcal{P}_d$, and let $\hat{\bfY} \in \mathcal{Y}$ and $\hat{\bfx} \in [0,1]^n$ denote the $\bfY$ and $\bfx$ components of an optimal solution to problem~\eqref{eqn:piecewise_mccormick_mip_oa}.
We examine the following cases:
\begin{enumerate}[label=(\alph*)]
\item There exists an optimal solution $\hat{\bfx}$ such that $\forall k \in \mathcal{Q}$, $\hat{x}_k \in \{P_{k1}, P_{k2}, \dots, P_{k(d+1)}, P_{k(d+2)}\}$, and $\forall (i,j) \in \mathcal{B}$, either $\hat{x}_i \in \{P_{i1}, P_{i2}, \dots, P_{i(d+1)}, P_{i(d+2)}\}$, or $\hat{x}_j \in \{P_{j1}, P_{j2}, \dots, P_{j(d+1)}, P_{j(d+2)}\}$, or both.

\item Case (a) does not hold for \textit{any} optimal solution $\hat{\bfx}$, i.e., there either exists at least one index $k \in \mathcal{Q}$ such that $\hat{x}_k \not\in \{P_{k1}, P_{k2}, \dots, P_{k(d+1)}, P_{k(d+2)}\}$, or there exists at least one pair of indices $(i,j) \in \mathcal{B}$ such that both $\hat{x}_i \not\in \{P_{i1}, P_{i2}, \dots, P_{i(d+1)}, P_{i(d+2)}\}$ and $\hat{x}_j \not\in \{P_{j1}, P_{j2}, \dots, P_{j(d+1)}, P_{j(d+2)}\}$.
\end{enumerate}

Suppose case (a) holds.
Since we assume that $\{P_{k1}, P_{k2}, \dots, P_{k(d+1)}, P_{k(d+2)}\} \subset \{\alpha^k_j\}$ for each $k \in \mathcal{Q}$, our outer-approximation of the piecewise McCormick relaxation \eqref{eqn:lambda_form_quadratic_conv_comb}-\eqref{eqn:lambda_form_quadratic_active} for the constraint $W_{kk} = x^2_k$ is exact (i.e., there is no relaxation gap) at the partitioning points $x_k \in \{P_{k1}, P_{k2}, \dots, P_{k(d+1)}, P_{k(d+2)}\}$.
Additionally, the piecewise McCormick relaxation \eqref{eqn:lambda_form_bilinear_conv_comb}-\eqref{eqn:lambda_form_bilinear_active2} for the constraint $W_{ij} = x_i x_j$ is exact either when $x_i \in \{P_{i1}, P_{i2}, \dots, P_{i(d+1)}, P_{i(d+2)}\}$, or when $x_j \in \{P_{j1}, P_{j2}, \dots, P_{j(d+1)}, P_{j(d+2)}\}$, or both.
Therefore, the point $\hat{\bfx}$ is feasible to the original QCQP~\eqref{eqn:orig_qcqp}, which implies $\underline{v}(\bfP) = v^*$.

Suppose instead that case (b) holds.
Additionally, suppose there are multiple $\bfY$-solutions to~\eqref{eqn:piecewise_mccormick_mip_oa}.
Let $\tilde{\bfY} \in \mathcal{Y}$ and $\tilde{\bfx} \in [0,1]^n$ denote the $\bfY$ and $\bfx$ components of \textit{another} optimal solution to problem~\eqref{eqn:piecewise_mccormick_mip_oa} with $\tilde{\bfY} \neq \hat{\bfY}$.
Since case (a) does not hold and $\tilde{\bfY} \neq \hat{\bfY}$, we have $\tilde{\bfx} \neq \hat{\bfx}$.
Moreover, there exist nonsingular basis matrices $\hat{\bfM}(\bfP)$ and $\tilde{\bfM}(\bfP)$ for the LPs~\eqref{eqn:piecewise_mccormick_mip_abstract_inner} corresponding to $\hat{\bfY}$ and $\tilde{\bfY}$, respectively, such that 
\begin{align}
\label{eqn:polynomial_eqn}
\underline{v}(\bfP) = v(\bfP,\hat{\bfY}) = \tr{\hat{\bfc}} [\hat{\bfM}(\bfP)]^{-1} \begin{pmatrix}
\bfd(\bfP) \\
\bar{\bfB}\: \text{vec}(\hat{\bfY})
\end{pmatrix} 
= \tr{\tilde{\bfc}} [\tilde{\bfM}(\bfP)]^{-1}
\begin{pmatrix}
\bfd(\bfP) \\
\bar{\bfB}\: \text{vec}(\tilde{\bfY})
\end{pmatrix}
= v(\bfP,\tilde{\bfY})
\end{align}
for suitable vectors $\tilde{\bfc}$ and $\hat{\bfc}$, which include only the components of $\bfc$ corresponding to the basic variables of these LPs.
Since not all components of $\hat{\bfx}$ equal $0$ or $1$, at least some of the entries of $\hat{\bfM}(\bfP)$ are functions of the partitioning points~$\bfP$.
Moreover, $v(\bfP,\hat{\bfY})$ and $v(\bfP,\tilde{\bfY})$ are not identical functions of $\bfP$ since $\tilde{\bfx} \neq \hat{\bfx}$.
Therefore, equation~\eqref{eqn:polynomial_eqn} yields a polynomial equation in~$\bfP$, which implies that the set of all $\bfP \in \mathcal{P}_d$ for which equation~\eqref{eqn:polynomial_eqn} holds has measure zero.
Noting that $\abs{\mathcal{Y}} < +\infty$ and the number of possible bases is finite for each $\bfY \in \mathcal{Y}$ concludes the proof.
\Halmos
\endproof

\begin{algorithm}[t]
\caption{{Strong partitioning (SP) policy for the first iteration}}
\label{alg:enhancements}
{
\begin{algorithmic}[1]

\vspace*{0.05in}
\State \textbf{Input:} maximum number $d$ of partitioning points to be added per variable (excluding variable bounds).

\vspace*{0.05in}
\Statex \textbf{Preprocessing steps}

\State \textbf{Initialization:} partitioning vectors $\hat{\bfP}^0_i := (0, 1)$, $\forall i \in \mathcal{NC}$. \label{algo:preprocess_begin}

\For{$k = 1, 2, \dots, d$}

\State Solve problem~\eqref{eqn:piecewise_mccormick_mip_oa} with the partitioning matrix $\hat{\bfP}^{k-1}$. Let $\hat{\bfx}^{k-1}$ denote an $\bfx$-solution.

\For{$i \in \mathcal{NC}$} 

\If{$\hat{x}^{k-1}_i \approx \tilde{x}_i$ for some partitioning point $\tilde{x}_i$ in $\hat{\bfP}^{k-1}_i$}

\State Set $\hat{\bfP}^k_i = \hat{\bfP}^{k-1}_i$.

\Else

\State Insert $\hat{x}^{k-1}_i$ within $\hat{\bfP}^{k-1}_i$ to obtain a vector $\hat{\bfP}^k_i$ satisfying $\hat{P}^k_{ij} \leq \hat{P}^k_{i(j+1)}$, $\forall j \in [\textup{dim}(\hat{\bfP}^k_i) - 1]$.

\EndIf

\EndFor

\EndFor

% \vspace*{0.03in}

\State For each $i \in \mathcal{NC}$, let $n_i := \textup{dim}(\hat{\bfP}^d_i) - 2$ and set $P^0_{ij} := \begin{cases}
0, & \text{if } j \in [d - n_i] \\
\hat{P}^d_{i(j-d+n_i)}, & \text{if } j \in \{d+1 - n_i, \dots, d+2\}
\end{cases}$.

\vspace*{0.04in}
\State For each $i \in \mathcal{NC}$, fix variables $P_{i1}, \dots, P_{i(d+1 - n_i)}$ to $0$ and variables $P_{i(d+2)}$ to $1$ while solving~\eqref{eqn:strong_part}.

\State \textbf{Preprocessing output:} initial guess (and variable fixings) $\bfP^0$ for problem~\eqref{eqn:strong_part}. \label{algo:preprocess_end}

\vspace*{0.07in}

\Statex \textbf{Solving the strong partitioning problem~\eqref{eqn:strong_part}}

\State Solve the max-min problem~\eqref{eqn:strong_part} using the initial guess (and variable fixings) $\bfP^0$ to obtain a solution $\bfP^1 \in \mathcal{P}_d$ with objective $\bar{v} := \underline{v}(\bfP^1)$.  \label{algo:maxmin}

\vspace*{0.07in}
\noindent
\textbf{Postprocessing steps}

\For{$j = 2, 3, \dots, d+1$} \label{algo:postprocess_begin}

\For{$i \in \mathcal{NC}$}

\State Set $\bfP = \bfP^1$, fix $P_{ij} = 0$, and sort $\bfP$ such that $P_{ik} \leq P_{i(k+1)}$, $\forall k \in [d+1]$.

\State Solve problem~\eqref{eqn:piecewise_mccormick_mip_oa} with this partitioning matrix $\bfP$ to obtain a lower bound $\hat{v} := \underline{v}(\bfP)$.

\If{$\hat{v} \geq \bar{v} - 10^{-6} \abs{\bar{v}}$}

\State Update $\bfP^1 = \bfP$.

\EndIf

\EndFor

\EndFor

% \vspace*{0.05in}
\State \textbf{Postprocessing output:} partitioning matrix $\bfP^1$ used to construct {\pmc} relaxations in the first iteration of Algorithm~\ref{alg:partitioning_algorithm}. (In practice, redundant partitioning points are not added.)
\label{algo:postprocess_end}

\end{algorithmic}
}
\end{algorithm}

\subsection{{Strong partitioning policy}}
\label{subsec:strong_part_policy}

{Algorithm~\ref{alg:enhancements} details the strong partitioning policy for the first iteration.
It includes preprocessing steps to mitigate the computational burden of solving problem~\eqref{eqn:strong_part} to local optimality, and postprocessing steps to enable the ML model outlined in Section~\ref{sec:ml_approx} to more effectively imitate this policy.
The preprocessing heuristics determine an initial guess $\bfP^0$ for the solution of the max-min problem~\eqref{eqn:strong_part}. They also eliminate a subset of the {\pp} $\bfP$ by fixing them to zero, which can reduce both the per-iteration cost and the number of iterations required for the bundle solver to converge.
Despite these enhancements, we observe in our numerical experiments that solving the max-min problem~\eqref{eqn:strong_part} can still be computationally prohibitive.
Therefore, in the next section, we propose a practical off-the-shelf ML model to imitate this strong partitioning policy for homogeneous families of QCQPs.
The postprocessing heuristics in Algorithm~\ref{alg:enhancements} remove (redundant) partitioning points in the solution $\bfP^1$ of the max-min problem that do not significantly impact the piecewise McCormick relaxation lower bound.
Reducing the number of nontrivial partitioning points can enhance the ability of ML models to effectively imitate the SP policy.}

{The output partitioning matrix $\bfP^1$ of Algorithm~\ref{alg:enhancements} is used to construct {\pmc} relaxations in the first iteration of Algorithm~\ref{alg:partitioning_algorithm}.
If the lower bound $LBD$ obtained using $\bfP^1$ and the upper bound $UBD$ have not converged, the strong partitioning policy reverts to the heuristic partitioning policy implemented in Alpine (see Section~\ref{sec:alpinepart} for details) to specify the partitioning matrices $\bfP^l$ for iterations $l \geq 2$ of Algorithm~\ref{alg:partitioning_algorithm}.}

\section{{ML approximation of strong partitioning for {homogeneous} QCQPs}}
\label{sec:ml_approx}

{While strong partitioning can yield more effective {\pp} than existing heuristic partitioning methods, solving the max-min problem~\eqref{eqn:strong_part} to local optimality can be time-consuming, making its direct application within Algorithm~\ref{alg:partitioning_algorithm} impractical.
To address this, we propose using AdaBoost regression models~\citep{drucker1997improving,freund1997decision}, with regression tree base estimators~\citep{breiman2017classification}, to imitate the strong partitioning policy for {homogeneous} QCQP instances.
The trained AdaBoost models are then used to efficiently specify the partitioning matrix $\bfP^1$ for constructing {\pmc} relaxations in the first iteration of Algorithm~\ref{alg:partitioning_algorithm} for new QCQP instances from the family.
}

\begin{algorithm}[t]
\caption{{ML-based partitioning policy for the first iteration}}
\label{alg:ml_approx}
{
\begin{algorithmic}[1]

\State \textbf{Input:} maximum number $d$ of partitioning points per variable (excluding variable bounds), data $\{(\bff^l, \bfP^{1,l})\}_{l=1}^{N}$, where $\bff^l \in \R^{d_f}$ represents a feature vector and \mbox{$\bfP^{1,l} \in \mathcal{P}_d$} denotes the strong partitioning points determined using Algorithm~\ref{alg:enhancements} for the $l$th QCQP instance, number of data folds $K \in \{2,\dots,N\}$, maximum number $N_{wl}$ of weak learners for the \texttt{AdaBoostRegressor} model, and maximum depth $D_{max}$ of its \texttt{DecisionTreeRegressor} base estimator models.

\State \textbf{Preprocessing:} randomly split the set of instance indices $[N]$ into $K$ disjoint folds of (approximately) equal size. Let $\mathcal{I}_k \subset [N]$ denote the set of indices in the $k$th fold for each $k \in [K]$.

\vspace*{0.05in}
\For{$k = 1, 2, \dots, K$}

\For{$i \in \mathcal{NC}$}

\For{$j = 2, 3, \dots, d+1$}

\State Gather the training data $\mathcal{T}_{ijk} := \{(\bff^l,P^{1,l}_{ij})\}_{l\in [N] \backslash \mathcal{I}_k}$ consisting of input-output pairs

\Statex \hspace*{0.7in} corresponding to the $j$th partitioning point of variable $x_i$, omitting the $k$th data fold.

\State Train an \texttt{AdaBoostRegressor} model on $\mathcal{T}_{ijk}$ using a \texttt{DecisionTreeRegressor} 

\Statex \hspace*{0.7in} as the base estimator with maximum depth $D_{max}$, and a maximum of $N_{wl}$ weak learners. 

\Statex \hspace*{0.7in} \mbox{(All other hyperparameters for \texttt{AdaBoostRegressor} and \texttt{DecisionTreeRegressor}} 

\Statex \hspace*{0.7in}are set to their default values.)

\State Use the trained \texttt{AdaBoostRegressor} model along with the features $\{\bff^l\}_{l \in \mathcal{I}_k}$ to generate

\Statex \hspace*{0.7in} predictions $\{\hat{P}^{1,l}_{ij}\}_{l \in \mathcal{I}_k}$, with each $\hat{P}^{1,l}_{ij} \in [0,1]$, for the strong partitioning points $\{P^{1,l}_{ij}\}_{l \in \mathcal{I}_k}$

\Statex \hspace*{0.7in} for instances in the $k$th data fold.

\EndFor

\For{$l \in \mathcal{I}_k$}

\State Sort the predictions $\{\hat{P}^{1,l}_{ij}\}_{j=2}^{d+1}$ such that $\hat{P}^{1,l}_{ij} \leq \hat{P}^{1,l}_{i(j+1)}$, $\forall j \in \{2,\dots, d\}$.

\EndFor

\EndFor

\EndFor

\vspace*{0.05in}
\State \textbf{Output:} $K$-fold out-of-sample ML predictions $\{\hat{\bfP}^{1,l}\}_{l=1}^{N}$, where $\hat{\bfP}^{1,l} \in \mathcal{P}_d$ is used to construct {\pmc} relaxations in the first iteration of Algorithm~\ref{alg:partitioning_algorithm} for the $l$th QCQP instance.

\end{algorithmic}
}
\end{algorithm}

Our AdaBoost regression models sequentially train a series of regression trees, starting with an initial regression tree that fits the data. 
Each subsequent regression tree is trained by placing more emphasis on the training instances that the previous models struggled to predict accurately, achieved by adjusting the relative weights of those instances. 
The final predictions of the AdaBoost regression models are then obtained by combining the outputs of all the regression tree models using a weighted average. 
By iteratively focusing on the most challenging training instances, these models capture complex relationships between the input features and the output strong partitioning points while mitigating overfitting. 
However, a potential drawback of AdaBoost regression models is their complexity, which can make them difficult to interpret.

{Algorithm~\ref{alg:ml_approx} outlines the machine learning-based partitioning policy for the first iteration.
It takes as input a feature vector for each of $N$ QCQP instances from the family, along with the strong partitioning points determined using Algorithm~\ref{alg:enhancements} for these instances.
The algorithm begins by splitting the training data into $K$ disjoint folds.
For each fold $k \in [K]$, it excludes the feature and strong partitioning data from that fold and learns a separate AdaBoost regression model for each partitioning point $P^1_{ij}$, $i \in \mathcal{NC}$ and $j \in \{2,\dots,d+1\}$, to imitate the strong partitioning policy for specifying that partitioning point.
This trained regression model is then used to predict the strong partitioning points $\{P^{1,l}_{ij}\}_{l \in \mathcal{I}_k}$ for the instances in the $k$th fold.}

{The output partitioning matrix $\hat{\bfP}^{1,l}$ from Algorithm~\ref{alg:ml_approx} is used to construct {\pmc} relaxations at the first iteration of Algorithm~\ref{alg:partitioning_algorithm} for the $l$th QCQP instance.
Similar to the strong partitioning policy, this AdaBoost-based policy reverts to the heuristic partitioning policy implemented in Alpine (see Section~\ref{sec:alpinepart} for details) to specify the partitioning matrices from the second iteration of Algorithm~\ref{alg:partitioning_algorithm}.}

We use the following instance-specific features as inputs to our AdaBoost regression models: 
\begin{enumerate}[label=(\roman*)]
\item Parameters~$\bftheta$ that uniquely parametrize each QCQP instance in the family (see Section~\ref{subsec:test_instances} for details). 
\item The best found feasible solution during presolve, obtained via a single local solve of~\eqref{eqn:qcqp}. 
\item The McCormick lower bounding solution, obtained by solving the convex problem~\eqref{eqn:mccormick}. 
\end{enumerate}
Although it is theoretically sufficient to use only the parameters~$\bftheta$ as features, since they uniquely identify each QCQP instance in the family, we also include features (ii) and (iii) as they are relatively inexpensive to compute and intuitively help inform the partitioning strategy. 
These additional features are complex transformations of the parameters~$\bftheta$, which might be difficult to uncover otherwise.

In contrast with much of the literature on learning for MILPs, we train separate ML models for each QCQP family since both the feature and output dimensions of our AdaBoost regression models depend on the problem dimensions.
{While we plan to design more advanced ML architectures that can accommodate variable feature and output dimensions in future work, we do not view the need to train different ML models for each QCQP family to be a major limitation.
This is because decision-makers often care about solving instances of the \textit{same} underlying QCQP with only a few varying model parameters, which means they only need to train a single set of ML models with fixed feature and output dimensions for their QCQP family.}

{
Finally, we mention that we briefly explored the use of neural network models as alternatives to our AdaBoost regression models.
Although we do not provide detailed results, we found that our AdaBoost regression models significantly outperformed these simple neural networks, both in predicting the strong partitioning points and in the performance of the predicted points when used to specify the partitions in the first iteration of Algorithm~\ref{alg:partitioning_algorithm}.
We posit that AdaBoost models might perform better than neural networks for the following reasons.
First, AdaBoost models tend to identify redundant strong partitioning points more accurately.
The preprocessing and postprocessing heuristics in Algorithm~\ref{alg:enhancements} are designed to set partitioning points to zero (making them redundant) if they are not expected to significantly impact the {\pmc} relaxation lower bound.
We observe that our AdaBoost models typically predict these redundant partitioning points as zero.
Consequently, using AdaBoost predictions to inform the partitioning matrix $\bfP^1$ in Algorithm~\ref{alg:partitioning_algorithm} results in easier {\pmc} relaxation problems~\eqref{eqn:piecewise_mccormick_mip} in the first iteration, leading to significant computational speedups.
Next, the optimal solution of the max-min problem~\eqref{eqn:strong_part} is often discontinuous when viewed as a function of the QCQP instance features.
AdaBoost models may be better suited to predict this discontinuous mapping from the features of a QCQP instance to an optimal solution of~\eqref{eqn:strong_part}, particularly in cases where training data is scarce.}

{In the next section, we evaluate both the quality of our AdaBoost regression model predictions and their effectiveness when used to specify the partitioning matrix $\bfP^1$ in the first iteration of Algorithm~\ref{alg:partitioning_algorithm}.}

\section{Numerical experiments}
\label{sec:computexp}

We briefly describe the families of test instances used in our experiments in Section~\ref{subsec:test_instances}.
Section~\ref{subsec:computational_setup_and_outline} outlines our computational setup and the metrics used in our experiments.
Section~\ref{sec:alpinepart} details the partitioning policy implemented within Alpine, which serves as a benchmark for our strong partitioning and AdaBoost regression-based policies.
In Section~\ref{subsec:ml_approx}, we evaluate the effectiveness of the AdaBoost models in imitating the strong partitioning policy.
Finally, Section~\ref{subsec:results} compares the performance of Alpine's default partitioning policy with the strong partitioning and AdaBoost policies in selecting Alpine's partitioning points for the repeated solution of homogeneous QCQPs with fixed structure.
Sections S.3 to S.5 of the supplemental material provide the full methodology for generating our test instances, benchmark their difficulty using the solvers BARON and Gurobi, and present additional tables for our numerical experiments.
The data and code supporting this study are available on GitHub~\citep{kannan2025github}.

\subsection{Test instances}
\label{subsec:test_instances}

Motivated by real-world applications that require solving the same underlying QCQP with slightly varying model parameters, we evaluate our approach on three homogeneous families of nonconvex QCQPs.

\subsubsection{Random bilinear programs}
\label{subsubsec:random_bilinear}

We consider a parametric family of bilinear programs of the form:
\begin{alignat*}{2}
v(\bftheta) := &\min_{\bfx \in [0,1]^{n}} \:\: && \bfx^{\textup{T}} \bfQ^0(\bftheta) \bfx + (\bfr^0(\bftheta))^{\textup{T}} \bfx \\
&\quad\: \text{s.t.} && \bfx^{\textup{T}} \bfQ^i(\bftheta) \bfx + (\bfr^i(\bftheta))^{\textup{T}} \bfx \leq b_i, \quad \forall i \in [m_I], \\
& && (\bfa^j)^{\textup{T}} \bfx = d_j, \quad \forall j \in [m_E],
\end{alignat*}
where the matrices and vectors depend affinely on a parameter vector~$\bftheta \in [-1,1]^{d_{\theta}}$. 
We generate $1000$ random instances each for $n \in \{10, 20, 50\}$ with $\abs{\mathcal{B}} = \min\{5n, \binom{n}{2}\}$ bilinear terms, $\abs{\mathcal{Q}} = 0$ quadratic terms, $m_I = n$ bilinear inequalities, and $m_E = 0.2n$ linear equalities~\citep{bao2011semidefinite}. 
Each family of instances with a fixed dimension $n$ is constructed to have the same set of $\abs{\mathcal{B}}$ bilinear terms.

\subsubsection{Random QCQPs with bilinear \textit{and} univariate quadratic terms}
\label{subsubsec:random_qcqps}

We additionally generate $1000$ random QCQP instances each for $n \in \{10, 20, 50\}$ with $\abs{\mathcal{B}} = \min\{5n, \binom{n}{2}\}$ bilinear terms and $\abs{\mathcal{Q}} = \lfloor 0.25n \rfloor$ univariate quadratic terms. 
All instances for a fixed dimension $n$ comprise the same set of bilinear and univariate quadratic terms.
The remaining problem structure and data generation follow the same approach as in the bilinear case.

\subsubsection{The pooling problem}
\label{subsubsec:pooling_problem}

We formulate the pooling problem as a parametric family of bilinear programs of the form~\citep{luedtke2020strong}:
\begin{alignat*}{2}
v(\bftheta) := &\min_{\bfx \in [\bfzero,\mathbf{u}]} \:\: && (\bfr^0)^{\textup{T}} \bfx \\
&\quad\: \text{s.t.} && \bfx^{\textup{T}} \bfQ^i(\bftheta) \bfx + (\bfr^i(\bftheta))^{\textup{T}} \bfx \leq 0, \quad \forall i \in [m_I], \\
& && \bfx^{\textup{T}} \bfQ^i \bfx + (\bfr^i)^{\textup{T}} \bfx = 0, \quad \forall i \in \{m_I+1,\dots,m_E\}, \\
& && (\bfa^j)^{\textup{T}} \bfx \leq d_j, \quad \forall j \in [m'_I], \\
& && (\bfa^j)^{\textup{T}} \bfx = 1, \quad \forall j \in \{m'_I+1,\dots,m'_E\},
\end{alignat*}
where the parameters~$\bftheta$ correspond to input qualities.
We generate $1000$ pooling instances by perturbing the input qualities by up to 20\%.

\subsection{{Computational setup and evaluation metrics for our numerical experiments}}
\label{subsec:computational_setup_and_outline}

\subsubsection{{Computational setup}}
\label{subsubsec:computational_setup}

We evaluate the performance of different partitioning policies within Alpine (\url{https://github.com/lanl-ansi/Alpine.jl}).
We use Julia 1.6.3 and JuMP.jl~v1.1.1 to formulate our test instances and Alpine.jl v0.4.1 to solve them.
We use Gurobi 9.1.2 via Gurobi.jl v0.11.3 with $\texttt{MIPGap} = 10^{-6}$ for solving LPs, MILPs, and convex mixed-integer QCQPs within Alpine. 
We use Ipopt 3.14.4 via Ipopt.jl~v1.0.3 (with $\texttt{max\_iter} = 10^4$) to solve the random bilinear and QCQP instances in Sections~\ref{subsubsec:random_bilinear} and~\ref{subsubsec:random_qcqps} locally within Alpine.
To solve the random pooling instances in Section~\ref{subsubsec:pooling_problem} locally within Alpine, we switch to Artelys Knitro 12.4.0 via KNITRO.jl v0.13.0 (with $\texttt{algorithm} = 3$) due to Ipopt's reduced effectiveness for these instances.
Each Alpine run is assigned a time limit of $2$ hours, with target relative and absolute optimality gaps of $10^{-4}$ and $10^{-9}$. 
We deactivate bound tightening techniques within Alpine due to their ineffectiveness on our medium and large-scale instances. 
We partition the domains of all variables involved in nonconvex terms within Alpine and set its remaining options to their default values.

We evaluate the performance of the strong partitioning and ML-based policies when used to specify the partitioning matrix $\bfP^1$ in Alpine's first iteration.
Algorithm~\ref{alg:enhancements} for strong partitioning is implemented in Julia~1.6.3 and integrated within Alpine.jl v0.4.1. 
To solve problem~\eqref{eqn:strong_part} to local optimality, we use the bundle solver MPBNGC 2.0~\citep{makela2003multiobjective} via MPBNGCInterface.jl (\url{https://github.com/milzj/MPBNGCInterface.jl}), with the options \mbox{$\texttt{OPT\_LMAX} = 20$}, $\texttt{OPT\_EPS} = 10^{-9}$, and $\texttt{OPT\_NITER} = \texttt{OPT\_NFASG} = 500$.
We do not specify a time limit for Algorithm~\ref{alg:enhancements}.
We consider strong partitioning with either two or four {\pp} per partitioned variable ($d = 2$ or $d = 4$) in addition to the variable bounds.
We use scikit-learn~v0.23.2~\citep{scikit-learn} to design our AdaBoost regression-based approximation of strong partitioning, setting $N = 1000$, $K = 10$, $N_{wl} = 1000$, $D_{max} = 25$, and either $d = 2$ or $d = 4$ in Algorithm~\ref{alg:ml_approx}.
We do not carefully tune the hyperparameters in Algorithm~\ref{alg:ml_approx}, as our primary focus is on the performance of partitioning points prescribed by our AdaBoost-based policy when used within Algorithm~\ref{alg:partitioning_algorithm}.

All of our experiments were conducted 
on nodes of the Darwin cluster at LANL, which are equipped with dual socket Intel Broadwell 18-core processors (E5-2695\_v4 CPUs, base clock rate of 2.1 GHz), EDR InfiniBand, and 125 GB of memory.
Each instance was run exclusively on a single node, and different solution approaches were executed sequentially to minimize the impact of variability in machine performance.

\subsubsection{{Evaluation metrics for our numerical experiments}}
\label{subsubsec:outline_experiments}

{We assess the effectiveness of the AdaBoost regression models in imitating the strong partitioning policy by evaluating the scaled mean absolute errors (MAEs) of their out-of-sample predictions for the strong partitioning points.}
{We then compare the performance of Alpine’s default partitioning policy with the use of the strong partitioning and AdaBoost-based policies within Alpine using the following metrics:}
\begin{enumerate}[label=\roman*.]
\item \textbf{Solution times:} shifted GM, median, minimum, and maximum solution times, as well as statistics on the speedup or slowdown relative to the performance of Alpine's default partitioning policy.
\item \textbf{Effective optimality gaps:} GM, median, minimum, and maximum effective optimality gap after Alpine's first iteration, along with the percentage of instances for which the effective optimality gap reaches the minimum value of $10^{-4}$ after the first iteration.
\item \textbf{Number of instances solved:} the number of instances solved within the time limit and the GM of the residual optimality gap for the unsolved instances.
\end{enumerate}
The shifted geometric mean of a positive vector $t$ of solution times (in seconds) is defined as:
\[
\text{Shifted GM}(t) = \exp\Bigl(\frac{1}{N}\sum_{i=1}^{N} \ln(t_i + 10) \Bigr) - 10.
\]
The residual optimality gap for an unsolved instance is defined as:
\[
\textup{TLE Gap } = \frac{\text{UB} - \text{LB}}{10^{-6} + \abs{\text{UB}}},
\]
where $\text{UB}$ and $\text{LB}$ are the upper and lower bounds on the optimal value returned by the solver at termination.

We define the \textit{effective} relative optimality gap as:
\begin{align}
\label{eqn:effective_gap}
\text{Effective Optimality Gap } &= \max \biggl\{ 10^{-4}, \frac{v^* - v^{\text{LBD}}}{10^{-6} + \abs{v^*}} \biggr\},
\end{align}
where $v^*$ is the optimal objective value, $v^{\text{LBD}}$ is Alpine's lower bound after one iteration, and $10^{-4}$ is the target relative optimality gap.
By measuring the gap of $v^{\text{LBD}}$ relative to the optimal value $v^*$ instead of the best found feasible solution, we do not let the performance of the local solver impact our evaluation of the different partitioning policies.
Thresholding this optimality gap at $10^{-4}$ also lends equal importance to all optimality gaps less than this target since all such gaps are sufficient for Alpine's convergence.

We also plot solution profiles comparing the performance of different partitioning policies within Alpine, and histograms showing the reduction in effective optimality gaps achieved by using the strong partitioning and ML-based policies relative to Alpine's default partitioning policy.
We do not include performance profiles due to their known issues (see \url{http://plato.asu.edu/bench.html}).

\subsection{{Alpine's partitioning policy}}
\label{sec:alpinepart}

Algorithm~\ref{alg:adaptive_partitioning} outlines the partitioning policy implemented within Alpine~\citep{nagarajan2019adaptive}.
It adds up to two {\pp} to the active partition for each variable $x_i$, $i \in \mathcal{NC}$, around a reference point~$\bar{x}^{l-1}_i$.
For the first iteration, $\bfP^0_i := (0,1)$, $\forall i \in \mathcal{NC}$, and the reference point $\bar{\bfx}^0$ is set either to a feasible local solution from presolve, if one is found, or to a solution to the termwise McCormick relaxation~\eqref{eqn:mccormick} otherwise.
In subsequent iterations ($l > 1$), $\bar{\bfx}^{l-1}$ is specified as the $\bfx$-component of a solution to the {\pmc} relaxation problem~\eqref{eqn:piecewise_mccormick_mip} at iteration $l-1$, constructed using the partitioning matrix $\bfP^{l-1}$.
The parameter~$\Delta$ (default value $= 10$) is a dimensionless scaling factor for the size of the partition constructed around the reference point~$\bar{\bfx}^{l-1}$.
A larger value of $\Delta$ results in a narrower partition around $\bar{\bfx}^{l-1}$.

This policy empirically performs well on Alpine's test library and is motivated by the observation that uniformly partitioning variable domains~\citep[as proposed in][]{castro2016normalized,saif2008global,wicaksono2008piecewise} often creates many partitions that do not significantly improve the {\pmc} relaxation lower bound. 
However, it relies on heuristic choices that could be improved.
For example, it uses the \textit{same} parameter $\Delta$ to partition the domains of \textit{all} variables and only considers symmetric partition refinements around the point~$\bar{\bfx}^{l-1}$.
Additionally, the quality of its partitioning points $\bfP^1$ in the first iteration depends on the quality of the feasible solution found during presolve, with suboptimal presolve solutions potentially leading to slow convergence.
Numerical experiments in Section~\ref{subsec:results} demonstrate how strong partitioning and its ML approximation effectively address these limitations of Alpine's partitioning policy.

\begin{algorithm}[t]
\caption{{Alpine's adaptive partition refinement policy for iteration $l$}}
\label{alg:adaptive_partitioning}
{
\begin{algorithmic}[1]

\State \textbf{Input:} partitioning matrix $\bfP^{l-1} \in [0,1]^{\abs{\mathcal{NC}} \times 2l}$ used to construct {\pmc} relaxations at iteration $l-1$, index $\mathcal{A}(i,l-1) \in [2l - 1]$ of an active partition for variable $x_i$ at iteration $l-1$, $\forall i \in \mathcal{NC}$, a reference point $\bar{\bfx}^{l-1}$ within the active partition, i.e., with $\bar{x}^{l-1}_i \in [P^{l-1}_{i(\mathcal{A}(i,l-1))}, P^{l-1}_{i(\mathcal{A}(i,l-1) + 1)}]$, $\forall i \in \mathcal{NC}$, and parameter $\Delta \geq 4$.

\State \textbf{Initialization:} set $\bfP^l = \bfP^{l-1}$.

\vspace*{0.05in}
\For{$i \in \mathcal{NC}$}

\State Let $\text{width}(\mathcal{A}(i,l-1)) := P^{l-1}_{i(\mathcal{A}(i,l-1) + 1)} - P^{l-1}_{i(\mathcal{A}(i,l-1))}$. Add two partitioning points $\underline{p}^l_i$ and $\bar{p}^l_i$ to the

\Statex \hspace*{0.2in} active partition $[P^{l-1}_{i(\mathcal{A}(i,l-1))}, P^{l-1}_{i(\mathcal{A}(i,l-1) + 1)}]$ for variable $x_i$ as follows:
\begin{align*}
\bfP^l_i &:= \biggl( P^{l-1}_{i1}, \dots, P^{l-1}_{i(\mathcal{A}(i,l-1))}, \: \underline{p}^l_i, \: \bar{p}^l_i, \dots, P^{l-1}_{i(2l)} \biggr), \quad \text{where} \\
\underline{p}^l_i &:= \max\biggl\{P^{l-1}_{i(\mathcal{A}(i,l-1))}, \: \bar{x}^{l-1}_i - \frac{\text{width}(\mathcal{A}(i,l-1))}{\Delta}\biggr\}, \\
\bar{p}^l_i &:= \min\biggl\{P^{l-1}_{i(\mathcal{A}(i,l-1) + 1)}, \: \bar{x}^{l-1}_i + \frac{\text{width}(\mathcal{A}(i,l-1))}{\Delta}\biggr\}.
\end{align*}

\Statex \hspace*{0.2in} {\big(In practice, the partitioning points $\underline{p}^l_i$ and $\bar{p}^l_i$ are not added to $\bfP^l_i$ if $\bar{x}^{l-1}_i - \frac{\text{width}(\mathcal{A}(i,l-1))}{\Delta} \leq P^{l-1}_{i(\mathcal{A}(i,l-1))}$}

\Statex \hspace*{0.3in} {or $\bar{x}^{l-1}_i + \frac{\text{width}(\mathcal{A}(i,l-1))}{\Delta} \geq P^{l-1}_{i(\mathcal{A}(i,l-1)+1)}$, respectively.\big)}

\EndFor

\vspace*{0.05in}
\State \textbf{Output:} partitioning matrix $\bfP^l$ used to construct {\pmc} relaxations at iteration $l$.

\end{algorithmic}
}
\end{algorithm}

\begin{table}[t]
\centering
\caption{Statistics on the scaled mean absolute errors (MAEs) of the out-of-sample predictions of the AdaBoost regression models when they are trained to predict $d = 2$ strong partitioning points per variable.}
\begin{tabular}{ r | c c c c c }
\hline
Scaled MAE & $<0.01$ & $<0.02$ & $<0.05$ & $<0.1$ & $<0.2$ \\ 
& \multicolumn{5}{c}{\% of the $2\abs{\mathcal{NC}}$ Partitioning Points} \\ \hline
Bilinear, $n = 10$ &  60  &  75   &  80   &  95  &  100  \\
Bilinear, $n = 20$ &  15  &  22.5   &  60  &  87.5  &  97.5  \\
Bilinear, $n = 50$ &  31  &  39   &  70  &  94  &  100  \\[0.05in]
QCQP, $n = 10$ &  65  &  80   &  95   &  100  &  100  \\
QCQP, $n = 20$ &  35  &  37.5   &  77.5  &  92.5  &  100  \\
QCQP, $n = 50$ &  56  &  66   &  85  &  99  &  100  \\[0.05in]
Pooling           &  65.7  &  70.9   &  78.6   &  89.5  &  97.2  \\ \hline
\end{tabular}
\label{tab:scaled_maes}
\end{table}

\subsection{{Effectiveness of the ML model in imitating the strong partitioning policy {for homogeneous QCQPs}}}
\label{subsec:ml_approx}

We evaluate the effectiveness of our AdaBoost regression models in imitating the strong partitioning policy for the different QCQP families.
We do not extensively tune the hyperparameters of our AdaBoost models, as our primary focus is on the performance of the partitioning points they prescribe when used within Algorithm~\ref{alg:partitioning_algorithm}.
Section~\ref{subsubsec:computational_setup} details the parameter settings used in Algorithm~\ref{alg:ml_approx}.

Table~\ref{tab:scaled_maes} presents statistics on the scaled mean absolute errors (MAEs) of the out-of-sample predictions for the $2\abs{\mathcal{NC}}$ {\pp} generated by the AdaBoost models.
These models are trained using $10$-fold cross-validation ($K = 10$) to predict $d = 2$ strong partitioning points per variable.
The MAEs are averaged over $1000$ instances in each family and are scaled by the upper bounds of the corresponding $\bfx$ variables (which are equal to one for the random bilinear and QCQP instances).
Approximately $90\%$ or more of the {\pp} predicted by the AdaBoost models have a scaled MAE of less than $10\%$ for each problem family, indicating that this ML framework effectively and efficiently imitates strong partitioning across different problem families.

\begin{table}[t]
\centering
\caption{
Statistics on solution times. 
Columns correspond to the shifted geometric mean, median, minimum, and maximum times over the subset of 1000 instances that did not hit the {2 hour} time limit. 
{The times for Alpine+SP2 and Alpine+SP4 do not include the time for running Algorithm~\ref{alg:enhancements} to determine strong partitioning points.}
{The times for Alpine+ML2 and Alpine+ML4 include the time required to generate and sort the ML predictions in Algorithm~\ref{alg:ml_approx}.}
{The last two columns denote the number of instances for which each method hits the time limit and the corresponding geometric mean of residual optimality gaps at termination, respectively.}
}
% \resizebox{0.9\columnwidth}{!}{%
\begin{tabular}{ c | c | c c c c | c c }
\hline
Problem Family & Solution Method & \multicolumn{4}{c|}{Solution Time (seconds)} & \\
& & Shifted GM & Median & Min & Max & \# TLE & TLE Gap (GM) \\ \hline
&  Alpine (default)  &  0.51  &  0.47  &  0.14  &  2.41  &  0  &  $\underline{\hspace{0.3cm}}$  \\
Bilinear, $n = 10$  &  Alpine+SP2  &  0.11  &  0.10  &  0.06  &  0.28  &  0  &  $\underline{\hspace{0.3cm}}$  \\
&  Alpine+ML2  &  0.15  &  0.10  &  0.06  &  1.64  &  0  &  $\underline{\hspace{0.3cm}}$  \\[0.1in]
&  Alpine (default)  &  21.4  &  21.9  &  5.1  &  161.5  &  0  &  $\underline{\hspace{0.3cm}}$  \\
&  Alpine+SP2  &  4.2  &  2.0  &  0.8  &  132.6  &  0  &  $\underline{\hspace{0.3cm}}$  \\
Bilinear, $n = 20$  &  Alpine+ML2  &  10.0  &  7.8  &  1.1  &  116.0  &  0  &  $\underline{\hspace{0.3cm}}$  \\
&  Alpine+SP4  &  2.4  &  1.9  &  0.8  &  94.2  &  0  &  $\underline{\hspace{0.3cm}}$  \\
&  Alpine+ML4  &  9.3  &  7.2  &  1.0  &  117.4  &  0  &  $\underline{\hspace{0.3cm}}$  \\[0.1in]
&  Alpine (default)  &  405.9  &  336.2  &  48.0  &  7135.9  &  24  &  $4.4 \times 10^{-4}$  \\
Bilinear, $n = 50$  &  Alpine+SP2  &  52.8  &  34.9  &  4.2  &  5705.1  & 4  &  $1.6 \times 10^{-4}$  \\
&  Alpine+ML2  &  101.6  &  83.6  &  6.6  &  7071.7  &  5  &  $1.8 \times 10^{-4}$  \\[0.1in]
&  Alpine (default)  &  0.85  &  0.81  &  0.62  &  2.29  &  0  &  $\underline{\hspace{0.3cm}}$  \\
QCQP, $n = 10$  &  Alpine+SP2  &  0.10  &  0.09  &  0.07  &  0.27  &  0  &  $\underline{\hspace{0.3cm}}$  \\
&  Alpine+ML2  &  0.27  &  0.12  &  0.07  &  2.89  &  0  &  $\underline{\hspace{0.3cm}}$  \\[0.1in]
&  Alpine (default)  &  40.1  &  35.6  &  4.6  &  241.1  &  0  &  $\underline{\hspace{0.3cm}}$  \\
&  Alpine+SP2  &  7.7  &  1.7  &  0.8  &  135.4  &  0  &  $\underline{\hspace{0.3cm}}$  \\
QCQP, $n = 20$  &  Alpine+ML2  &  13.0  &  9.5  &  1.0  &  180.1  &  0  &  $\underline{\hspace{0.3cm}}$  \\
&  Alpine+SP4  &  2.4  &  1.5  &  0.7  &  125.7  &  0  &  $\underline{\hspace{0.3cm}}$  \\
&  Alpine+ML4  &  9.4  &  6.4  &  0.9  &  101.2  &  0  &  $\underline{\hspace{0.3cm}}$  \\[0.1in]
&  Alpine (default)  &  391.5  &  289.1  &  36.6  &  7198.2  &  0  &  $\underline{\hspace{0.3cm}}$  \\
QCQP, $n = 50$  &  Alpine+SP2  &  63.3  &  51.9  &  4.2  &  6055.2  &  0  &  $\underline{\hspace{0.3cm}}$ \\
&  Alpine+ML2  &  100.5  &  118.2  &  5.3  &  6514.2  &  0  &  $\underline{\hspace{0.3cm}}$  \\[0.1in]
&  Alpine (default)  &  242.8  &  212.5  &  25.9  &  7091.9  &  7  &  $2.9 \times 10^{-4}$  \\
Pooling  &  Alpine+SP2  &  66.7  &  49.7  &  1.6  &  6127.1  & 5  &  $2.1 \times 10^{-4}$  \\
&  Alpine+ML2  &  117.1  &  101.9  &  11.4  &  6097.0  &  1  &  $2.8 \times 10^{-4}$  \\ \hline
\end{tabular}
% }
\label{tab:alpine_times}
\end{table}

\subsection{{Evaluating the performance of strong partitioning and ML-based policies in Alpine}}
\label{subsec:results}

We compare the performance of Alpine's default partitioning policy with the strong partitioning (Alpine+SP2) and AdaBoost-based (Alpine+ML2) policies, which select two partitioning points per variable in each iteration of Alpine, using the metrics detailed in Section~\ref{subsubsec:outline_experiments}.
For the bilinear $n = 20$ and QCQP $n = 20$ families, we also compare these policies with the strong partitioning (Alpine+SP4) and AdaBoost-based (Alpine+ML4) policies that select four partitioning points per variable in Alpine’s first iteration.
{We reiterate that the strong partitioning and AdaBoost policies differ from Alpine's default partitioning policy only during the first iteration of Algorithm~\ref{alg:partitioning_algorithm}.}
{All reported times for Alpine with the strong partitioning and AdaBoost policies exclude the time required to run Algorithm~\ref{alg:enhancements} and Algorithm~\ref{alg:ml_approx}.}

Table~\ref{tab:alpine_times} presents statistics on the run times of Alpine with the default, strong partitioning, and AdaBoost-based partitioning policies across the different QCQP families.
% Table~\ref{tab:alpine_speedup} records the speedup or slowdown of Alpine with the strong partitioning and AdaBoost-based policies relative to default Alpine. 
Table~\ref{tab:alpine_gaps} provides statistics on the effective optimality gaps~\eqref{eqn:effective_gap} of Alpine with the different partitioning policies after the first iteration.
% Table~\ref{tab:maxmin_times} reports statistics on the time required to run Algorithm~\ref{alg:enhancements} to determine strong partitioning points for the different QCQP families.
Figures~\ref{fig:plots_bilinear},~\ref{fig:plots_qcqp}, and~\ref{fig:plots_pooling} plot solution profiles and histograms showing the reduction in effective optimality gaps achieved after the first iteration by the strong partitioning and AdaBoost policies relative to Alpine's default partitioning policy.
Section S.5 of the supplemental material presents tables reporting the speedup or slowdown of Alpine with the strong partitioning and AdaBoost-based policies relative to default Alpine, as well as statistics on the time required to run Algorithm~\ref{alg:enhancements} to determine strong partitioning points for the different QCQP families.

\begin{figure}
\centering
\caption{
Results for the random bilinear instances. 
{The times for Alpine+SP2 and Alpine+SP4 do not include the time for running Algorithm~\ref{alg:enhancements} to determine strong partitioning points.}
Top row: solution profiles indicating the percentage of instances solved by the different methods within time T seconds (higher is better). 
Bottom row: histograms of the ratios of the effective optimality gaps~\eqref{eqn:effective_gap} of default Alpine with Alpine+SP2 and with Alpine+ML2 after one iteration (larger is better).}
\label{fig:plots_bilinear}

\vspace*{0.1in}
\begin{subfigure}[b]{0.32\textwidth}
\includegraphics[width=\textwidth]{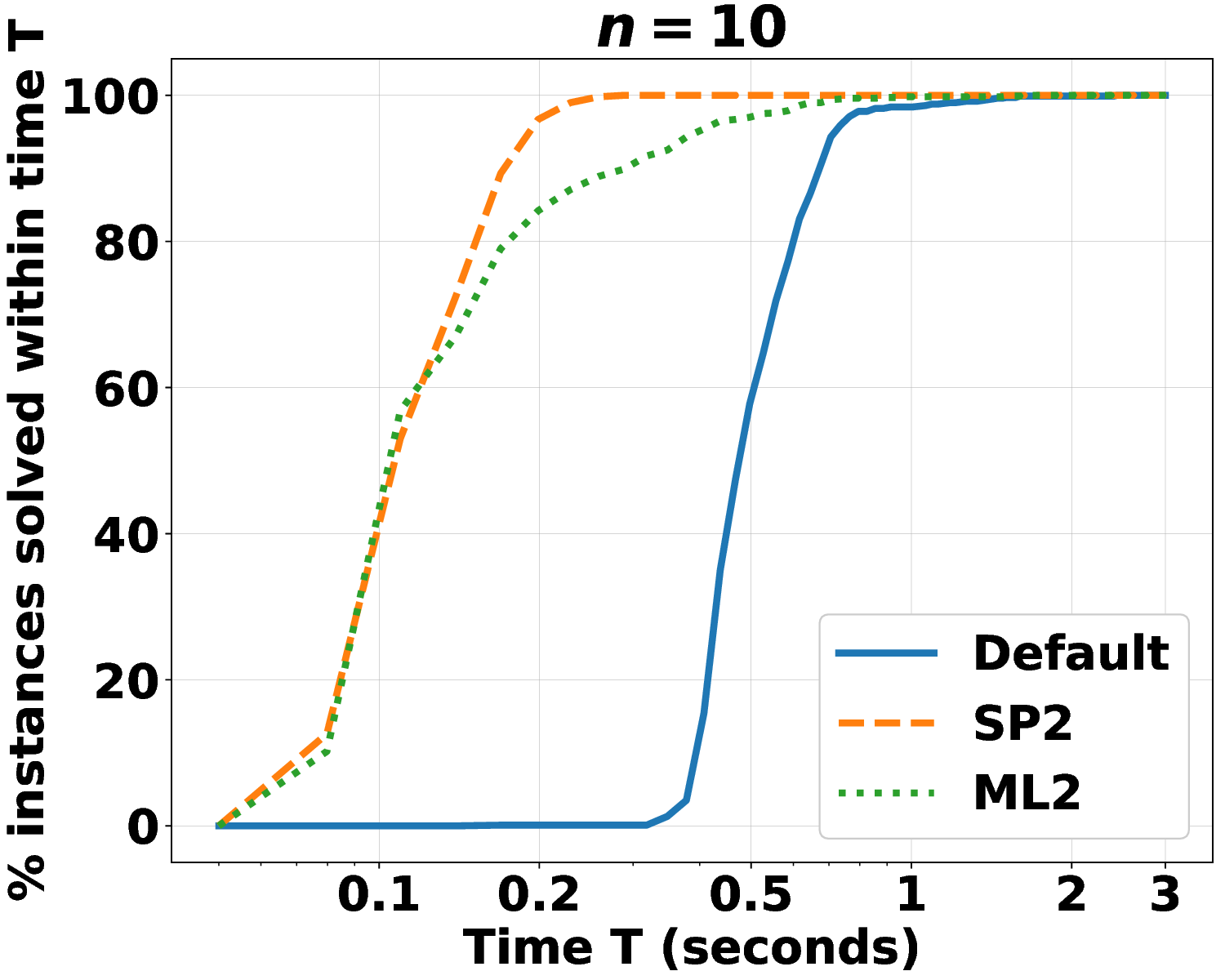}
\end{subfigure}%
\hfill
\begin{subfigure}[b]{0.32\textwidth}
\includegraphics[width=\textwidth]{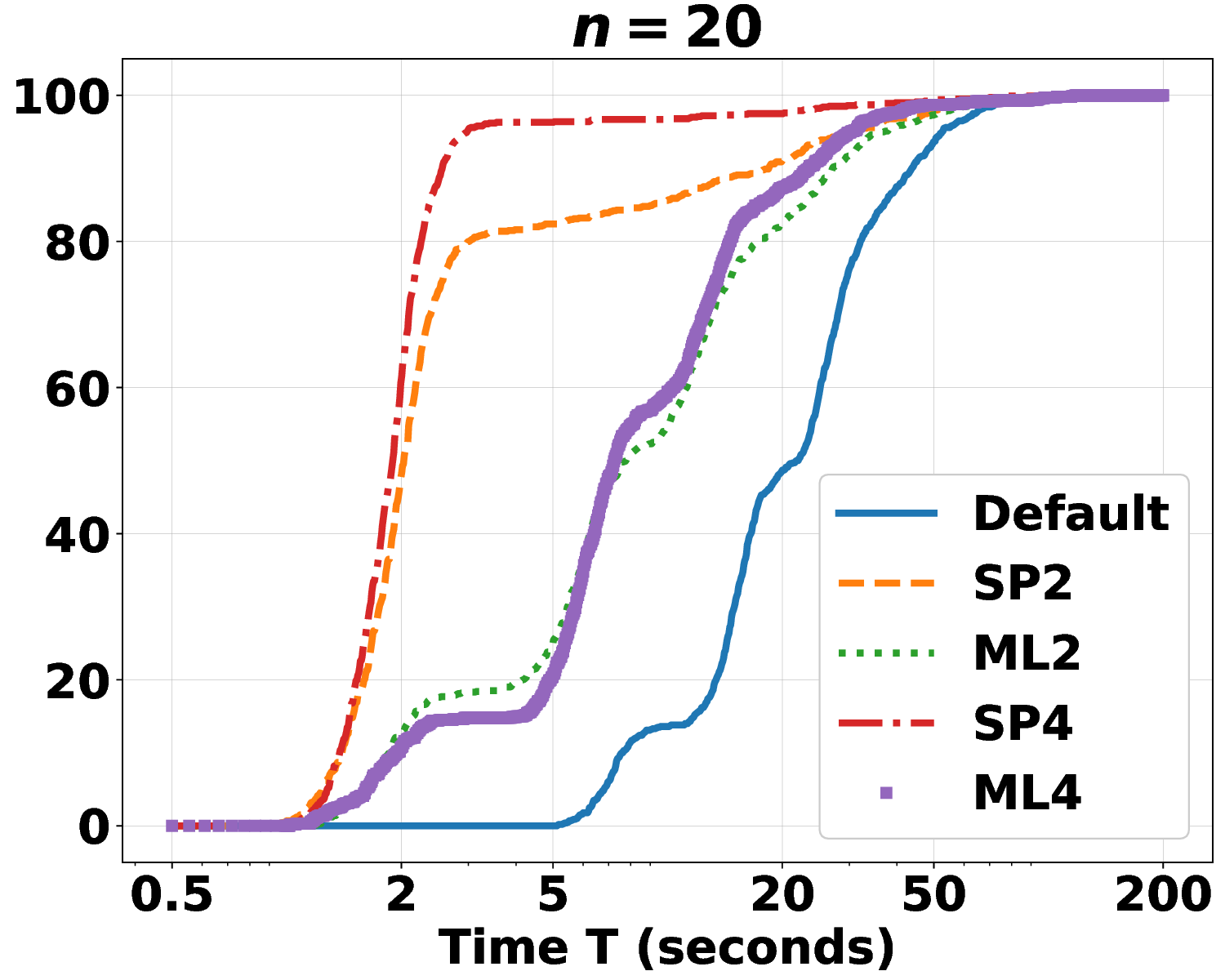}
\end{subfigure}%
\hfill
\begin{subfigure}[b]{0.32\textwidth}
\includegraphics[width=\textwidth]{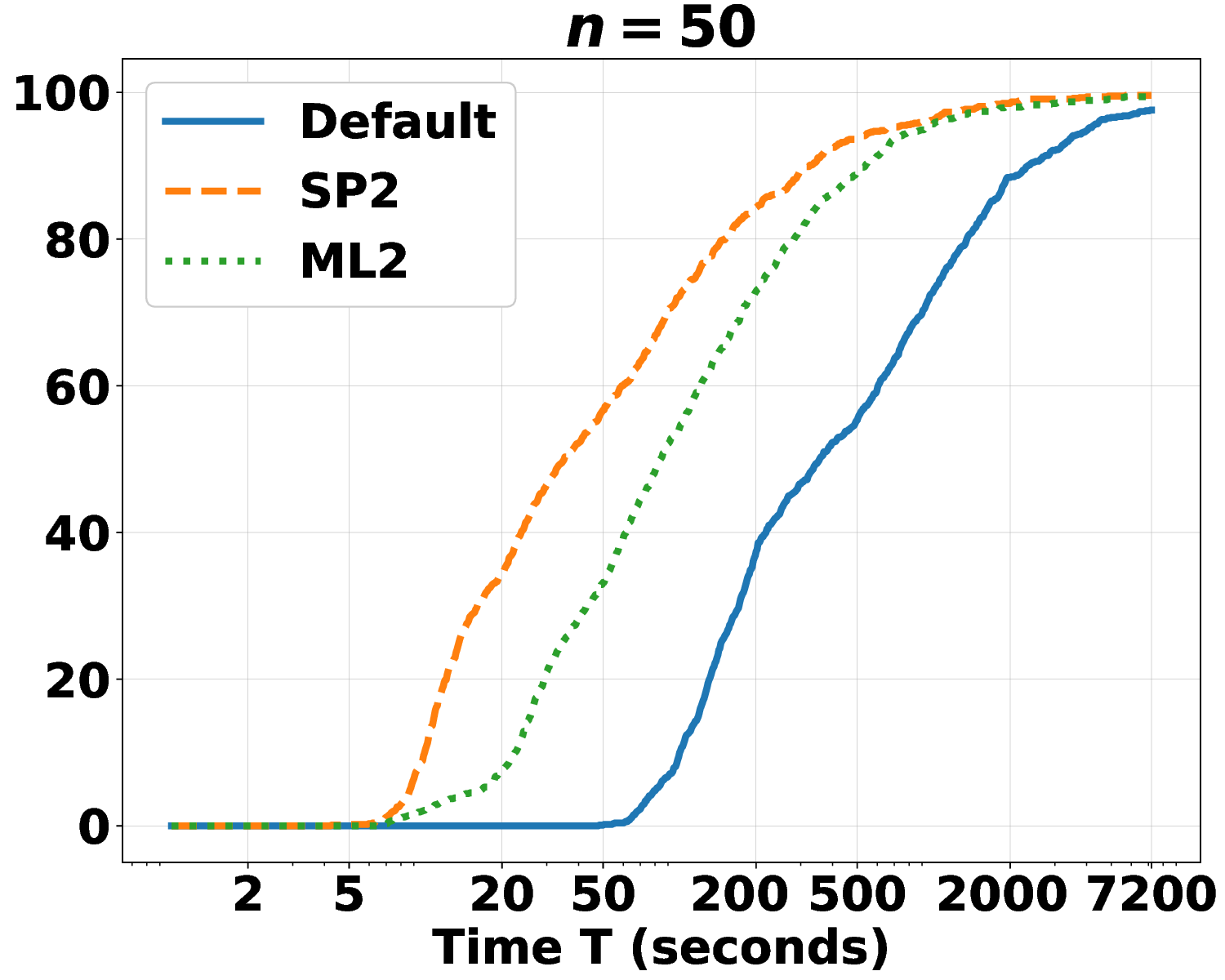}
\end{subfigure}\\[0.1in]
\begin{subfigure}[b]{0.32\textwidth}
\includegraphics[width=\textwidth]{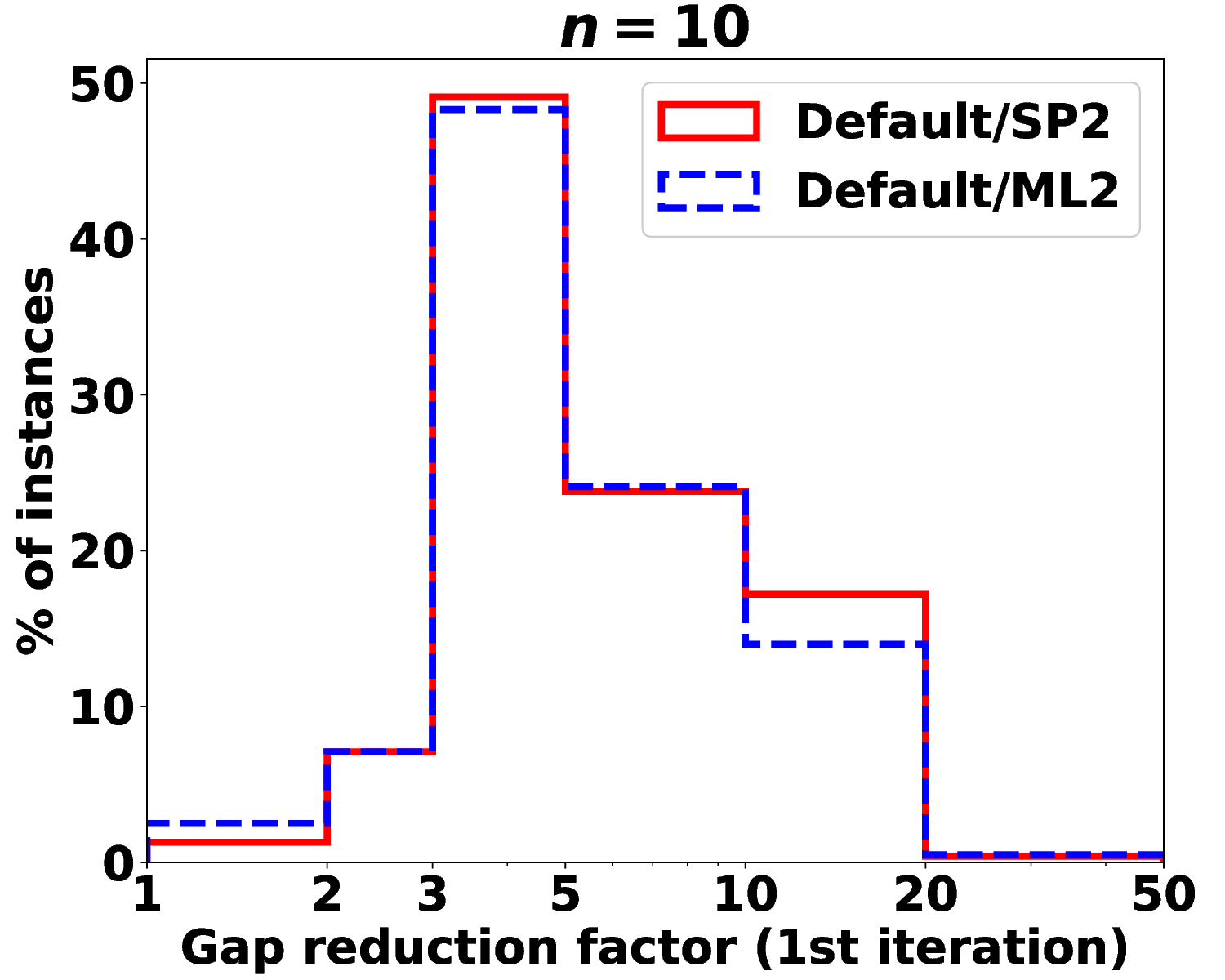}
\end{subfigure}%
\hfill
\begin{subfigure}[b]{0.32\textwidth}
\includegraphics[width=\textwidth]{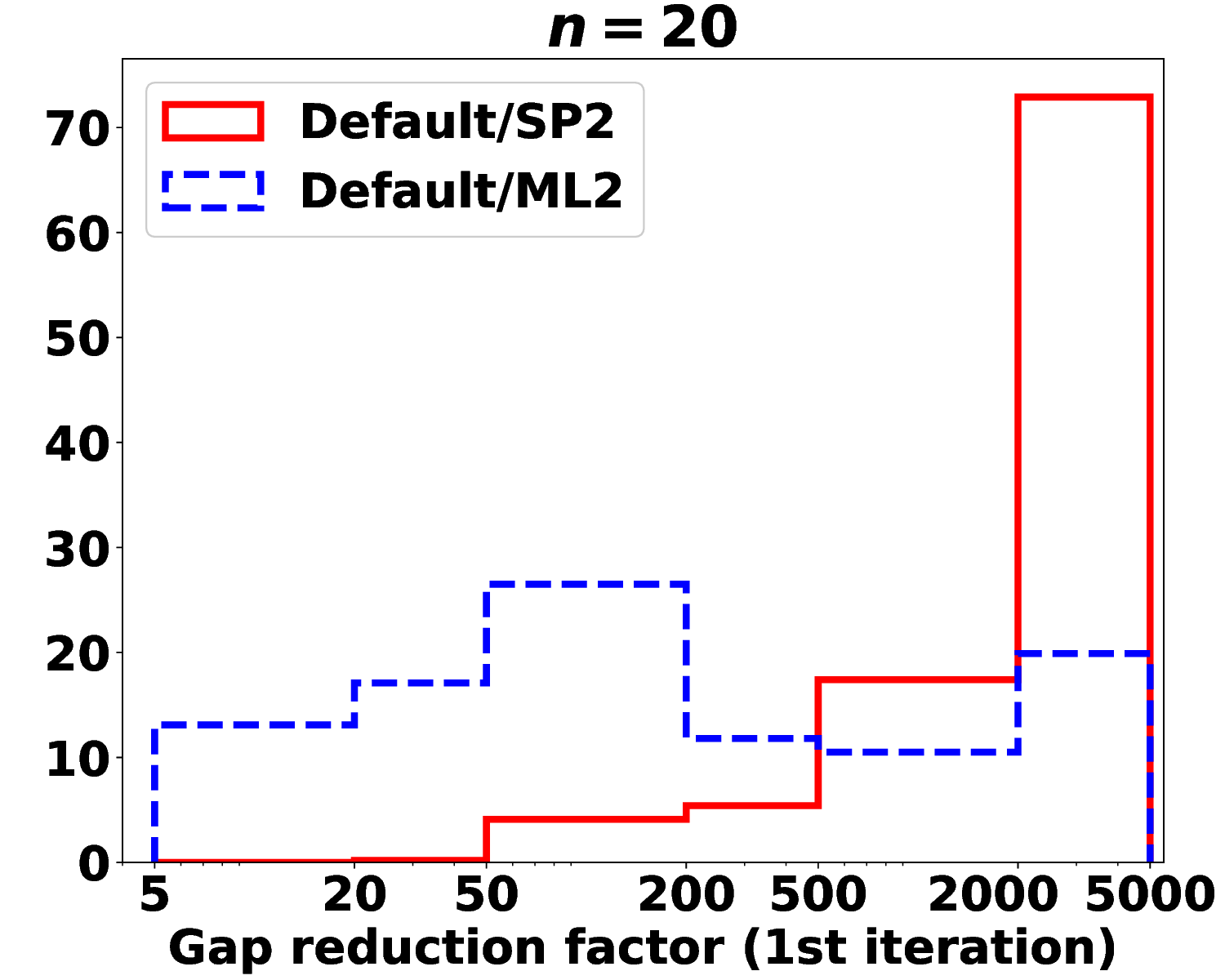}
\end{subfigure}%
\hfill
\begin{subfigure}[b]{0.32\textwidth}
\includegraphics[width=\textwidth]{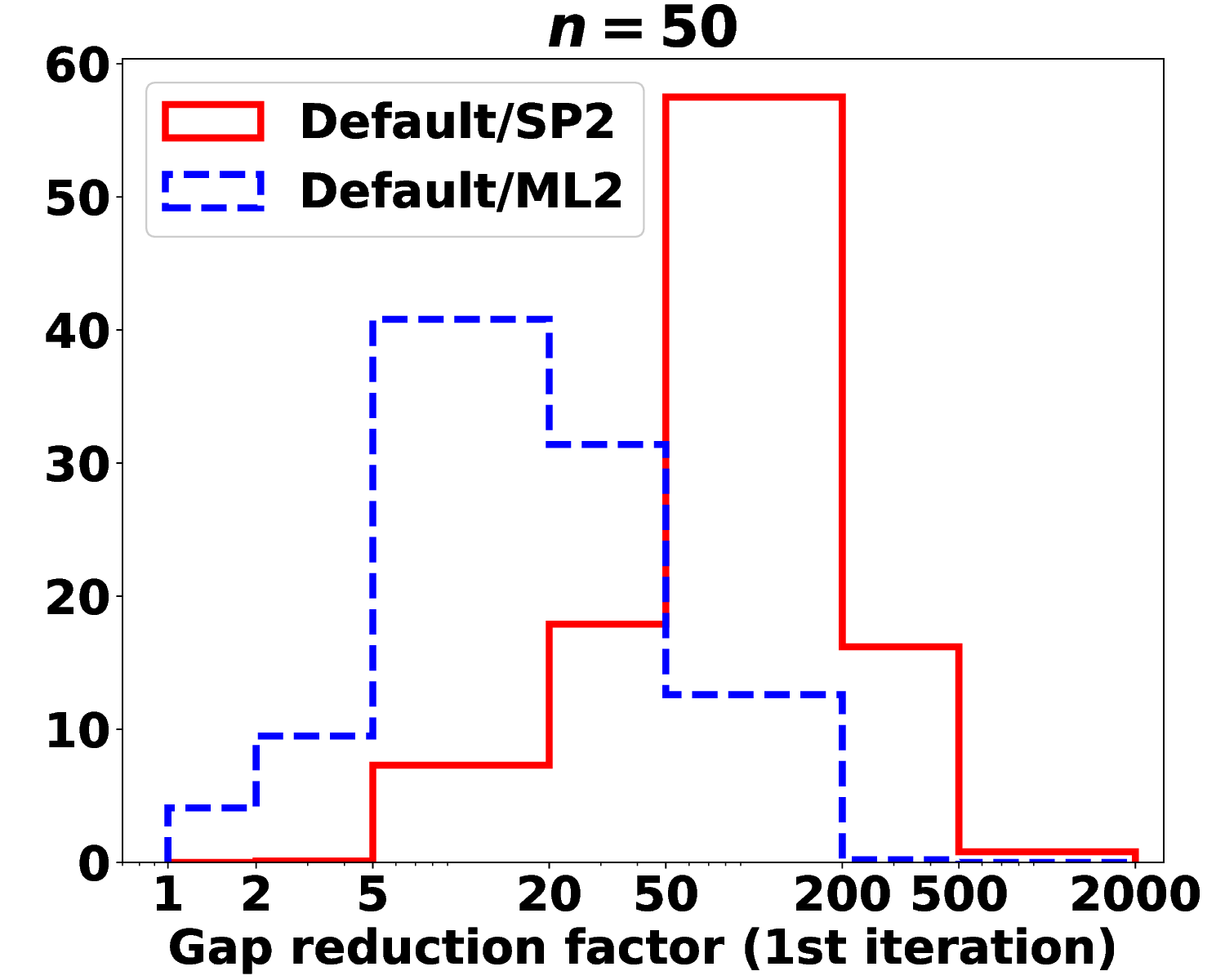}
\end{subfigure}
\end{figure}

\begin{figure}
\centering
\caption{
Results for the random QCQP instances. 
{The times for Alpine+SP2 and Alpine+SP4 do not include the time for running Algorithm~\ref{alg:enhancements} to determine strong partitioning points.}
Top row: solution profiles indicating the percentage of instances solved by the different methods within time T seconds (higher is better). 
Bottom row: histograms of the ratios of the effective optimality gaps~\eqref{eqn:effective_gap} of default Alpine with Alpine+SP2 and with Alpine+ML2 after one iteration (larger is better).}
\label{fig:plots_qcqp}

\vspace*{0.1in}
\begin{subfigure}[b]{0.32\textwidth}
\includegraphics[width=\textwidth]{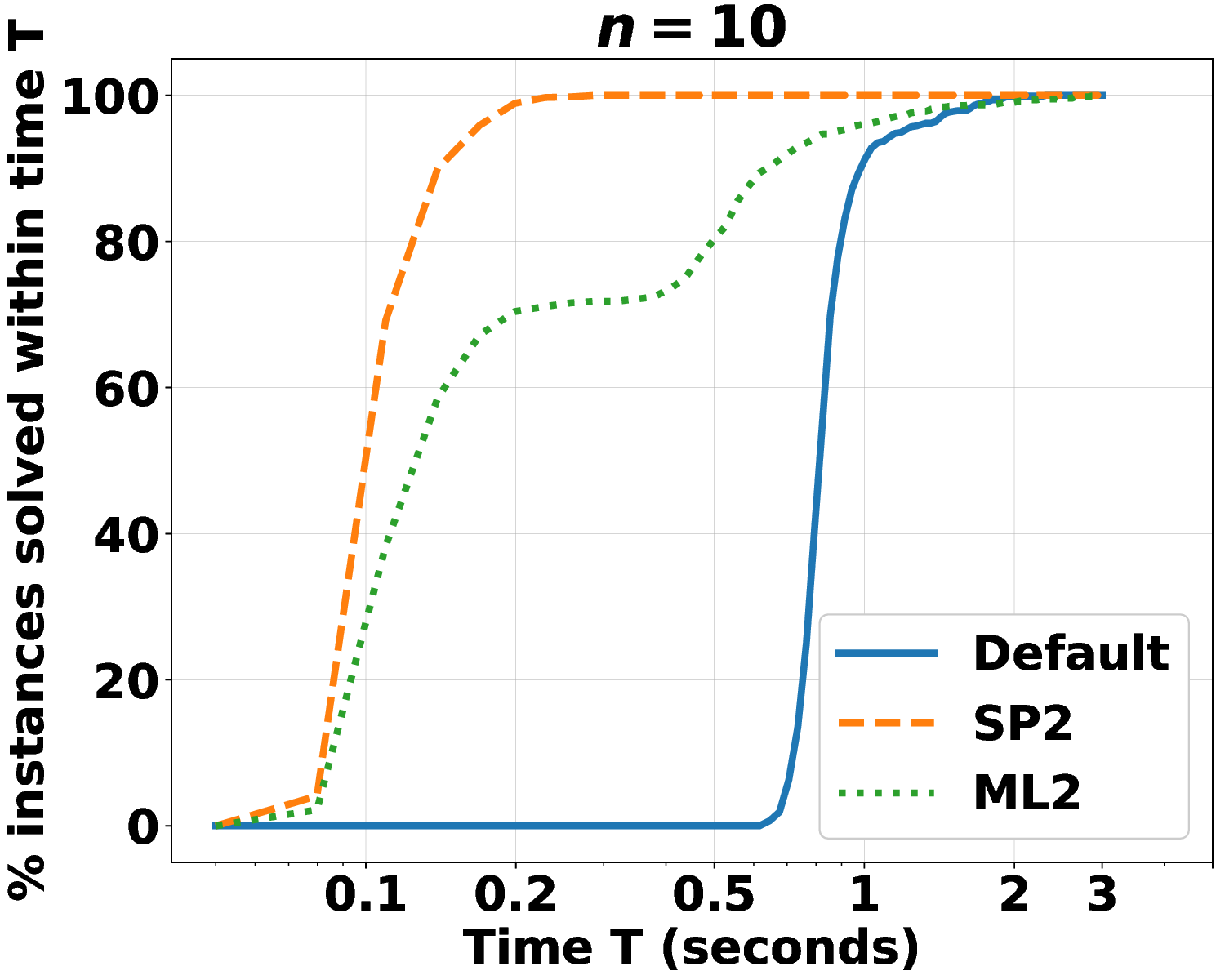}
\end{subfigure}%
\hfill
\begin{subfigure}[b]{0.32\textwidth}
\includegraphics[width=\textwidth]{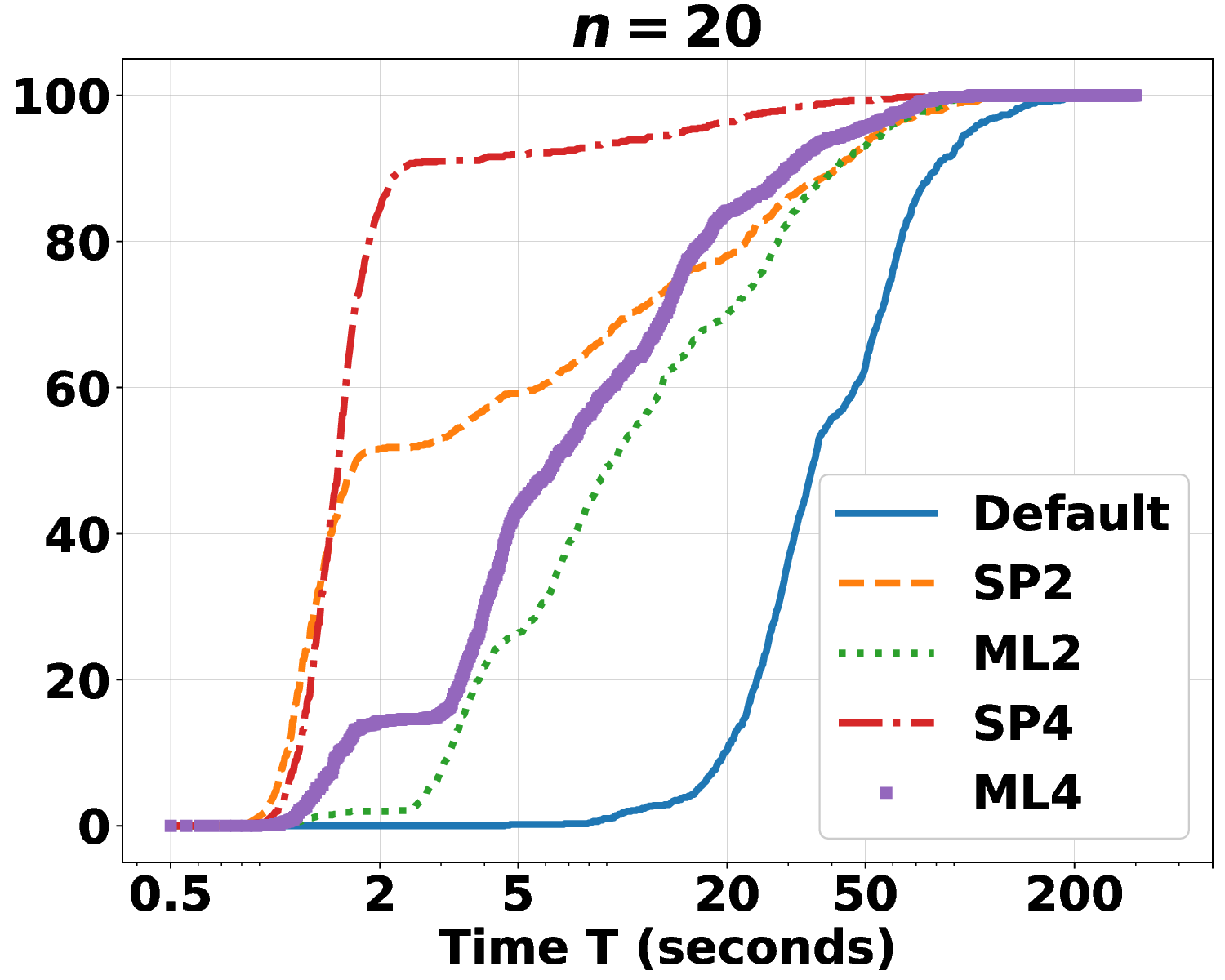}
\end{subfigure}%
\hfill
\begin{subfigure}[b]{0.32\textwidth}
\includegraphics[width=\textwidth]{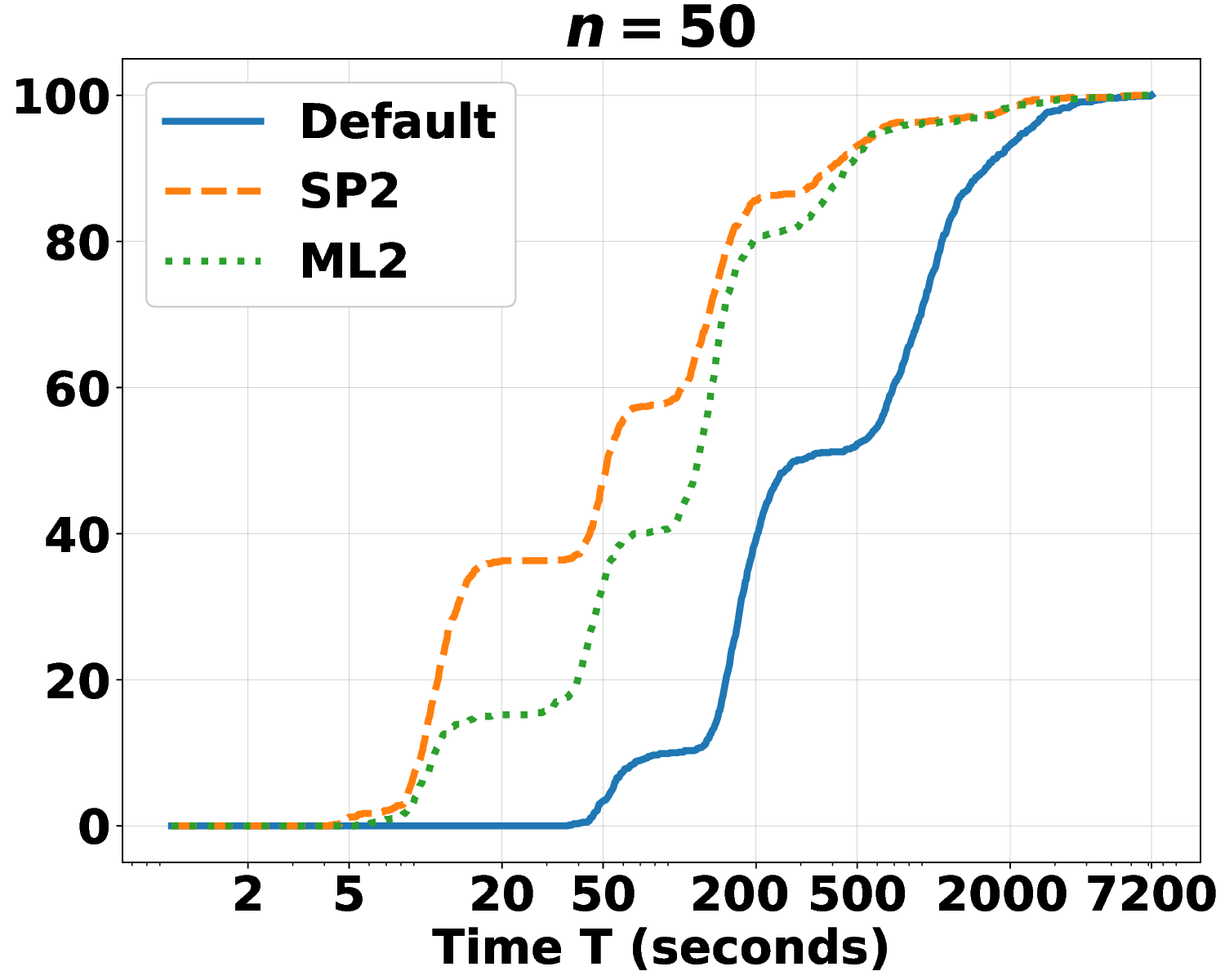}
\end{subfigure}\\[0.1in]
\begin{subfigure}[b]{0.32\textwidth}
\includegraphics[width=\textwidth]{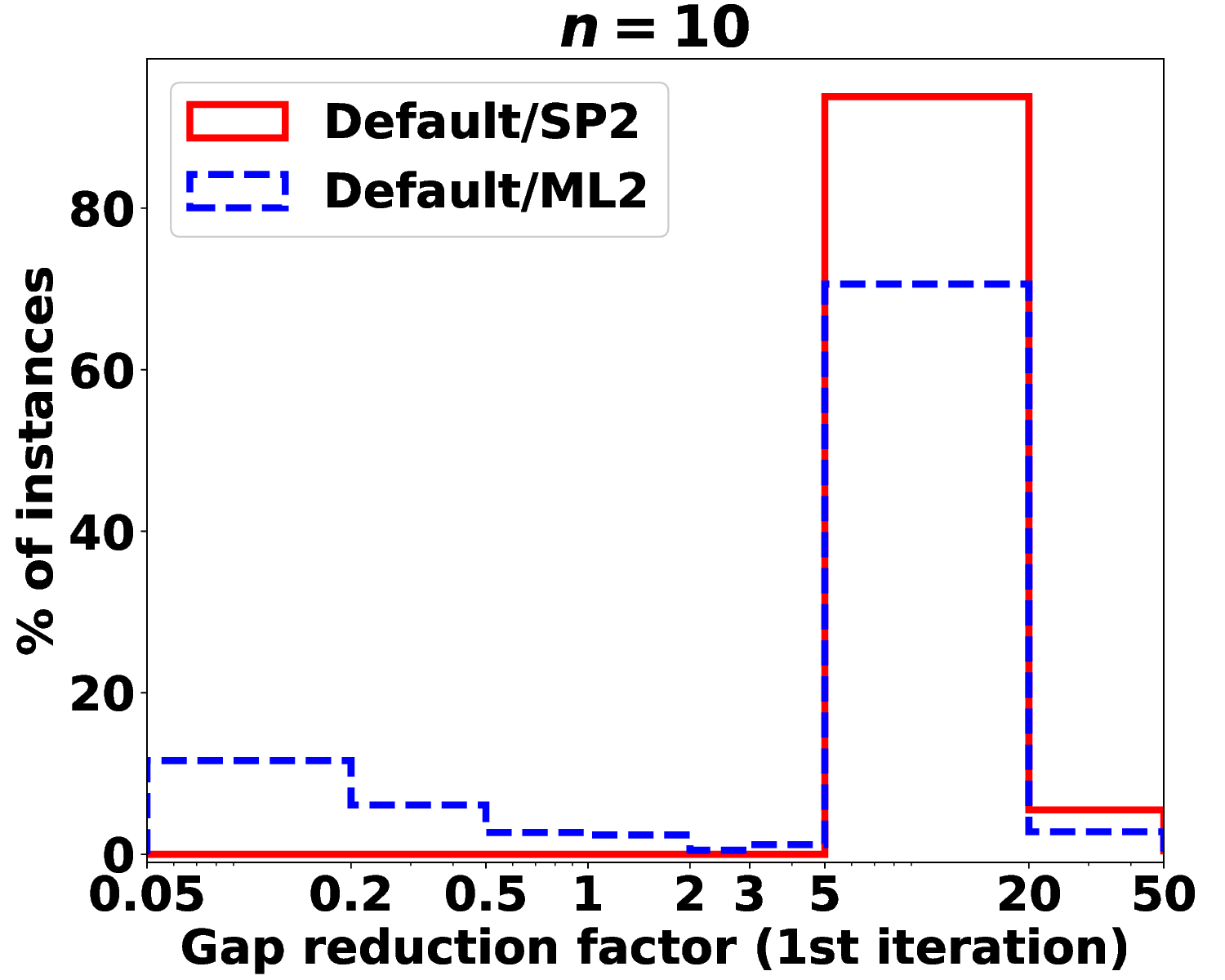}
\end{subfigure}%
\hfill
\begin{subfigure}[b]{0.32\textwidth}
\includegraphics[width=\textwidth]{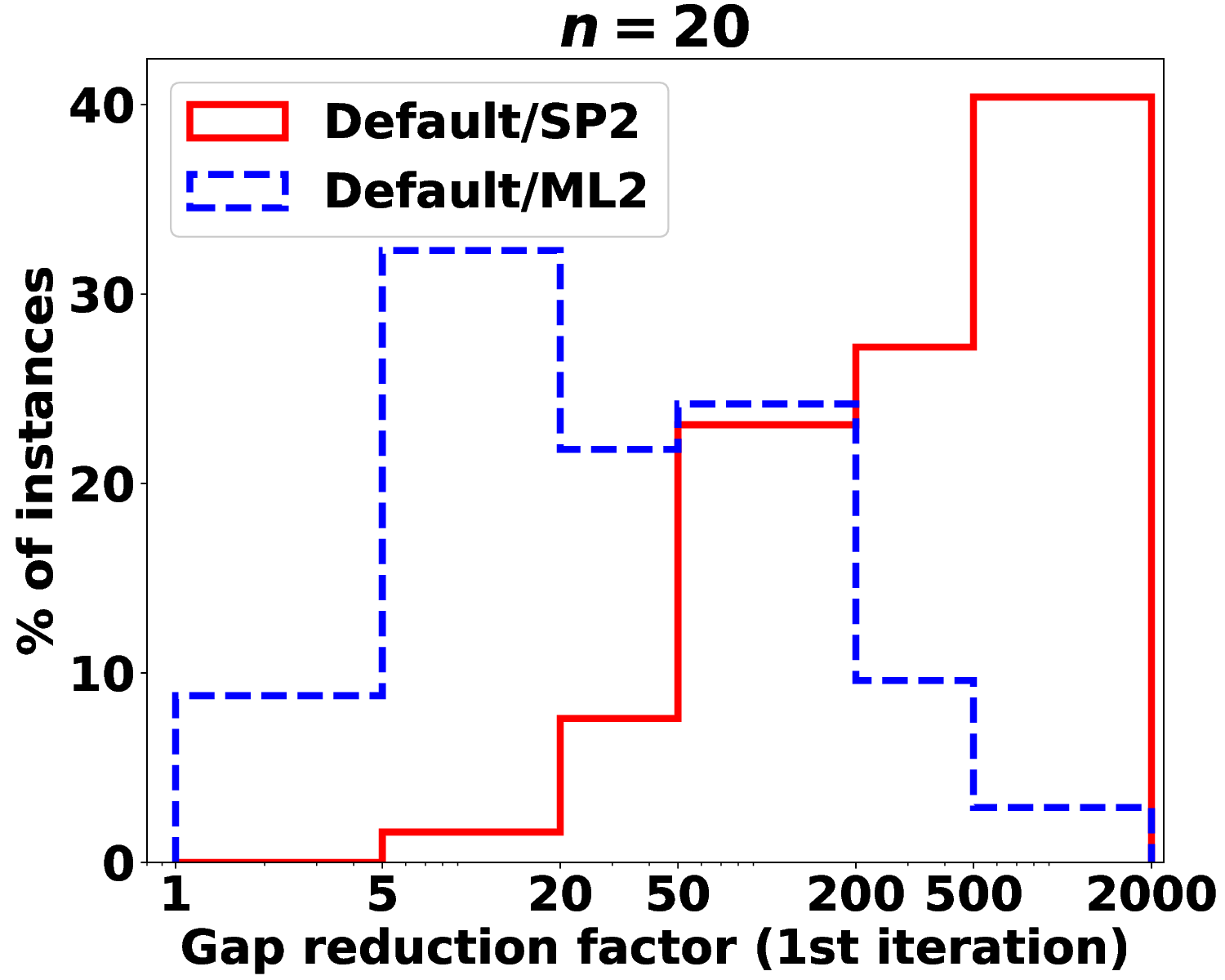}
\end{subfigure}%
\hfill
\begin{subfigure}[b]{0.32\textwidth}
\includegraphics[width=\textwidth]{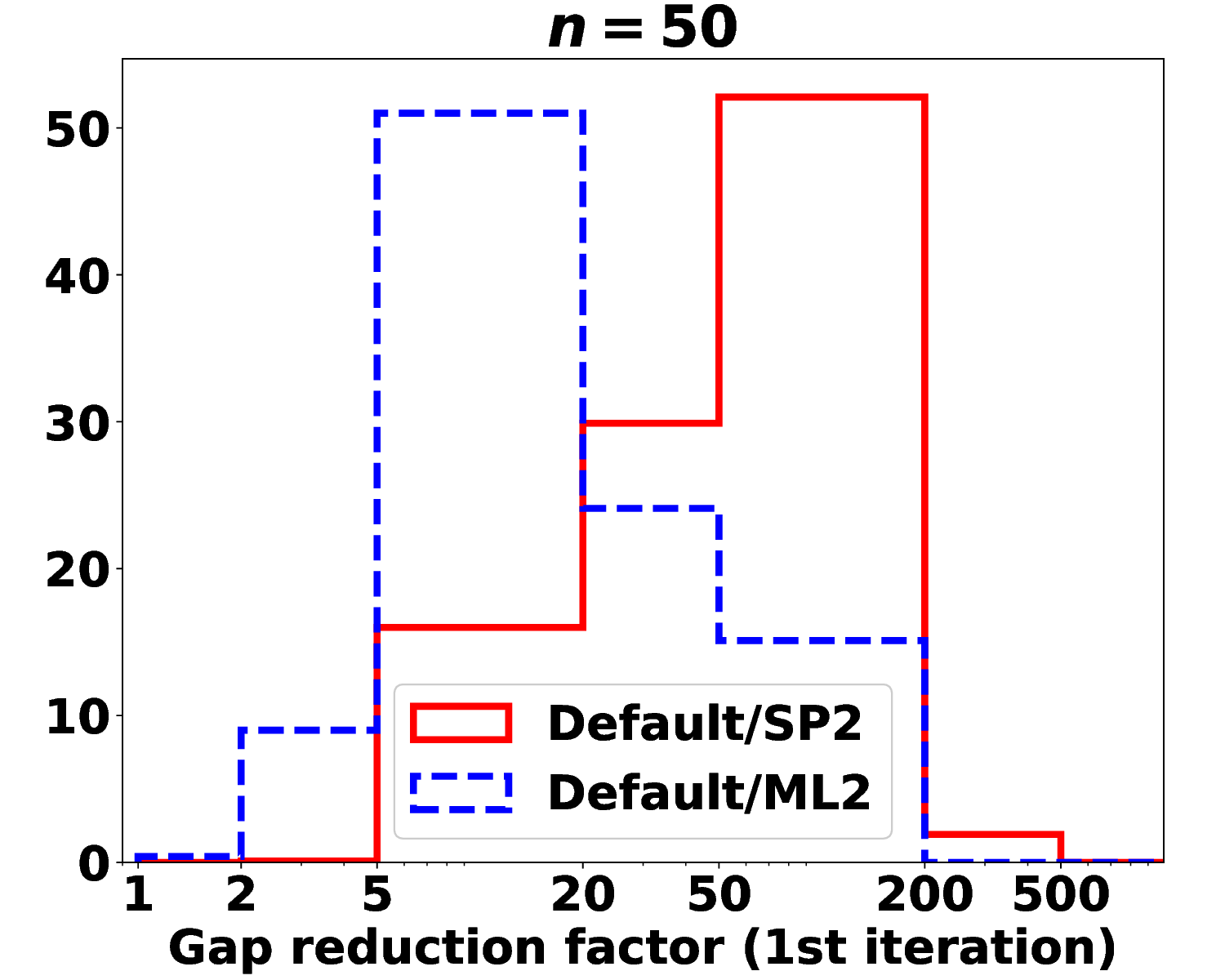}
\end{subfigure}
\end{figure}

\begin{figure}
% \vspace*{-0.22in}
\centering
\caption{Results for the random pooling instances.
{The times for Alpine+SP2 do not include the time for running Algorithm~\ref{alg:enhancements} to determine strong partitioning points.}
Left plot: solution profile indicating the percentage of instances solved by the different methods within time T seconds (higher is better). Right plot: histograms of the ratios of the effective optimality gaps~\eqref{eqn:effective_gap} of default Alpine with Alpine+SP2 and with Alpine+ML2 after one iteration (larger is better).}
\label{fig:plots_pooling}

\vspace*{0.05in}
\begin{subfigure}[b]{0.48\textwidth}
\centering
\includegraphics[width=0.66\textwidth,right]{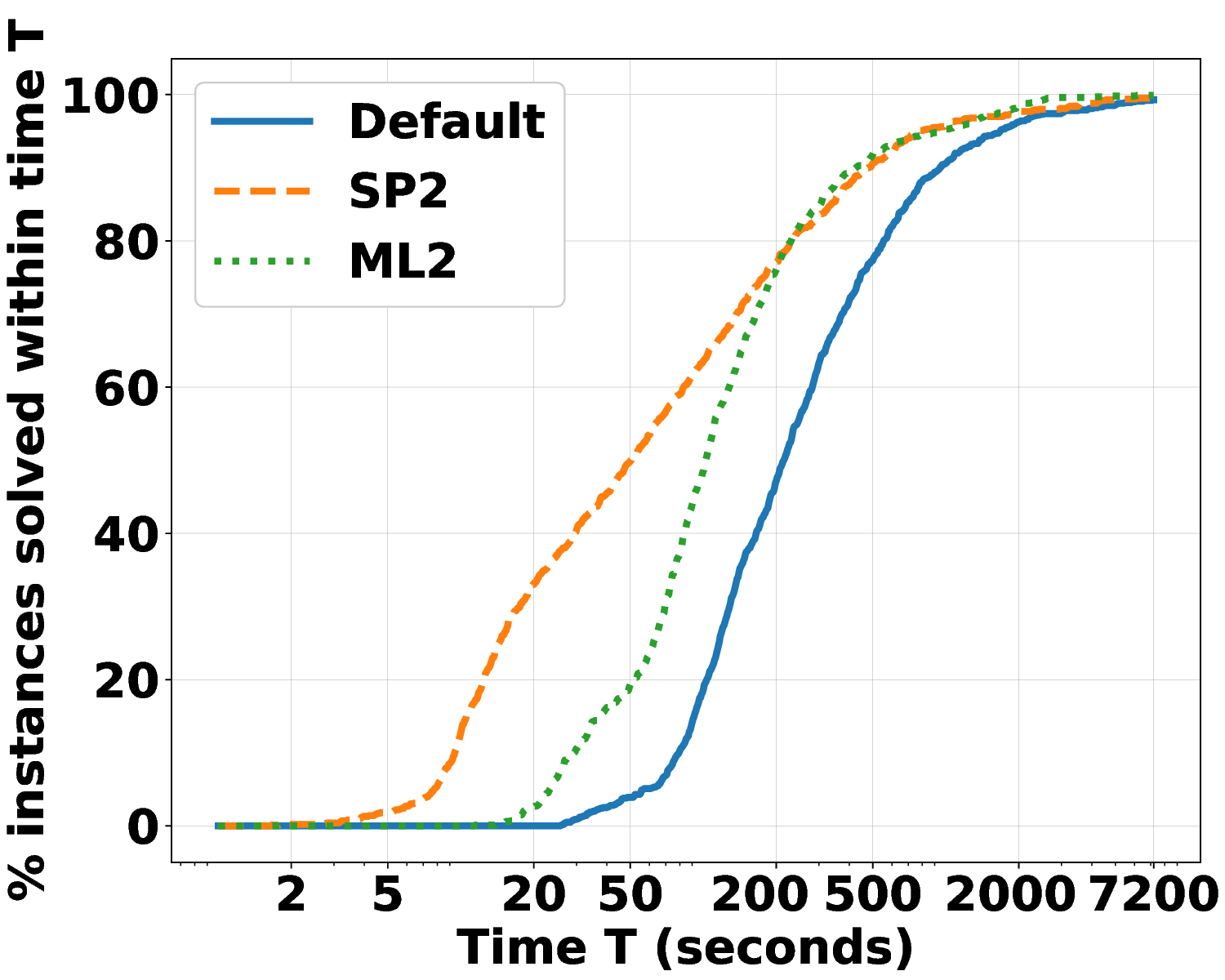}
\end{subfigure}%
\hfill
\begin{subfigure}[b]{0.48\textwidth}
\centering
\includegraphics[width=0.66\textwidth,left]{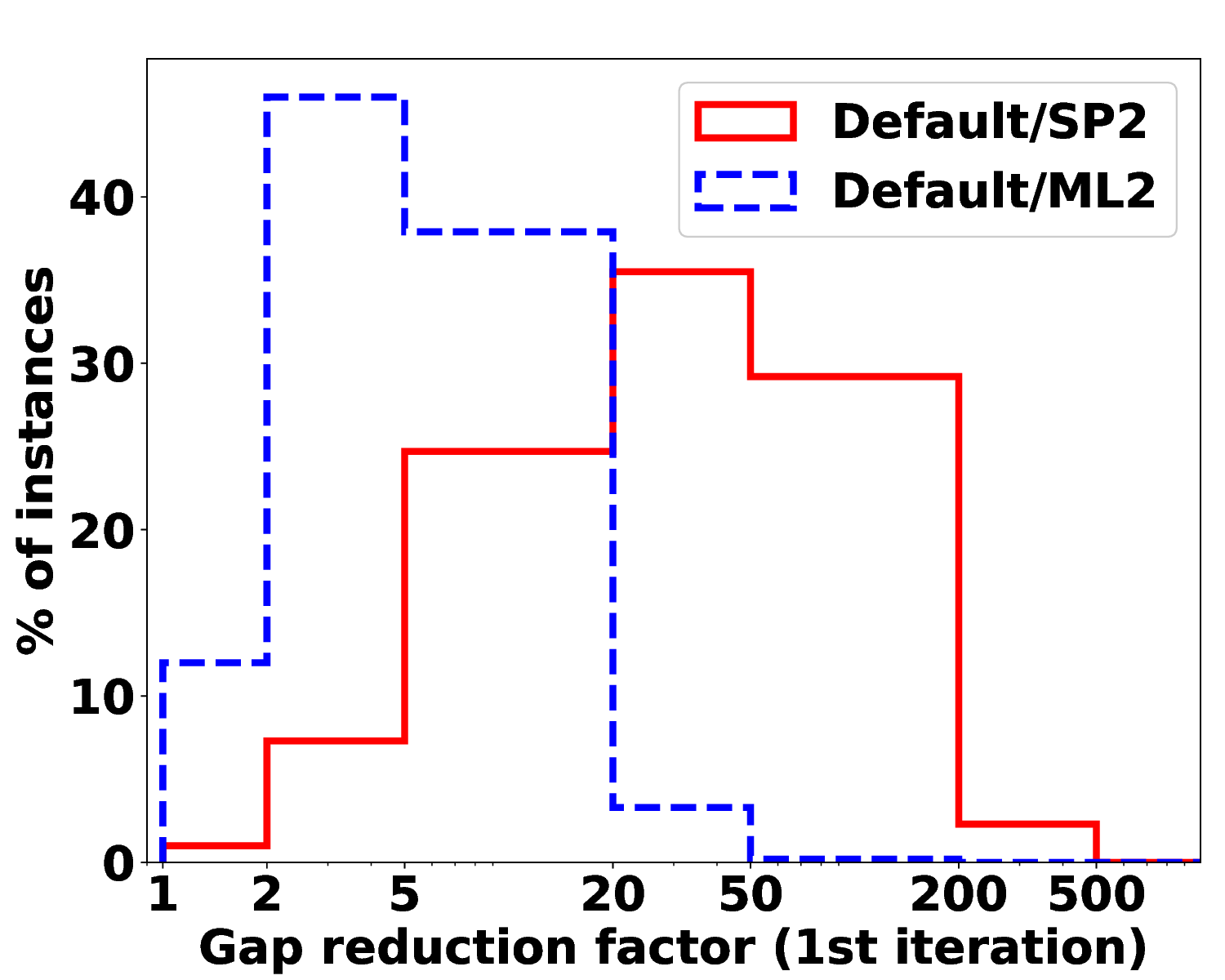}
\end{subfigure}
\end{figure}

\textbf{Bilinear instances.}
Table~\ref{tab:alpine_times} indicates that Alpine+SP2 reduces the shifted GM of default Alpine's solution time by factors of $4.5$, $5.1$, and $7.7$ for the \mbox{$n = 10$}, $n = 20$, and $n = 50$ families.
Alpine+ML2 offers a moderate approximation of Alpine+SP2, reducing the shifted GM of default Alpine's solution time by factors of $3.5$, $2.1$, and $4$, respectively, for $n = 10$, $n = 20$, and $n = 50$.
For the $n = 20$ instances, Alpine+SP4 and Alpine+ML4 reduce the shifted GM of default Alpine's solution time by factors of $9$ and $2.3$.

Table~7 in the supplemental material shows that Alpine+SP2 achieves at least $5\times$ speedup over default Alpine on $41.3\%$ of the $n = 10$ instances, and at least $10\times$ speedup on $39.9\%$ and $46.1\%$ of the $n = 20$ and $n = 50$ instances.
Alpine+ML2 achieves at least $5\times$ speedup on $40.1\%$, $22.2\%$, and $45.2\%$ of the $n = 10$, $n = 20$, and $n = 50$ instances.
Alpine+SP2 achieves a maximum speedup of $15\times$, $49\times$, and $685\times$ on the $n = 10$, $n = 20$, and $n = 50$ instances, while Alpine+ML2 achieves a maximum speedup of $13\times$, $38\times$, and $197\times$ on these instances.

{It is important to note that the times for Alpine+SP2 and Alpine+SP4 do not include the time required to run Algorithm~\ref{alg:enhancements} to determine strong partitioning points, making these solution times unachievable in practice.
Instead, these times reflect the potential performance that could be realized by an efficient ML model perfectly imitating the strong partitioning strategy. 
These results underscore the significant potential of strong partitioning as an expert strategy for accelerating the global optimization of nonconvex QCQPs.}

Table~\ref{tab:alpine_gaps} shows that Alpine+SP2 reduces the GM of default Alpine's effective optimality gap~\eqref{eqn:effective_gap} after the first iteration by factors of $5.5$, $2200$, and $80$, respectively, for the $n = 10$, $n = 20$, and $n = 50$ instances.
Alpine+ML2 also reduces the GM of default Alpine's gap after the first iteration by factors of $4.6$, $180$, and $15$ for $n = 10$, $n = 20$, and $n = 50$.
Notably, Alpine+SP2 closes the gap in the first iteration for $100\%$, $82.3\%$, and $46\%$ of the $n = 10$, $n = 20$, and $n = 50$ instances, while default Alpine is able to close the gap in the first iteration for at most $0.1\%$ of the instances across these families, highlighting the effectiveness of strong partitioning.
Table~\ref{tab:alpine_times} also shows that Alpine+SP2 and Alpine+ML2 terminate with smaller average gaps on the $n = 50$ instances where they time out, compared to default Alpine.

\begin{table}[t]
\centering
\caption{
Statistics on the effective optimality gap~\eqref{eqn:effective_gap} after Alpine's first iteration (note: the minimum possible value of the effective optimality gap is $10^{-4}$, the target gap). 
Columns show the geometric mean, median, minimum, and maximum effective gaps over 1000 instances. 
{The numbers in these columns have been multiplied by $10^4$ for ease of readability.}
The last column indicates the percentage of instances for which each method achieves the minimum possible effective optimality gap of $10^{-4}$ after Alpine's first iteration.
}
% \resizebox{\textwidth}{!}{%
\begin{tabular}{ c | c | c c c c | c }
\hline
Problem Family & Solution Method & \multicolumn{4}{c|}{$10^4 \times$ Effective Optimality Gap} & \% Instances \\
& & GM & Median & Min & Max & Gap Closed \\ \hline
&  Alpine (default)  &  $5.5$  &  $4.5$  &  $1$  &  $343$  &  $0.1$ \\
Bilinear, $n = 10$  &  Alpine+SP2  &  $1$  &  $1$  &  $1$  &  $1$  &  $100$ \\
&  Alpine+ML2  &  $1.2$  &  $1$  &  $1$  &  $439$  &  $88.3$ \\[0.1in]
&  Alpine (default)  &  $2869$  &  $3324$  &  $734$  &  $4805$  &  $0$ \\
&  Alpine+SP2  &  $1.3$  &  $1$  &  $1$  &  $61$  &  $82.3$ \\
Bilinear, $n = 20$  &  Alpine+ML2  &  $16$  &  $19$  &  $1$  &  $1421$  &  $18.9$ \\
&  Alpine+SP4  &  $1$  &  $1$  &  $1$  &  $4.7$  &  $96.0$ \\
&  Alpine+ML4  &  $22$  &  $36$  &  $1$  &  $994$  &  $14.5$ \\[0.1in]
&  Alpine (default)  &  $144$  &  $170$  &  $1$  &  $693$ & $0.1$  \\
Bilinear, $n = 50$ &  Alpine+SP2  &  $1.7$  &  $1.2$  &  $1$  &  $5371$ & $46.0$ \\
&  Alpine+ML2  &  $9.5$  &  $9.4$  &  $1$  &  $4941$ & $5.6$ \\[0.1in]
&  Alpine (default)  &  $13$  &  $12$  &  $7.5$  &  $187$  &  $0$ \\
QCQP, $n = 10$  &  Alpine+SP2  &  $1$  &  $1$  &  $1$  &  $1$  &  $100$ \\
&  Alpine+ML2  &  $3$  &  $1$  &  $1$  &  $1291$  &  $71.8$ \\[0.1in]
&  Alpine (default)  &  $627$  &  $784$  &  $30$  &  $2064$  &  $0$ \\
&  Alpine+SP2  &  $2.1$  &  $1$  &  $1$  &  $66$  &  $52.2$ \\
QCQP, $n = 20$  &  Alpine+ML2  &  $20$  &  $25$  &  $1$  &  $578$  &  $2.0$ \\
&  Alpine+SP4  &  $1.1$  &  $1$  &  $1$  &  $36$  &  $92.6$ \\
&  Alpine+ML4  &  $15$  &  $17$  &  $1$  &  $668$  &  $14.7$ \\[0.1in]
&  Alpine (default)  &  $81$  &  $104$  &  $6.3$  &  $282$ & $0$  \\
QCQP, $n = 50$ &  Alpine+SP2  &  $1.6$  &  $1.3$  &  $1$  &  $10$ & $39.0$ \\
&  Alpine+ML2  &  $4.8$  &  $5.3$  &  $1$  &  $148$ & $14.9$ \\[0.1in]
&  Alpine (default)  &  $68$  &  $64$  &  $12$  &  $440$ & $0$  \\
Pooling &  Alpine+SP2  &  $2.4$  &  $1.4$  &  $1$  &  $31$ & $45.2$ \\
&  Alpine+ML2  &  $15$  &  $16$  &  $1$  &  $63$ & $0.1$ \\ \hline
\end{tabular}%
% }
\label{tab:alpine_gaps}
\end{table}

\textbf{QCQP instances.}
Table~\ref{tab:alpine_times} shows that Alpine+SP2 reduces the shifted GM of default Alpine's solution time by factors of $8.4$, $5.2$, and $6.2$ for the $n = 10$, $n = 20$, and $n = 50$ families.
Alpine+ML2 provides a moderate approximation, reducing the shifted GM of default Alpine's solution time by factors of $3.1$, $3.1$, and $3.9$ for these instances.
For the $n = 20$ instances, Alpine+SP4 and Alpine+ML4 reduce the shifted GM of default Alpine's solution time by factors of $16.4$ and $4.3$, respectively.

Table~7 in the supplemental material indicates that Alpine+SP2 achieves at least a $10\times$ speedup over default Alpine on $20.5\%$, $54.6\%$, and $42.1\%$ of the $n = 10$, $n = 20$, and $n = 50$ instances.
Alpine+ML2 provides at least a $5\times$ speedup on $65.7\%$, $34.7\%$, and $42.7\%$ of these instances.
Alpine+SP2 achieves a maximum speedup of $22\times$, $87\times$, and $98\times$ on the $n = 10$, $n = 20$, and $n = 50$ instances, while Alpine+ML2 results in a maximum speedup of $19\times$, $56\times$, and $32\times$ on these instances.
{We reiterate that the times for Alpine+SP2 and Alpine+SP4 do not include the time required for running Algorithm~\ref{alg:enhancements}. 
These times reflect the potential performance that could be realized by an efficient ML model perfectly imitating the strong partitioning strategy.}

Table~\ref{tab:alpine_gaps} shows that Alpine+SP2 reduces the GM of default Alpine's effective gap~\eqref{eqn:effective_gap} after the first iteration by factors of $13$, $300$, and $50$ for the $n = 10$, $n = 20$, and $n = 50$ instances.
Alpine+ML2 reduces the GM of default Alpine's gap by factors of $4.3$, $31$, and $17$ for these instances.
Notably, Alpine+SP2 closes the gap in the first iteration for $100\%$, $52.2\%$, and $39\%$ of the $n = 10$, $n = 20$, and $n = 50$ instances, while default Alpine is unable to close the gap in the first iteration for any of these instances.

\textbf{Pooling instances.}
Table~\ref{tab:alpine_times} indicates that Alpine+SP2 and Alpine+ML2 reduce the shifted GM of default Alpine's solution time by factors of $3.6$ and $2.1$.
Table~7 in the supplemental material shows that Alpine+SP2 and Alpine+ML2 achieve at least a $5\times$ speedup over default Alpine on $45.7\%$ and $11.5\%$ of the instances.
Table~\ref{tab:alpine_gaps} reveals that Alpine+SP2 and Alpine+ML2 reduce the GM of default Alpine's effective optimality gap~\eqref{eqn:effective_gap} after the first iteration by factors of $28$ and $4.5$.
After the first iteration, Alpine+SP2 closes the effective optimality gap for $45.2\%$ of the instances, while default Alpine fails to close the gap for any of these instances.
Finally, Alpine+SP2 and Alpine+ML2 result in maximum speedups of $120\times$ and $41\times$.
{We reiterate that the times for Alpine+SP2 and Alpine+SP4 do not include the time required for running Algorithm~\ref{alg:enhancements}.}

\vspace*{0.05in}
\textbf{Summary.} Tables~\ref{tab:alpine_times} and~\ref{tab:alpine_gaps}, Figures~\ref{fig:plots_bilinear} to~\ref{fig:plots_pooling}, and Table~7 in the supplemental material clearly demonstrate the advantages of our SP and AdaBoost policies over Alpine's default partitioning policy {for solving homogeneous QCQPs with fixed structure}.
{Comparing Alpine+SP4 and Alpine+SP2 reveals that using SP to select more partitioning points results in more effective, non-myopic partition refinement, albeit at a significant increase in the time required to solve the max-min problem~\eqref{eqn:strong_part}.
Table~8 in the supplemental material indicates that solving the \mbox{max-min} problem~\eqref{eqn:strong_part} to local optimality can be time-consuming, making the direct use of the SP policy in Alpine impractical.
However, the performance metrics for Alpine+SP2 and Alpine+SP4 suggest significant potential improvements, reflecting the best-case scenario achievable by an efficient ML model that perfectly imitates the SP policy. 
This highlights the substantial potential for accelerating the global optimization of QCQPs through tailored ML approaches imitating strong partitioning.}

\section{Future work}
\label{sec:conclusion}

There are several promising avenues for future research.
One potential direction is to move beyond the current approach of prespecifying a fixed number of partitioning points per variable in the strong partitioning problem~\eqref{eqn:strong_part}. 
Instead, we could allocate different numbers of partitioning points to each variable based on their relative impact on the lower bound.
For example, consider a scenario where we aim to allocate up to $d+2$ partitioning points per partitioned variable and have a total budget $B \in [d \times \abs{\mathcal{NC}}]$ for unfixed partitioning points across all variables (excluding variable bounds). 
To optimally allocate and specify these partitioning points, we can solve the following max-min problem:
\[
\max\biggl\{\underline{v}(\bfP) \: : \: (\bfP,\bfZ) \in \mathcal{P}_d \times \{0,1\}^{\abs{\mathcal{NC}} \times d}, \:\: \sum_{i \in \mathcal{NC}} \sum_{j \in [d]} Z_{ij} = B, \:\: Z_{ij} = 0 \implies P_{i(j+1)} = 0, \: \forall (i,j) \in \mathcal{NC} \times [d] \biggr\},
\]
where $\bfP \in [0,1]^{\abs{\mathcal{NC}} \times (d+2)}$ represents the potential {\pp} and $\underline{v}$ is the optimal value function of problem~\eqref{eqn:piecewise_mccormick_mip_oa}.
Here, if $Z_{ij} = 0$, the partitioning point $P_{i(j+1)}$ is forced to $0$, rendering it redundant.
Unlike the original strong partitioning problem~\eqref{eqn:strong_part}, the outer-maximization problem above involves binary decision variables~$\bfZ$,  which necessitates new techniques for its solution.

{Additionally, future work could focus on designing more efficient methods for solving the max-min problem~\eqref{eqn:strong_part} to enhance the scalability of generating strong partitioning expert data.}
{Another exciting direction is designing tailored ML architectures capable of achieving similar speedups as the SP policy for non-homogeneous QCQPs with varying structure.}
Finally, it would be interesting to explore generalizations of the SP policy that optimally select a subset of variables for partitioning at each iteration of Algorithm~\ref{alg:partitioning_algorithm}.

% \theendnotes

\bibliographystyle{informs2014} % outcomment this and next line in Case 1
\bibliography{main.bib} % if more than one, comma separated

\clearpage
\begin{APPENDICES}

\setSupplementStyle  % Apply supplement formatting

\vspace*{-0.5cm}
\section*{Online supplement}

We begin in Section~\ref{subsec:mpbngc} with a summary of the convergence guarantees of MPBNGC. 
Section~\ref{subsec:omitted_proofs} provides omitted proofs of results stated in the main text. 
In Section~\ref{subsubsec:test_instances}, we describe the methodology used to generate our random test instances. 
Section~\ref{subsec:benchmark} benchmarks the difficulty of these instances using the solvers BARON and Gurobi. 
Finally, Section~\ref{subsec:omitted_tables} presents additional tables from our numerical experiments.

\subsection{Convergence guarantees of MPBNGC}
\label{subsec:mpbngc}

We summarize the convergence guarantees of MPBNGC~\citep{makela2003multiobjective} for completeness.

\begin{definition}
Let $Z \subset \mathbb{R}^N$ be open.
A locally Lipschitz function $f: Z \to \mathbb{R}$ is said to be weakly semismooth if the directional derivative
$f'(\bfz,\bfd) = \underset{t \downarrow 0}{\lim} \: \frac{f(\bfz+t\bfd) - f(\bfz)}{t}$
exists for all $\bfz \in Z$ and $\bfd \in \R^N$, and we have $f'(\bfz,\bfd) = \underset{t \downarrow 0}{\lim} \: \tr{\boldsymbol\xi(\bfz+t\bfd)} \bfd$ for some $\boldsymbol\xi(\bfz+t\bfd) \in \partial f(\bfz+t\bfd)$.
\end{definition}

\begin{definition}
Let $f: \R^N \to \R$ and $\bfg : \R^N \to \R^M$ be locally Lipschitz continuous functions.
Consider the problem $\underset{\bfz: \bfg(\bfz) \leq \bfzero}{\min} f(\bfz)$.
A feasible point $\bfz^*$ is said to be substationary if there exist multipliers $\lambda \geq 0$ and $\boldsymbol\mu \in \mathbb{R}^M_+$, with $(\lambda, \boldsymbol\mu) \neq (0, \bfzero)$, such that
$0 \in \lambda \partial f(\bfz^*) + \sum_{j=1}^{M} \mu_j \partial g_j(\bfz^*)$ and $\mu_j g_j(\bfz^*) = 0, \: \forall j \in [M]$.
\end{definition}

\begin{theorem}
Suppose the value function $\underline{v}$ of~\eqref{eqn:piecewise_mccormick_mip_oa} is weakly semismooth.
Then MPBNGC either terminates finitely with a substationary point to~\eqref{eqn:strong_part}, or any accumulation point of a sequence of MPBNGC solutions is a substationary point to~\eqref{eqn:strong_part}.
\end{theorem}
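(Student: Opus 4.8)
The plan is to observe that \eqref{eqn:strong_part} is exactly an instance of the nonsmooth, nonconvex, inequality-constrained program that MPBNGC is designed to solve, and then to invoke the convergence guarantee of~\citet{makela2003multiobjective} after checking that its hypotheses hold in our setting. First I would rewrite the maximization \eqref{eqn:strong_part} as the minimization $\min_P \{ -\underline{v}(P) : g(P) \leq 0 \}$, where the vector-valued map $g$ collects the affine constraints describing $\mathcal{P}_d$: the monotonicity constraints $P_{ij} - P_{i(j+1)} \leq 0$ for $j \in [d+1]$ and $i \in \mathcal{NC}$, together with the endpoint conditions $P_{i1} = 0$ and $P_{i(d+2)} = 1$ (encoded as pairs of inequalities or eliminated by substitution). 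Each component of $g$ is affine, hence smooth, globally Lipschitz, and in particular locally Lipschitz and weakly semismooth in the sense of the definition above.

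Next I would verify the only nontrivial hypothesis, namely that the objective $-\underline{v}$ is locally Lipschitz and weakly semismooth. Weak semismoothness is the standing assumption of the theorem, so it remains to note that $\underline{v}$ is locally Lipschitz on a neighborhood of $\mathcal{P}_d$. This is supplied by the sensitivity analysis of Section~\ref{subsec:genlgrad}: Theorems~\ref{thm:smooth_case} and~\ref{thm:nonsmooth_case} show that at every $P \in \mathcal{P}_d$ the value function $\underline{v}$ admits a computable element of its generalized gradient, which in particular certifies local Lipschitz continuity. I would also record that $\underline{v}(P)$ is well-defined and finite for every $P \in \mathcal{P}_d$: problem~\eqref{eqn:piecewise_mccormick_mip_oa} is feasible because \eqref{eqn:qcqp} is assumed feasible and its feasible points lift to feasible points of the piecewise relaxation, while the objective is bounded below on the compact domain. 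Hence MPBNGC only ever queries $\underline{v}$ at points where the function and generalized-gradient oracles return valid outputs.

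With both the objective and the constraint functions locally Lipschitz and weakly semismooth, the reformulated problem satisfies the standing assumptions of the MPBNGC convergence theorem in~\citet{makela2003multiobjective}. I would then invoke that theorem directly: the method either terminates after finitely many iterations at a point satisfying the substationarity conditions, or, failing finite termination, every accumulation point of the generated sequence is substationary. Translating substationarity of $-\underline{v}$ under the constraints $g \leq 0$ back into the original variables yields substationarity for \eqref{eqn:strong_part}, which is the claim.

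The main obstacle is not the bundle-method bookkeeping---which is inherited wholesale from~\citet{makela2003multiobjective}---but rather ensuring that the generalized-gradient oracle demanded by MPBNGC is genuinely available and consistent at each queried $P$. This is precisely what the outer-approximation passage from \eqref{eqn:piecewise_mccormick_mip} to \eqref{eqn:piecewise_mccormick_mip_oa}, the LP reformulation \eqref{eqn:piecewise_mccormick_mip_abstract_inner}, and the sensitivity results of Section~\ref{subsec:genlgrad} are arranged to deliver; the role of the weak-semismoothness hypothesis is exactly to guarantee that the resulting directional-derivative and generalized-gradient pairs are compatible in the sense required, which is the regularity that the MPBNGC convergence proof consumes.
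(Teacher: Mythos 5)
Your proposal is correct and takes essentially the same route as the paper, whose entire proof is a citation to Theorem~9 of~\citet{makela2003multiobjective}; your additional verification that the constraints defining $\mathcal{P}_d$ are affine and that the reformulated minimization fits MPBNGC's standing assumptions is exactly the (implicit) content of that citation. One small caveat: local Lipschitz continuity of $\underline{v}$ is already built into the paper's definition of weak semismoothness and hence into the theorem's hypothesis, so you need not (and should not) derive it from Theorems~\ref{thm:smooth_case} and~\ref{thm:nonsmooth_case}, which carry extra assumptions (unique $Y$-solutions, unique primal--dual pairs) that are not guaranteed to hold at every $P$.
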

\proof{Proof.}
See Theorem~9 of~\citet{makela2003multiobjective}. \Halmos
\endproof

\subsection{Omitted proofs}
\label{subsec:omitted_proofs}

We now provide omitted proofs of results in the main text.

\subsubsection{Proof of Lemma~\ref{lem:value_fn_active_part}}

\proof{Proof.}
Because $\bfY^*$ is the unique $\bfY$-solution to~\eqref{eqn:piecewise_mccormick_mip_oa} at $\bfP \in \mathcal{P}_d$, we have $v(\bfP,\bfY^*) < v(\bfP,\bfY)$, \mbox{$\forall \bfY \in \mathcal{Y} \backslash \{\bfY^*\}$}.
To demonstrate that $\underline{v}(\cdot) \equiv v(\cdot,\bfY^*)$ in a sufficiently small relative neighborhood of~$\bfP$, we show that the value function $v(\cdot,\bfY)$ is lower semicontinuous on~$\mathcal{P}_d$ for each $\bfY \in \mathcal{Y}$.
The stated result then follows, as $v(\cdot,\bfY^*)$ is assumed to be continuous at $\bfP$.

The set-valued mapping $\bfP \in \mathcal{P}_d \mapsto \{\bfz \geq \bfzero : \bfM(\bfP) \bfz = \bfd(\bfP), \: \bar{\bfM} \bfz = \bar{\bfB} \:\text{vec}(\bfY)\}$ is locally compact for each $\bfY \in \mathcal{Y}$ due to the continuity of the mappings $\bfM$ and $\bfd$ and the finite bounds that can be deduced for all variables in~\eqref{eqn:piecewise_mccormick_mip_oa}.
Hence, Lemma~5.3 of~\citet{still2018lectures} implies $v(\cdot,\bfY)$ is lower semicontinuous on~$\mathcal{P}_d$, for each $\bfY \in \mathcal{Y}$.
\Halmos
\endproof

\subsubsection{Proof of Theorem~\ref{thm:smooth_case}}

\proof{Proof.}
Lemma~\ref{lem:value_fn_active_part} implies that $\underline{v}(\cdot) \equiv v(\cdot,\bfY^*)$ in a sufficiently small relative neighborhood of $\bfP$, provided $v(\cdot,\bfY^*)$ is continuous at $\bfP$.
Theorem~4.3 of~\citet{still2018lectures} (cf.\ Theorem~1 of~\citet{freund1985postoptimal} and Proposition~4.1 of~\citet{de2021generalized}) and the fact that the functions $\bfM$ and $\bfd$ are continuously differentiable on $\mathcal{P}_d$ imply $v(\cdot,\bfY^*)$ is continuously differentiable at $\bfP$ and the stated equalities hold.
\Halmos
\endproof

\subsubsection{Proof of Theorem~\ref{thm:nonsmooth_case}}

\proof{Proof.}
Lemma~\ref{lem:value_fn_active_part} implies that $\underline{v}(\cdot) \equiv v(\cdot,\bfY^*)$ in a sufficiently small relative neighborhood of $\bfP$.
The stated equalities hold by mirroring the proof of Theorem~5.1 of~\citet{de2021generalized} and noting that the functions $\bfM$ and $\bfd$ are continuously differentiable on $\mathcal{P}_d$.
\Halmos
\endproof

\subsubsection{Proof of Lemma~\ref{lem:fullrank}}

\proof{Proof.}
Fix $\bfY \in \mathcal{\bfY}$.
Since $\bfP \in \text{ri}(\mathcal{P}_d)$, we have $0 = P_{i1} < P_{i2} < \dots < P_{i(d+1)} < P_{i(d+2)} = 1$, $\forall i \in \mathcal{NC}$.
We show that for each $(i,j) \in \mathcal{B}$ and $k \in \mathcal{Q}$, the equality constraints in~\eqref{eqn:lambda_form_bilinear_conv_comb}-\eqref{eqn:lambda_form_bilinear_bin} and~\eqref{eqn:lambda_form_quadratic_conv_comb}-\eqref{eqn:lambda_form_quadratic_bin} have full row rank, which implies that $\bfM(\bfP)$ has full row rank.
We ignore the inequality constraints because they are converted into equality constraints by adding unique slack variables.

We begin by focusing on the equality constraints in~\eqref{eqn:lambda_form_bilinear_conv_comb}-\eqref{eqn:lambda_form_bilinear_bin} involving the $\bfx$, $\bfW$, and $\bfLambda$ variables. 
Consider a fixed $(i,j) \in \mathcal{B}$ and any pair of indices $k, l \in [d+1]$.
We can rewrite these equality constraints as follows after suppressing the terms associated with most of the $\bfLambda^{ij}$ variables:
{
\[
\begin{pmatrix}
-1 & 0 & 0 & P_{il} & P_{il} & P_{i(l+1)} & P_{i(l+1)} & & \dots & \\
0 & -1 & 0 & P_{jk} & P_{j(k+1)} & P_{jk} & P_{j(k+1)} & & \ddots & \\
0 & 0 & -1 & P_{il}P_{jk} & P_{il}P_{j(k+1)} & P_{i(l+1)}P_{jk} & P_{i(l+1)}P_{j(k+1)} & & \dots & \\
0 & 0 & 0 & 1 & 1 & 1 & 1 &  & \dots & 
\end{pmatrix}
\begin{pmatrix}
x_i \\
x_j \\
W_{ij} \\
\Lambda^{ij}_{kl} \\
\Lambda^{ij}_{(k+1)l} \\
\Lambda^{ij}_{k(l+1)} \\
\Lambda^{ij}_{(k+1)(l+1)} \\
\vdots
\end{pmatrix}
= \begin{pmatrix}
0 \\
0 \\
0 \\
1
\end{pmatrix}.
\]
}%
We show that the fourth, fifth, sixth, and seventh columns of the matrix above are linearly independent whenever $P \in \text{ri}(\mathcal{P}_d)$.
Suppose, by way of contradiction, that these four columns are linearly dependent. Then, there exist scalars $\mu_1,\mu_2,\mu_3$, and $\mu_4$, not all zero, such that
\[
\mu_1 \begin{pmatrix}
P_{il} \\
P_{jk} \\
P_{il} P_{jk} \\
1
\end{pmatrix} +
\mu_2 \begin{pmatrix}
P_{il} \\
P_{j(k+1)} \\
P_{il} P_{j(k+1)} \\
1
\end{pmatrix} +
\mu_3 \begin{pmatrix}
P_{i(l+1)} \\
P_{jk} \\
P_{i(l+1)} P_{jk} \\
1
\end{pmatrix} +
\mu_4 \begin{pmatrix}
P_{i(l+1)} \\
P_{j(k+1)} \\
P_{i(l+1)} P_{j(k+1)} \\
1
\end{pmatrix} = 
\begin{pmatrix}
0 \\
0 \\
0 \\
0
\end{pmatrix}.
\]
This implies the following linear equations in $\mu_1,\mu_2,\mu_3$, and $\mu_4$:
\begin{subequations}
\begin{align}
(\mu_1 + \mu_2) P_{il} &= -(\mu_3 + \mu_4) P_{i(l+1)}, \label{eqn:int_1}\\
(\mu_1 + \mu_3) P_{jk} &= -(\mu_2 + \mu_4) P_{j(k+1)}, \label{eqn:int_2}\\
P_{il} \big( P_{jk} \mu_1 + P_{j(k+1)} \mu_2 \big) &= -P_{i(l+1)} \big( P_{jk} \mu_3 + P_{j(k+1)} \mu_4 \big), \label{eqn:int_3}\\
\mu_1 + \mu_2 &= -(\mu_3 + \mu_4). \label{eqn:int_4}
\end{align}
\end{subequations}
Since $P_{il} < P_{i(l+1)}$ and $P_{jk} < P_{j(k+1)}$, equations~\eqref{eqn:int_1} and~\eqref{eqn:int_4} imply that $\mu_1 + \mu_2 = 0$ and $\mu_3 + \mu_4 = 0$. Similarly, equations~\eqref{eqn:int_2} and~\eqref{eqn:int_4} imply that $\mu_1 + \mu_3 = 0$ and $\mu_2 + \mu_4 = 0$. Together, these equations imply $\mu_1 = \mu_4$, $\mu_2 = \mu_3$, and $\mu_1 = -\mu_2$. Combining these with equation~\eqref{eqn:int_3} leads to $\mu_1 = \mu_2 = \mu_3 = \mu_4 = 0$, which is a contradiction.

Next, we focus on the equality constraints in~\eqref{eqn:lambda_form_quadratic_conv_comb}-\eqref{eqn:lambda_form_quadratic_bin} involving the $\bfx$, $\bfW$, and $\bfLambda^k$ variables for a fixed $k \in \mathcal{Q}$.
Consider any index $l \in [d+1]$.
We can rewrite these equality constraints as follows after suppressing the terms associated with most of the $\bfLambda^k$ variables:
{
\[
\begin{pmatrix}
-1 & P_{kl} & P_{k(l+1)} & \dots \\
0 & 1 & 1 & \dots
\end{pmatrix}
\begin{pmatrix}
x_k \\
\Lambda^{k}_l \\
\Lambda^{k}_{l+1} \\
\vdots
\end{pmatrix}
= \begin{pmatrix}
0 \\
1
\end{pmatrix}.
\]
}%
The second and third columns of the matrix above are linearly independent whenever $P_{kl} < P_{k(l+1)}$.
\Halmos
\endproof

\subsection{Test instances}
\label{subsubsec:test_instances}

{We describe the process of generating {homogeneous} families of random QCQPs, {motivated by real-world applications that require solving the same underlying QCQP with slightly varying model parameters.}
Although QCQP instances are available from libraries such as QPLIB~\citep{furini2019qplib}, we are not aware of any libraries containing \textit{homogeneous} QCQP instances. 
To address this need, we generate random homogeneous QCQP instance families by building on instance generation schemes from the literature.}

\subsubsection{Random bilinear programs}
\label{subsubsubsec:random_bilinear}

Based on the numerical study in~\citet{bao2011semidefinite}, we consider the following family of parametric bilinear programs that are parametrized by $\bftheta \in [-1,1]^{d_{\theta}}$ {to reflect real-world applications where the same underlying QCQP is solved with slightly varying parameters:}
\begin{alignat*}{2}
v(\bftheta) := &\min_{\bfx \in [0,1]^{n}} \:\: && \bfx^{\textup{T}} \bfQ^0(\bftheta) \bfx + (\bfr^0(\bftheta))^{\textup{T}} \bfx \\
&\quad\: \text{s.t.} && \bfx^{\textup{T}} \bfQ^i(\bftheta) \bfx + (\bfr^i(\bftheta))^{\textup{T}} \bfx \leq b_i, \quad \forall i \in [m_I], \\
& && (\bfa^j)^{\textup{T}} \bfx = d_j, \quad \forall j \in [m_E],
\end{alignat*}
where vectors $\bfr^k(\bftheta) \in \R^n$, $\forall k \in \{0\} \cup [m_I]$, $\bfa^j \in \R^n$, $\forall j \in [m_E]$, $\bfb \in \R^{m_I}$, $\bfd \in \R^{m_E}$, and the matrices $\bfQ^k(\bftheta) \in \mathcal{S}^n$, $\forall k \in \{0\} \cup [m_I]$, are not assumed to be positive semidefinite.

We generate $1000$ instances each with $n \in \{10, 20, 50\}$ variables and $\abs{\mathcal{B}} = \min\{5n, \binom{n}{2}\}$ bilinear terms,
% (see endnote~\ref{enote:bilinear}), 
$\abs{\mathcal{Q}} = 0$ quadratic terms, $m_I = n$ bilinear inequalities, and $m_E = 0.2n$ linear equalities~\citep{bao2011semidefinite}.
Each family of instances with a fixed dimension $n$ is constructed to have the same set of $\abs{\mathcal{B}}$ bilinear terms.
We set the dimension $d_{\theta} = 3 \times (0.2m_I + 1)$ (an explanation for this choice is provided below).
The problem data is generated as follows~\citep[cf.][]{bao2011semidefinite}.
All entries of the vectors $\bfa^j$ and $\bfd$ are drawn i.i.d.\ from the uniform distribution $U(-1,1)$, while all entries of the vector $\bfb$ are drawn i.i.d.\ from $U(0,100)$.
The components of $\bftheta$ are drawn i.i.d.\ from $U(-1,1)$.
Each $\bfQ^k$ and $\bfr^k$, $k \in \{0,1,\dots,0.2m_I\}$, is of the form:
\[
\bfQ^k(\bftheta) = \bar{\bfQ}^k + \displaystyle\sum_{l=3k+1}^{3k+3} \theta_{l} \tilde{\bfQ}^{k,l-3k}, \qquad \bfr^k(\bftheta) = \bar{\bfr}^k + \displaystyle\sum_{l=3k+1}^{3k+3} \theta_{l} \tilde{\bfr}^{k,l-3k}.
\]
The nonzero entries of $\bar{\bfQ}^k \in \mathcal{S}^n$ are drawn i.i.d.\ from $U(-0.5,0.5)$, whereas the nonzero entries of $\bar{\bfr}^k \in \R^n$ are drawn i.i.d.\ from $U(-1,1)$. 
The matrices $\tilde{\bfQ}^{k,l} \in \mathcal{S}^n$, $k \in \{0\} \cup [0.2m_I]$, $l \in [3]$, are generated as follows.
For each $(i,j) \in \mathcal{B}$, $k \in \{0\} \cup [0.2m_I]$, and $l \in [3]$, we set $\tilde{Q}^{k,l}_{ij} := \Gamma^{k,l}_{ij} \bar{Q}^k_{ij}$, where $\Gamma^{k,l}_{ij}$ are drawn i.i.d.\ from $U(0,0.5)$. We set $\tilde{Q}^{k,l}_{ij} = \tilde{Q}^{k,l}_{ji}$ for each $(i,j) \in \mathcal{B}$ to ensure that $\tilde{\bfQ}^{k,l}$ is symmetric.
Next, for each $i \in [n]$, $k \in \{0\} \cup [0.2m_I]$, and $l \in [3]$, we set $\tilde{r}^{k,l}_i := \delta^{k,l}_i \bar{r}^k_i$, where $\delta^{k,l}_i$ are drawn i.i.d.\ from $U(0,0.5)$. 
{The data $\bar{\bfQ}^k$, $\tilde{\bfQ}^{k,l}$, $\bar{\bfr}^k$, and $\tilde{\bfr}^{k,l}$ are fixed across the $1000$ instances for each value of $n$.}
Since each $\tilde{\bfQ}^{k,l}$ and $\tilde{\bfr}^{k,l}$ is a different perturbation of $\bar{\bfQ}^k$ and $\bar{\bfr}^k$, the expansions of $\bfQ^k$ and $\bfr^k$ can be motivated using principal component analysis.
The nonzero entries of $\bfQ^k$ and $\bfr^k$, $k \in \{0.2m_I+1,\dots,m_I\}$, are identical across all $1000$ instances for each value of $n$, with each entry of $\bfQ^k$ drawn i.i.d.\ from $U(-0.5,0.5)$ and each entry of $\bfr^k$ drawn i.i.d.\ from $U(-1,1)$.
Finally, the constraint coefficients are re-scaled so that all entries of the vectors $\bfb$ and $\bfd$ equal one.
Note that for a fixed dimension~$n$, each instance is uniquely specified by the parameters~$\bftheta$ of dimension $d_{\theta} = 3 (0.2n + 1)$.
{Given that we assume that the forms of each $\bfQ^k(\bftheta)$ and $\bfr^k(\bftheta)$ and the data $\bar{\bfQ}^k$, $\tilde{\bfQ}^{k,l}$, $\bar{\bfr}^k$, and $\tilde{\bfr}^{k,l}$ are known, with only the parameter $\bftheta$ varying across different instances within each family, we can use $\bftheta$ as part of the features for each QCQP instance.}

\subsubsection{Random QCQPs with bilinear \textit{and} univariate quadratic terms}
\label{subsubsubsec:random_qcqps}

We also generate $1000$ random QCQPs with $\abs{\mathcal{B}} = \min\{5n, \binom{n}{2}\}$ bilinear terms 
% (see endnote~\ref{enote:bilinear}) 
and $\abs{\mathcal{Q}} = \lfloor0.25n\rfloor$ univariate quadratic terms for each of $n \in \{10,20,50\}$ variables. Once again, all instances for a fixed dimension $n$ comprise the same set of bilinear and univariate quadratic terms.
The coefficients of quadratic terms in the objective and constraints are generated similarly to the coefficients of bilinear terms in Section~\ref{subsubsubsec:random_bilinear}.
The rest of the model parameters and problem data (including $\bftheta$) are also generated similarly as in Section~\ref{subsubsubsec:random_bilinear}.

\subsubsection{The pooling problem}
\label{subsubsubsec:pooling_problem}

The pooling problem is a classic example of a bilinear program introduced by~\citet{haverly1978studies}.
It has several important applications,
including petroleum refining~\citep{kannan2018algorithms,yang2016integrated}, natural gas production~\citep{kannan2018algorithms,li2011stochastic}, and wastewater treatment~\citep{bergamini2008improved,misener2013glomiqo,saif2008global}.
Its goal is to blend inputs of differing qualities at intermediate pools to produce outputs that meet quality specifications while satisfying capacity constraints at inputs, pools, and outputs.
Solving the pooling problem is in general NP-hard.

We consider instances of the pooling problem with $45$ inputs, $15$ pools, $30$ outputs, and a single quality.
Each instance has $116$ input-output arcs, $71$ input-pool arcs, and $53$ pool-output arcs, yielding $572$ variables and $621$ constraints, including $360$ linear constraints and $261$ bilinear equations (with $124$ variables involved in bilinear terms).
% , see endnote~\ref{enote:bilinear}).
We use the $pq$-formulation of the pooling problem from Section~2 of~\citet{luedtke2020strong}.
Unlike the random bilinear instances in Section~\ref{subsubsubsec:random_bilinear} where all of the original ``$\bfx$ variables'' participate in bilinear terms, only $124$ out of the $311$ original variables in the pooling model participate in bilinear terms.

We first generate a nominal instance using the ``random Haverly'' instance generation approach (see \url{https://github.com/poolinginstances/poolinginstances} for details) in~\cite{luedtke2020strong} that puts together $15$ perturbed copies of one of the~\citet{haverly1978studies} pooling instances and adds $150$ edges to it.
We modify the target output quality concentrations generated by~\citet{luedtke2020strong} to construct harder instances.
For each output $j$, we compute the minimum $c^{\text{min}}_j$ and maximum $c^{\text{max}}_j$ input concentrations of the quality over the subset of inputs from which there exists a path to output $j$.
We then specify the lower and upper bound on the quality concentration at output $j$ to be \mbox{$c^{\text{min}}_j + \alpha_j (c^{\text{max}}_j - c^{\text{min}}_j)$} and $c^{\text{min}}_j + \beta_j (c^{\text{max}}_j - c^{\text{min}}_j)$, respectively, where $\alpha_j \sim U(0.2,0.4)$ and $\beta_j \sim U(0.6,0.8)$ are generated independently.
We also rescale the capacities of the inputs, pools, and outputs and the costs of the arcs for better numerical performance.
% of the relaxations.
Note that while all variables in the formulation are nonnegative, upper bounds on the variables are not necessarily equal to one after rescaling.
After constructing a nominal instance using the above procedure, we use it to generate $1000$ random pooling instances by randomly perturbing each input's quality concentration (parameters $\bftheta$ for this problem family) by up to $20\%$, uniformly and independently.

{
Each pooling instance in the family is a nonconvex bilinear program that can be expressed as:
\begin{alignat*}{2}
v(\bftheta) := &\min_{\bfx \in [\bfzero,\mathbf{u}]} \:\: && (\bfr^0)^{\textup{T}} \bfx \\
&\quad\: \text{s.t.} && \bfx^{\textup{T}} \bfQ^i(\bftheta) \bfx + (\bfr^i(\bftheta))^{\textup{T}} \bfx \leq 0, \quad \forall i \in [m_I], \\
& && \bfx^{\textup{T}} \bfQ^i \bfx + (\bfr^i)^{\textup{T}} \bfx = 0, \quad \forall i \in \{m_I+1,\dots,m_E\}, \\
& && (\bfa^j)^{\textup{T}} \bfx \leq d_j, \quad \forall j \in [m'_I], \\
& && (\bfa^j)^{\textup{T}} \bfx = 1, \quad \forall j \in \{m'_I+1,\dots,m'_E\},
\end{alignat*}
where $\mathbf{u} \in \mathbb{R}^{311}_+$, $m_I = 131$, $m_E = 124$, $m'_I = 90$, $m'_E = 15$, and the matrices $\bfQ^i$ and vectors $\bfr^i$, $\bfa^j$, and $\bfd$ are of appropriate dimensions. 
The parameters $\bftheta \in \R^{45}_+$ represent input qualities for this family of instances, with the coefficients $\bfQ^i(\bftheta)$ and $\bfr^i(\bftheta)$ being known affine functions of $\bftheta$ for each $i \in [m_I]$.
We refer the reader to formulation~(1) in~\citet{luedtke2020strong} for further details.
We assume that the decision-maker can measure and use the values of $\bftheta$ as features for each pooling instance.
}

\subsection{{Benchmarking test instances using state-of-the-art global optimization solvers}}
\label{subsec:benchmark}

To highlight the nontrivial nature of our nonconvex QCQP test instances, we solve them to global optimality using the state-of-the-art global optimization solvers BARON and Gurobi, evaluating them with the metrics detailed below.
% in Section~\ref{subsubsec:outline_experiments}.
{It is important to note that our goal is not to compare BARON or Gurobi with the different versions of Alpine, but rather to demonstrate that both our QCQP instances and the accelerations of Alpine achieved are nontrivial.}

\subsubsection{Computational setup}

% We benchmark the difficulty of our test instances using BARON and Gurobi.
We use Gurobi 9.1.2 via Gurobi.jl v0.11.3 and BARON 23.6.23 via BARON.jl v0.8.2, with the option of using CPLEX 22.1.0 as BARON's MILP solver. {(Note that BARON does not support Gurobi as an MILP solver.)}
Each BARON and Gurobi run is assigned a time limit of $2$ hours, with target relative and absolute optimality gaps of $10^{-4}$ and $10^{-9}$. 
(Alpine's definition of relative gap differs \textit{slightly} from those of BARON and Gurobi, as noted in Section~\ref{subsubsec:outline_experiments}.) 
All other settings in BARON and Gurobi were kept at default.

\subsubsection{{Evaluation metrics}}

We use the following metrics to evaluate the performance of BARON and Gurobi:
% {We benchmark the difficulty of our test instances using the global solvers BARON and Gurobi with the following metrics:} 
\begin{enumerate}[label=\roman*.]
\item \textbf{Solution times:} shifted geometric mean (GM), median, minimum, and maximum solution times.
\item \textbf{Number of iterations or B\&B nodes:} GM and median number of iterations or B\&B nodes explored.
\item \textbf{Number of instances solved:} the number of instances solved within the time limit and the GM of the residual optimality gap for the unsolved instances.
\end{enumerate}
The shifted geometric mean of a positive vector $t$ of solution times (in seconds) is defined as:
\[
\text{Shifted GM}(t) = \exp\Bigl(\frac{1}{N}\sum_{i=1}^{N} \ln(t_i + 10) \Bigr) - 10.
\]
The residual optimality gap for an unsolved instance is defined as:
\[
\textup{TLE Gap } = \frac{\text{UB} - \text{LB}}{10^{-6} + \abs{\text{UB}}},
\]
where $\text{UB}$ and $\text{LB}$ are the upper and lower bounds on the optimal value returned by the solver at termination.

\begin{table}
\centering
\caption{Statistics on BARON solution times,
including the shifted geometric mean, median, minimum, and maximum times over the subset of 1000 instances for which BARON does not hit the {2 hour} time limit.
{The sixth and seventh columns show the geometric mean and median of the total number of BARON iterations across the 1000 instances.}
The last two columns denote the number of instances for which BARON hits the time limit and the corresponding geometric mean of residual optimality gaps at termination, respectively.
}
\begin{tabular}{ r | c c c c | c c | c c }
\hline
Problem Family & \multicolumn{4}{c|}{BARON Solution Time (seconds)} & \multicolumn{2}{c|}{{BARON Iterations}} \\
 & Shifted GM & Median & Min & Max & {GM}  &  {Median} & \# TLE & TLE Gap (GM) \\ \hline
Bilinear, $n = 10$  &  0.2  &  0.2  &  0.1  &  0.4  &  1  &  1  &  0  & $\underline{\hspace{0.3cm}}$  \\
Bilinear, $n = 20$  &  3.8  &  3.9  &  1.3  &  7.6  &  9  &  9  &  0  & $\underline{\hspace{0.3cm}}$ \\
Bilinear, $n = 50$  &  312.4  &  281.4  &  57.7  &  5194.9  &  9934  &  9193  &  0  & $\underline{\hspace{0.3cm}}$ \\[0.1in]
QCQP, $n = 10$  &  0.4  &  0.4  &  0.1  &  0.7  &  1  &  1  &  0  & $\underline{\hspace{0.3cm}}$  \\
QCQP, $n = 20$  &  5.0  &  4.8  &  1.5  &  9.8  &  17  &  13  &  0  & $\underline{\hspace{0.3cm}}$ \\
QCQP, $n = 50$  &  1311.3  &  1741.6  &  16.3  &  7165.4 &  54691  &  73221  &  107  & $3.2 \times 10^{-2}$ \\[0.1in]
Pooling  &  553.5  &  647.2  &  17.0  &  7189.2  &  50954  &  263547  &  417  &  $2.4 \times 10^{-2}$ \\  \hline
\end{tabular}
\label{tab:baron_times}
\end{table}

\begin{table}
\centering
\caption{{Statistics on Gurobi solution times,
including the shifted geometric mean, median, minimum, and maximum times over the subset of 1000 instances for which Gurobi does not hit the 2 hour time limit.
The sixth and seventh columns show the geometric mean and median of the total number of nodes explored by Gurobi across the 1000 instances.
The last two columns denote the number of instances for which Gurobi hits the time limit and the corresponding geometric mean of residual optimality gaps at termination, respectively.}
}
\begin{tabular}{ r | c c c c | c c | c c }
\hline
Problem Family & \multicolumn{4}{c|}{Gurobi Solution Time (seconds)} & \multicolumn{2}{c|}{Gurobi B\&B Nodes} \\
 & Shifted GM & Median & Min & Max & GM & Median & \# TLE & TLE Gap (GM) \\ \hline
Bilinear, $n = 10$  &  0.03  &  0.03  &  0.01  &  0.13  &  1  &  1  &  0  & $\underline{\hspace{0.3cm}}$  \\
Bilinear, $n = 20$  &  0.7  &  0.7  &  0.3  &  1.3  &  2732  &  2687  &  0  & $\underline{\hspace{0.3cm}}$ \\
Bilinear, $n = 50$  &  7.2  &  7.1  &  2.6  &  18.3  &  17181  &  17263  &  0  & $\underline{\hspace{0.3cm}}$ \\[0.1in]
QCQP, $n = 10$  &  0.02  &  0.01  &  0.01  &  0.08  &  1  &  1  &  0  & $\underline{\hspace{0.3cm}}$  \\
QCQP, $n = 20$  &  0.6  &  0.6  &  0.3  &  1.1  &  2235  &  2389  &  0  & $\underline{\hspace{0.3cm}}$ \\
QCQP, $n = 50$  &  8.0  &  8.1  &  2.1  &  25.1  &  26108  &  27711  &  0  & $\underline{\hspace{0.3cm}}$ \\[0.1in]
Pooling  &  63.6  &  44.3  &  1.1  &  6367.1  &  833312  &  739876  &  10  &  $3 \times 10^{-4}$ \\  \hline
\end{tabular}
\label{tab:gurobi_times}
\end{table}

\subsubsection{Benchmarking using BARON}

Table~\ref{tab:baron_times} present statistics of BARON run times across the different QCQP families.
BARON solves the $10$-variable and $20$-variable random bilinear and QCQP instances within seconds.
However, it takes over $5$ minutes on average to solve the $50$-variable random bilinear instances and over $20$ minutes on average to solve the $50$-variable random QCQP instances. 
BARON also times out on $107$ out of $1000$ of the $50$-variable instances with univariate quadratic terms.
BARON finds the random pooling instances to be significantly harder, timing out on $417$ out of $1000$ instances and taking roughly $9$ minutes on average to solve the remaining $583$ out of $1000$ instances (while BARON finds global solutions, it is unable to prove global optimality within the time limit).
The last column in Table~\ref{tab:baron_times} indicates the GM of the relative optimality gap at termination for instances where BARON hits the time limit.
{Finally, the sixth and seventh columns note the GM and median of the total number of BARON iterations.
BARON requires significantly more iterations for the $50$-variable random bilinear and QCQP instances, as well as for the pooling instances, compared to the $10$-variable and $20$-variable instances.}

\subsubsection{{Benchmarking using Gurobi}}

{Table~\ref{tab:gurobi_times} present statistics of Gurobi run times across the different QCQP families.
Gurobi solves the $10$-variable and $20$-variable random bilinear and QCQP instances within a second and requires about $7$ seconds on average to solve the $50$-variable random bilinear and QCQP instances. 
However, Gurobi finds the random pooling instances to be relatively more challenging, timing out on $10$ out of $1000$ instances and taking about one minute on average to solve the remaining $990$ out of $1000$ instances.
The last column in Table~\ref{tab:gurobi_times} notes the GM of the relative optimality gap at termination for instances where Gurobi hits the time limit.
The sixth and seventh columns note the GM and median of the total number of B\&B nodes explored by Gurobi.
Despite requiring a significant number of nodes for convergence, particularly for the pooling instances, Gurobi solves our test instances much faster than BARON, potentially due to the use of cheaper relaxations, efficient parallelization and effective engineering.}

\subsection{Additional tables for the numerical experiments}
\label{subsec:omitted_tables}

Table~\ref{tab:ml_times} reports the total time taken by Algorithm~\ref{alg:ml_approx} to train the $dK \lvert \mathcal{NC} \rvert$ AdaBoost regression models for each QCQP family.
These training times are excluded from the Alpine+ML2 and Alpine+ML4 times reported in Section~\ref{subsec:results}, as they are typically incurred only once.
Once trained, these AdaBoost regression models can be used to predict partitioning points for use within Alpine's first iteration for all future test instances from the same QCQP family.

Table~\ref{tab:alpine_speedup} records the speedup or slowdown of Alpine with the strong partitioning and AdaBoost-based policies relative to default Alpine. 
Table~\ref{tab:maxmin_times} reports statistics on the time required to run Algorithm~\ref{alg:enhancements} to determine strong partitioning points for the different QCQP families.

\begin{table}[t]
\centering
\caption{{Total time taken by Algorithm~\ref{alg:ml_approx} to train all $dK \lvert \mathcal{NC} \rvert$ AdaBoost regression models. These times exclude the total time required to generate and sort the predictions of the AdaBoost regression models in Algorithm~\ref{alg:ml_approx}, which is typically less than $1\%$ (often much less) of the reported times.}}
\begin{tabular}{ r | c | c }
\hline
Problem Family & \hspace*{0.05in} $d$ \hspace*{0.05in} & Training Time (seconds) \\ \hline
Bilinear, $n = 10$ &  2  & 390  \\
Bilinear, $n = 20$ &  2  & 4063  \\
Bilinear, $n = 20$ &  4  & 5518  \\
Bilinear, $n = 50$ &  2  & 22825  \\[0.05in]
QCQP, $n = 10$ &  2  & 378  \\
QCQP, $n = 20$ &  2  & 3917  \\
QCQP, $n = 20$ &  4  & 6033  \\
QCQP, $n = 50$ &  2  & 14663  \\[0.05in]
Pooling &  2  & 91353  \\ \hline
\end{tabular}
\label{tab:ml_times}
\end{table}

\begin{table}[t]
\centering
\caption{
Statistics on the speedup or slowdown of Alpine with the strong partitioning and ML-based policies relative to default Alpine.
{The speedups for Alpine+SP2 and Alpine+SP4 do not account for the time spent in running Algorithm~\ref{alg:enhancements} to determine strong partitioning points.}
}
% \resizebox{\textwidth}{!}{%
\begin{tabular}{ c | c | c c c c c c c c }
\hline
Problem Family & Solution Method & \multicolumn{8}{c}{Speedup/Slowdown Factor} \\
& & $<0.5$ & $0.5 - 1$ & $1 - 2$ & $2 - 5$ & $5 - 10$ & $10 - 20$ & $20 - 50$ & $> 50$ \\ \hline
Bilinear, $n = 10$ & \% Alpine+SP2 inst.\  &  $\underline{\hspace{0.3cm}}$  &  $\underline{\hspace{0.3cm}}$  &  1.1  &  57.6  &  40.1  &  1.2  &  0  &  0 \\
& \% Alpine+ML2 inst.\  &  0.2  &  2.1  &  7.7  &  49.9  &  40.0  &  0.1  &  0  &  0 \\[0.1in]
& \% Alpine+SP2 inst.\  &  0.2  &  3.3  &  7.2  &  18.2  &  31.2  &  29.9  &  10.0  &  0.0 \\
Bilinear, $n = 20$ & \% Alpine+ML2 inst.\  &  3.3  &  9.8  &  25.5  &  39.2  &  15.3  &  6.0  &  0.9  &  0.0 \\
& \% Alpine+SP4 inst.\  &  0.2  &  0.7  &  1.3  &  13.4  &  32.7  &  37.1  &  14.5  &  0.1 \\
& \% Alpine+ML4 inst.\  &  2.8  &  10.5  &  23.3  &  41.4  &  15.2  &  5.9  &  0.9  &  0.0 \\[0.1in]
Bilinear, $n = 50$ & \% Alpine+SP2 inst.\  &  0.4  &  1.3  &  7.2  &  18.7  &  26.3  &  24.3  &  14.9  &  6.9 \\
& \% Alpine+ML2 inst.\  &  0.7  &  4.7  &  16.9  &  32.5  &  25.3  &  13.7  &  5.4  &  0.8 \\[0.1in]
QCQP, $n = 10$ & \% Alpine+SP2 inst.\  &  $\underline{\hspace{0.3cm}}$  &  $\underline{\hspace{0.3cm}}$  &  0.1  &  3.3  &  76.1  &  20.4  &  0.1  &  0 \\
& \% Alpine+ML2 inst.\  &  1.0  &  3.9  &  20.9  &  8.5  &  53.4  &  12.3  &  0  &  0 \\[0.1in]
& \% Alpine+SP2 inst.\  &  0.1  &  3.2  &  12.2  &  18.4  &  11.5  &  19.4  &  32.6  &  2.6 \\
QCQP, $n = 20$ & \% Alpine+ML2 inst.\  &  0.5  &  5.1  &  19.0  &  40.7  &  23.1  &  9.6  &  1.9  &  0.1 \\
& \% Alpine+SP4 inst.\  &  0  &  0.2  &  1.3  &  3.8  &  5.5  &  28.2  &  53.7  &  7.3 \\
& \% Alpine+ML4 inst.\  &  0  &  2.9  &  11.6  &  33.3  &  27.0  &  17.2  &  7.6  &  0.4 \\[0.1in]
QCQP, $n = 50$ & \% Alpine+SP2 inst.\  &  0.9  &  1.3  &  10.7  &  22.0  &  23.0  &  32.5  &  7.2  &  2.4 \\
& \% Alpine+ML2 inst.\  &  1.4  &  4.0  &  19.5  &  32.4  &  22.7  &  16.6  &  3.4  &  0 \\[0.1in]
Pooling & \% Alpine+SP2 inst.\  &  2.2  &  6.4  &  19.7  &  26.0  &  21.8  &  16.8  &  6.7  &  0.4 \\
& \% Alpine+ML2 inst.\  &  2.1  &  11.5  &  34.5  &  40.4  &  9.8  &  1.4  &  0.3  &  0 \\ \hline
\end{tabular}%
% }
\label{tab:alpine_speedup}
\end{table}

\begin{table}
\centering
\caption{
Statistics on the time required to run Algorithm~\ref{alg:enhancements} to determine strong partitioning points. 
Columns denote the shifted geometric mean, median, minimum, maximum, and standard deviation of these times.
}
% \resizebox{0.85\columnwidth}{!}{%
\begin{tabular}{ r | c | c c c c c }
\hline
Problem Family & Solution & \multicolumn{5}{c}{Max-Min Solution Time (seconds)} \\
& Method & Shifted GM & Median & Min & Max & Std.\ Deviation \\ \hline
Bilinear, $n = 10$  &  SP2  &  16  &  14  &  6  &  96  &  13  \\[0.05in]
Bilinear, $n = 20$ &  SP2  &  528  &  445  &  136  &  2389  &  544  \\
&  SP4  &  1244  &  1117  &  374  &  4360  &  893  \\[0.05in]
Bilinear, $n = 50$  &  SP2  &  7070  &  7404  &  1271  &  23166  &  3268  \\[0.1in]
QCQP, $n = 10$  &  SP2  &  8  &  8  &  6  &  53  &  3  \\[0.05in]
QCQP, $n = 20$ &  SP2  &  1731  &  1826  &  171  &  4244  &  654  \\
&  SP4  &  2152  &  2740  &  471  &  5965  &  961  \\[0.05in]
QCQP, $n = 50$  &  SP2  &  16964  &  17074  &  8626  &  23551  &  2319  \\[0.05in]
Pooling  &  SP2  &  15658  &  15148  &  1088  &  77029  &  8657  \\ \hline
\end{tabular}
% }
\label{tab:maxmin_times}
\end{table}

\end{APPENDICES}

\end{document}